\tikzset{
	symbol/.style={
		draw=none,
		every to/.append style={
			edge node={node [sloped, allow upside down, auto=false]{$#1$}}}
}}
\tikzset{>=stealth'}
\def\arrowLengthDisplayStyle{4ex}
\def\arrowHeightDisplayStyle{.8ex}
\def\arrowSkipDisplayStyle{.5ex}
\def\arrowLengthTextStyle{3ex}
\def\arrowHeightTextStyle{.8ex}
\def\arrowSkipTextStyle{.4ex}
\def\arrowLengthScriptStyle{2.5ex}
\def\arrowHeightScriptStyle{.6ex}
\def\arrowSkipScriptStyle{.3ex}
\def\arrowLengthScriptScriptStyle{2ex}
\def\arrowHeightScriptScriptStyle{.4ex}
\def\arrowSkipScriptScriptStyle{.2ex}
\renewcommand{\to}{\arrow{->}}
\newcommand{\mono}{\arrow{>->}}
\newcommand{\epi}{\arrow{->>}}
\newcommand{\embed}{\arrow{right hook->}}
\renewcommand{\mapsto}{\arrow{|->}}
\newcommand{\MakeTikzArrowWithSuperscriptSubscript}[4]
{
	\mathchoice
	{ 
		\hspace*{\arrowSkipDisplayStyle}
		\begin{tikzpicture}[baseline]
		\draw [#1] (0,\arrowHeightDisplayStyle) -- node [above] {$#2$} node [below] {$#3$} (#4 * \arrowLengthDisplayStyle, \arrowHeightDisplayStyle);
		\end{tikzpicture}
		\hspace*{\arrowSkipDisplayStyle}
	}
	{ 
		\hspace*{\arrowSkipTextStyle}
		\begin{tikzpicture}[baseline]
		\draw [#1] (0,\arrowHeightTextStyle) -- node [above] {$\scriptstyle #2$} node [below] {$\scriptstyle #3$} (#4 * \arrowLengthTextStyle, \arrowHeightTextStyle);
		\end{tikzpicture}
		\hspace*{\arrowSkipTextStyle}
	}
	{ 
		\hspace*{\arrowSkipScriptStyle}
		\begin{tikzpicture}[baseline]
		\draw [#1] (0,\arrowHeightScriptStyle) -- node [above] {$\scriptscriptstyle #2$} node [below] {$\scriptscriptstyle #3$} (#4 * \arrowLengthScriptStyle, \arrowHeightScriptStyle);
		\end{tikzpicture}
		\hspace*{\arrowSkipScriptStyle}
	}
	{ 
		\hspace*{\arrowSkipScriptScriptStyle}
		\begin{tikzpicture}[baseline]
		\draw [#1] (0,\arrowHeightScriptScriptStyle) -- node [above] {$\scriptscriptstyle #2$} node [below] {$\scriptscriptstyle #3$} (#4 * \arrowLengthScriptScriptStyle, \arrowHeightScriptScriptStyle);
		\end{tikzpicture}
		\hspace*{\arrowSkipScriptScriptStyle}
	}
}
\newcommand{\MakeTikzArrowWithCentralLabel}[3]
{
	\mathchoice
	{ 
		\hspace*{\arrowSkipDisplayStyle}
		\begin{tikzpicture}[baseline]
		\draw [#1] (0,\arrowHeightDisplayStyle) -- node [fill=white,inner sep=1pt] {$#2$} (#3 * \arrowLengthDisplayStyle, \arrowHeightDisplayStyle);
		\end{tikzpicture}
		\hspace*{\arrowSkipDisplayStyle}
	}
	{ 
		\hspace*{\arrowSkipTextStyle}
		\begin{tikzpicture}[baseline]
		\draw [#1] (0,\arrowHeightTextStyle) -- node [fill=white,inner sep=1pt] {$\scriptstyle #2$} (#3 * \arrowLengthTextStyle, \arrowHeightTextStyle);
		\end{tikzpicture}
		\hspace*{\arrowSkipTextStyle}
	}
	{ 
		\hspace*{\arrowSkipScriptStyle}
		\begin{tikzpicture}[baseline]
		\draw [#1] (0,\arrowHeightScriptStyle) -- node [fill=white,inner sep=1pt] {$\scriptscriptstyle #2$} (#3 * \arrowLengthScriptStyle, \arrowHeightScriptStyle);
		\end{tikzpicture}
		\hspace*{\arrowSkipScriptStyle}
	}
	{ 
		\hspace*{\arrowSkipScriptScriptStyle}
		\begin{tikzpicture}[baseline]
		\draw [#1] (0,\arrowHeightScriptScriptStyle) -- node [fill=white,inner sep=1pt] {$\scriptscriptstyle #2$} (#3 * \arrowLengthScriptScriptStyle, \arrowHeightScriptScriptStyle);
		\end{tikzpicture}
		\hspace*{\arrowSkipScriptScriptStyle}
	}
}
\def\arrow#1{\def\lastArrowStyle{#1}
	\futurelet\testchar\arrowMaybeStreched}
\def\arrowMaybeStreched{\ifx[\testchar \let\next\arrowStreched
	\else \let\next\arrowUnstreched \fi
	\next}
\def\arrowStreched[#1]{\def\lastArrowStrech{#1}
	\futurelet\testchar\arrowMaybeLabel}
\def\arrowUnstreched{\def\lastArrowStrech{1}
	\futurelet\testchar\arrowMaybeLabel}
\def\arrowMaybeLabel{\ifx^\testchar \let\next\arrowSuperscript
	\else \ifx_\testchar \let\next\arrowSubscript
	\else \ifx~\testchar \let\next\arrowCentralLabel
	\else \let\next\arrowNoLabel
	\fi
	\fi
	\fi
	\next}
\def\arrowSuperscript^#1{\def\lastArrowSuperscript{#1}
	\futurelet\testchar\arrowSuperMaybeSub}
\def\arrowSuperMaybeSub{\ifx_\testchar \let\next\arrowSuperscriptSubscript
	\else \let\next\arrowSuperscriptNoSubscript \fi
	\next}
\def\arrowSubscript_#1{\def\lastArrowSubscript{#1}
	\futurelet\testchar\arrowSubMaybeSuper}
\def\arrowSubMaybeSuper{\ifx^\testchar \let\next\arrowSubscriptSuperscript
	\else \let\next\arrowSubscriptNoSuperscript \fi
	\next}
\def\arrowSuperscriptSubscript_#1{\def\lastArrowSubscript{#1}
	\arrowDrawSupSub}
\def\arrowSuperscriptNoSubscript{\def\lastArrowSubscript{}
	\arrowDrawSupSub}
\def\arrowSubscriptSuperscript^#1{\def\lastArrowSuperscript{#1}
	\arrowDrawSupSub}
\def\arrowSubscriptNoSuperscript{\def\lastArrowSuperscript{}
	\arrowDrawSupSub}
\def\arrowNoLabel{\def\lastArrowSuperscript{}
	\def\lastArrowSubscript{}
	\arrowDrawSupSub}
\def\arrowCentralLabel~#1{\MakeTikzArrowWithCentralLabel{\lastArrowStyle}{#1}{\lastArrowStrech}}
\def\arrowDrawSupSub{\MakeTikzArrowWithSuperscriptSubscript{\lastArrowStyle}{\lastArrowSuperscript}{\lastArrowSubscript}{\lastArrowStrech}}
\newtheorem{theorem}{Theorem}[section]
\newtheorem{lemma}[theorem]{Lemma}
\newtheorem{corollary}[theorem]{Corollary}
\newtheorem{proposition}[theorem]{Proposition}
\theoremstyle{definition}
\newtheorem{definition}[theorem]{Definition}
\newtheorem{remark}[theorem]{Remark}
\newtheorem{setup}[theorem]{Setup}
\newtheorem{proposition-definition}[theorem]{Proposition-Definition}
\newtheorem*{rep@theorem}{\rep@title}
\newcommand{\newreptheorem}[2]{%
	\newenvironment{rep#1}[1]{%
		\def\rep@title{#2 \ref{##1}}%
		\begin{rep@theorem}}%
		{\end{rep@theorem}}}
\mathchardef\mhyphen="2D 
\newlist{subenumerate}{enumerate}{1}
\setlist[subenumerate,1]{label=(\arabic*)}
\newcommand\defeq{\stackrel{\mathrm{\mbox{\scriptsize{def}}}}{\,=\,}}
\newcommand{\sth}{\,\,\vert\,\,}
\newcommand{\Z}{\mathbb{Z}}
\DeclareMathOperator{\ind}{ind}
\DeclareMathOperator{\add}{add}
\DeclareMathOperator{\thick}{thick}
\DeclareMathOperator{\Hom}{Hom}
\DeclareMathOperator{\Ext}{Ext}
\DeclareMathOperator{\End}{End}
\DeclareMathOperator{\gen}{gen}
\DeclareMathOperator{\Der}{\mathcal{D}}
\DeclareMathOperator{\Derb}{\mathcal{D}^{\rm b}}
\DeclareMathOperator{\per}{per}
\DeclareMathOperator{\fgMod}{\mathrm{mod}}
\DeclareMathOperator{\Kb}{\mathcal{K}^b}
\DeclareMathOperator{\proj}{proj}
\newcommand{\T}{\mathcal{T}}
\newcommand{\A}{\mathcal{A}}
\newcommand{\p}{\mathcal{P}}
\newcommand{\z}{\mathcal{Z}}
\newcommand{\x}{\mathcal{X}}
\newcommand{\s}{\mathcal{S}}
\renewcommand{\c}{\mathcal{C}}
\renewcommand{\d}{\mathcal{D}}
\newcommand{\U}{\mathcal{U}}
\DeclareMathOperator{\tcmc}{\mathfrak{W}}
\renewcommand{\c}{\mathcal{C}}
\DeclareMathOperator{\Loc}{Loc}
\newcommand{\presilt}{\mathrm{presilt}}
\newcommand{\silt}{\mathrm{silt}}
\newcommand{\npresilt}[2]{#1_{#2}\mhyphen\mathrm{presilt}}
\newcommand{\nsilt}[2]{#1_{#2}\mhyphen\mathrm{silt}}
\DeclareMathOperator{\wide}{wide}
\DeclareMathOperator{\stautilt}{s\tau\mhyphen tilt}
\DeclareMathOperator{\stautiltpair}{s\tau\mhyphen tilt{\,}pair}
\DeclareMathOperator{\staurigidpair}{\tau\mhyphen rigid{\,}pair}
\DeclareMathOperator{\ftors}{f\mhyphen tors}
\DeclareMathOperator{\K}{K}
\newcommand{\derotimes}[1]{\otimes_{#1}^{\mathbf{L}}}
\newcommand{\w}{\mathcal{W}}
\DeclareMathOperator{\texact}{t\mhyphen exact}
\title[Two-term silting and $\tau$-cluster morphism categories]{Two-term silting and $\tau$-cluster morphism categories}
\author{Erlend D. Børve}
\keywords{Wide subcategory, t-structure, silting, $\tau$-tilting, $\tau$-cluster morphism category}
\subjclass[2020]{16G10, 18E10, 18E30} 
\address{Department of Mathematical Sciences, NTNU, NO-7491 Trondheim, Norway (Current address: Abteilung Mathematik, Department Mathematik/Informatik der Universität zu Köln, Weyertal 86--90, 50931 Cologne, Germany).}
\email{erlend.d.borve@gmail.com} 
\begin{document}

		\maketitle
		
			\begin{abstract}
			We generalise $\tau$-cluster morphism categories to {the setting of} non-positive dg algebras {with finite dimensional cohomology in all degrees}.
			The compatibility of silting reduction with support $\tau$-tilting reduction will be an essential ingredient when linking our definition to {those} of Buan--Marsh {and Buan--Hanson}.
		\end{abstract}
		
		\textbf{Acknowledgements. }
		I would like to express my gratitude to my PhD supervisor, Aslak B. Buan, for introducing me to the topic, countless helpful discussions, and meticulous proofreading. I am also grateful to Eric J. Hanson, {Yann Palu, and an anonymous referee} for very helpful comments on previous versions of this manuscript. {I would also like to thank Gustavo Jasso for useful conversations.}
		
		\textbf{Statements and Declarations. }
		A previous version of this manuscript is part of the author's PhD thesis. The author was funded directly by his employer, NTNU.
	
	\tableofcontents
	
	\newpage
	
	\section{Introduction}
	
	In tilting theory, it is valuable to find all tilting modules over a fixed finite dimensional algebra ${\Lambda}$, or better still, all tilting objects in the bounded homotopy category $\Kb(\proj {\Lambda})$, where $\proj {\Lambda}$ is the category of finitely generated projective $A$-modules. If all tilting objects in $\Kb(\proj {\Lambda})$ could be identified, one could construct all finite dimensional algebras ${\Gamma}$ that are derived Morita equivalent to ${\Lambda}$ \cite{Ric89}, which is to say that their derived categories are equivalent as triangulated categories. Many invariants of finite dimensional algebras are preserved by derived Morita equivalence, and it might be more efficient to compute these for some ${\Gamma}$ rather than for ${\Lambda}$ directly.
	
	Finding all tilting objects turns out to be an unrealistic project in general. However, one could instead study the larger class of silting objects, for which there is  a mutation process generating new examples in a reliable manner \cite{AI12}. In addition, one has a reduction procedure for silting objects. Iyama--Yang show that if $P$ is a presilting object of $\Kb(\proj {\Lambda})$, then the (pre)silting objects in $\Kb(\proj {\Lambda})$ having $P$ as a direct summand {bijectively} correspond to the (pre)silting objects in the Verdier quotient $\Kb(\proj {\Lambda})/\thick(P)$ \cite[Theorem~3.7]{IY18}. 
	The class of two-term silting objects closely resembles that of tilting modules. Indeed, a generalisation of the Brenner--Butler Theorem applies \cite[Theorem~1.1]{BZ16} \cite[Theorem~2.1]{BZ21}. Also, when performing silting reduction with respect to a two-term presilting object, one can restrict Iyama--Yang's bijection to a bijection of two-term (pre)silting objects \cite[Proposition~4.11]{Jas15}. 
	
	The mutation of two-term silting objects is closely tied to cluster combinatorics. For path algebras of a quiver of simply laced Dynkin type, the two-term silting objects of the bounded homotopy category are in bijection with the cluster-tilting objects in the cluster category \cite[Theorem~4.1]{AIR14}. Moreover, this bijection respects the mutation processes for cluster-tilting objects \cite[Corollary~4.8]{AIR14}. Cluster-tilting objects are in turn in bijection with the clusters of the corresponding cluster algebra \cite[Corollary~4.4]{BMRRT06}.
	
	One may also model cluster combinatorics using $\tau$-tilting theory \cite{AIR14}. Adachi--Iyama--Reiten define support $\tau$-tilting modules and $\tau$-rigid pairs, and show that they correspond bijectively with {two-term} silting objects and {two-term} presilting objects, respectively \cite[Theorem~3.2]{AIR14}. The reduction procedure of Iyama--Yang is transferable to $\tau$-tilting theory. Indeed, given any finite dimensional algebra, Jasso provides a suitable bijection between classes of support $\tau$-tilting modules \cite[Theorem {3.16}]{Jas15}, and shows that it is compatible with Iyama--Yang's silting reduction \cite[Theorem~4.12(b)]{Jas15}. Buan--Marsh later extended Jasso's bijection to $\tau$-rigid pairs \cite[Theorem~3.6]{BM18w}.
	
	The bijection of Buan--Marsh was constructed in order to define the $\tau$-cluster morphism category of a $\tau$-tilting finite algebra. This builds on Igusa--Todorov's cluster morphism category, {which is} defined for representation finite hereditary algebras \cite[Section~1]{IT17}, as well as a categorification of non-crossing partitions \cite{Igu14}.
	The definition of $\tau$-cluster morphism categories has recently been extended to all finite dimensional algebras \cite{BH21}. Although Buan--Marsh define the $\tau$-cluster morphism category in terms of $\tau$-tilting theory, they occasionally translate the setting to two-term silting in order to prove necessary results.
	
	$\tau$-cluster morphism categories are useful in the study of picture groups and picture spaces, defined by Igusa--Todorov--Weyman \cite{ITW16}. In the case of representation finite hereditary algebras, we have that the geometric realisation of the $\tau$-cluster morphism category is a $K(\pi,1)$ for the corresponding picture group \cite[Theorem~3.1]{IT17}. This result was later extended to include Nakayama algebras \cite[Theorem~4.16]{HI21}.
	
	\textbf{Outline.}
	\Cref{sec:prelims} sets up the notation and covers the necessary preliminaries. We review how t-exact subcategories (see \Cref{def:texact}) of a triangulated category with a bounded t-structure correspond with wide subcategories in the heart (\Cref{ZC17.2.5}), the interaction between two-term silting and $\tau$-tilting theory, and $\tau$-tilting reduction. In particular, we recall in \Cref{Jas15.3.6} that a $\tau$-rigid pair $(M,Q)$ in $\fgMod({\Lambda})$, {where $\Lambda$ is a finite dimensional algebra,} gives rise to a wide subcategory $J(M,Q)\defeq M^{\perp}\cap {^{\perp}(\tau M)}\cap Q^{\perp}$ of $\fgMod({\Lambda})$.
	
	In \Cref{sec:silttstr}, we work over a non-positive dg algebra $A$ {such that $H^iA$ is finite dimensional for all $i\in \Z$}. Let $\Der_{\rm fd}(A)$ be the subcategory of the derived category $\Der(A)$ spanned by complexes of finite dimensional total cohomology. In \Cref{lem:perptexsummand}{\eqref{lem:perptexsummand2}, we show that a two-term presilting object $P\in \per(A)$ determines a t-exact subcategory $$P^{\perp_{\Z}}_{\rm fd}\defeq \{X\in \Der_{\rm fd}(A) \sth \Hom_{ \Der(A)}(P,\Sigma^i X)=0 \,\, \forall i\in \Z \}$$ of $\Der_{\rm fd}(A)$}. By \Cref{prop:H0J}{\eqref{item:H0J1}}, the {subcategory} $P^{\perp_{\Z}}_{{\rm fd}}{\cap \fgMod H^0A \subseteq \fgMod H^0A}$ is {equal to} the $J(M,Q)$, where $(M,Q)$ is the $\tau$-rigid pair corresponding to $P$ {(see \Cref{IJY14.4.6})}.
	
	\Cref{sec:comp} presents a proof that Iyama--Yang's silting reduction is compatible with Buan--Marsh's support $\tau$-tilting reduction, generalising Jasso's compatibility result \cite[Theorem 4.12(b)]{Jas15}. 	
	\begin{reptheorem}{th:Jas15.4.12}
		Let $\c$ be a {Hom-finite Krull--Schmidt} triangulated category and ${S}$ be a silting object {in $\c$}. 
		Consider a $2_{ S}$-term presilting object $P$, let ${\Lambda}=\End_{\c}({ S})$, {let $T^{+}_P$ be the Bongartz completion of $P$ (defined by the triangle \eqref{eq:Bon-tri}), let $\z_P={^{\perp_{>0}}P}\cap {P^{\perp_{>0}}}$}, and let ${\Gamma}$ be the $\tau$-tilting reduction of ${\Lambda}$ with respect to the $\tau$-rigid pair {in $\fgMod({\Lambda})$} corresponding to $P$. We have a commutative diagram of bijections
		\begin{center}
			\begin{tikzcd}
				\npresilt{2}{ S}_{{ P}}(\c) \arrow[r,"H_{{ S}}"]\arrow[d,"\varphi_{ P}"] & \staurigidpair_{H_{ S}({ P})}({\Lambda})\arrow[d,"\psi_{H_{ S}({ P})}"] \\
				\npresilt{2}{ T^{+}_P}(\z_{ P}/[{ P}]) \arrow[r,"H_{ T^{+}_P}"] & \staurigidpair({\Gamma}) 
			\end{tikzcd}
		\end{center}
		where $\varphi_P$ is the Iyama--Yang bijection and $\psi_{H_{ S}({ P})}$ is the Buan--Marsh bijection. The horizontal maps are the {bijections} between two-term presilting objects and $\tau$-rigid pairs (see \Cref{IJY14.4.6}).
	\end{reptheorem}
	
	Our main result appears in \Cref{section:tcmc}. Given a non-positive dg algebra $A$ {such that $H^iA$ is finite dimensional for all $i\in \Z$}, we construct a category $\tcmc_{A}$, which we call the \textit{$\tau$-cluster morphism category} of $A$. Developing the theory purely in terms of {two-term} silting, as opposed to $\tau$-tilting, has a major advantage; since silting reduction is induced by a functor, it is easier to prove that the composition law in $\tcmc_{A}$ is associative, as we do in \Cref{thm:associative}.
	When we claim that the work of Buan--Marsh and Buan--Hanson has been generalised, it is in the following specific sense.
	
	\begin{reptheorem}{thm:sameasBM}
		Let $A$ be a non-positive dg algebra {such that $H^iA$ is finite dimensional for all $i\in \Z$}, and let $\tcmc^{\rm BM}_{H^0A}$ be the $\tau$-cluster morphism category (as defined by Buan--Hanson \cite{BH21}) of the zeroth cohomology of $A$. 
		{There is an} equivalence of categories
		\begin{equation*}
		{E\colon }\tcmc_{A}\to \tcmc^{\rm BM}_{H^0A}.
		\end{equation*}
		In particular, if $A$ is a finite dimensional algebra, then $\tcmc_{A}$ is equivalent to $\tcmc_{A}^{\rm BM}$.
	\end{reptheorem}

	\section{Notation and preliminaries}\label{sec:prelims}
	
	Throughout, all subcategories {we consider} will be full and closed under isomorphism. We fix a field $k$, and declare all triangulated categories and their (not necessarily triangulated) subcategories to be $k$-linear. {A $k$-linear category $\c$ is \textit{Hom-finite} if the Hom-spaces $\c({ X},{ X}')$ (alternatively denoted $\Hom_{\c}({ X},{ X}')$) are finite dimensional for all $X,X'\in \c$. It is \textit{Krull--Schmidt} if any object ${ X}\in\c$ is isomorphic to a finite direct sum $\bigoplus_{i=1}^n { X}_i$, where the endomorphism $k$-algebra of each ${ X}_i$ is a local. An object in a Krull--Schmidt $k$-linear category with a local endomorphism $k$-algebra is called \textit{indecomposable}.}
	
	Recall that a triangulated subcategory is \textit{thick} if it is closed under direct summands.
		For a triangulated category $\T$, we write $\thick_\T(\x)$ for the smallest thick subcategory containing the subcategory $\x$, or simply $\thick(\x)$ when there is no risk of confusion.
	
	All modules are right modules, unless otherwise specified. For a $k$-algebra ${\Lambda}$, the category of finitely generated right ${\Lambda}$-modules will be denoted by $\fgMod {\Lambda}$, and the category of {finitely generated} projective {right} $R$-modules by $\proj {\Lambda}$. The triangulated category $\Der_{{\rm fd}}(\fgMod {\Lambda})$ is the bounded derived category of the former, whereas $\Kb(\proj {\Lambda})$ is the bounded homotopy category of the latter. {They are both subcategories of the derived category $\Der({\Lambda})$. More generally, if $A$ is a differential graded (henceforth \textit{dg}) $k$-algebra $A$, we denote its derived category by $\Der(A)$, define $\per(A)$ to be the thick closure of $A$ in $\Der(A)$, and $\Der_{\rm fd}{(A)}$ to be the (thick) subcategory {of $\Der(A)$} spanned by complexes {with} finite dimensional total cohomology.} {A dg $k$-algebra $A$ is said to be \textit{non-positive} if the cohomology $k$-vector spaces $H^iA$ vanish for all $i>0$.}
	
	Let $\mathcal{C}$ be a $k${-}linear category and let $\mathcal{X}\subseteq\mathcal{C}$ be a full subcategory. The \textit{additive closure} of $\mathcal{X}$ is the full subcategory of $\mathcal{C}$ {consisting of} all finite direct sums of direct summands of objects in $\mathcal{X}$. We denote this category by $\add(\mathcal{X})$, or by $\add(X)$ if $\mathcal{X}$ contains {only} a single object $X$.
	
	Let $\mathcal{P}$ be an additive subcategory of $\mathcal{C}$. A morphism $Q\to^{\beta}Y$ (resp. $X\to^{\alpha}Q$) is \textit{$\mathcal{P}$-epic} (resp. \textit{$\mathcal{P}$-monic}) if the induced morphism ${\mathcal{C}}(P,\beta)$ (resp. ${\mathcal{C}}(\alpha,P)$) is surjective for all objects $P\in\mathcal{P}$. It is a \textit{right $\mathcal{P}$-approximation} of $Y$ (resp. \textit{left $\mathcal{P}$-approximation} of $X$) if, in addition, we have that $Q\in\mathcal{P}$. We say that $\mathcal{P}$ is a \textit{contravariantly finite subcategory} (resp. \textit{covariantly finite subcategory}) of $\mathcal{C}$ if every object in $\mathcal{C}$ admits a right (resp. left) $\mathcal{P}$-approximation. A \textit{functorially finite subcategory} of $\mathcal{C}$ is one that is both contravariantly and covariantly finite.
	
	The \textit{left} and \textit{right perpendicular subcategories} of $\p$ in $\c$ are the full subcategories
	\begin{align*}
	{^{\perp}\p}&\defeq\{X\in\c\sth {\c}(X,\p)=0\}, \\
	{\p^{\perp}}&\defeq\{Y\in\c\sth {\c}(\p,Y)=0\},
	\end{align*}
	{respectively. If $\p=\add(P)$ for some object $P\in\c$, we write ${^{\perp}P}\defeq {^{\perp}(\add(P))}$ and ${P^{\perp}}\defeq (\add(P))^{\perp}$, respectively.}
	
	If $\T$ is a triangulated category, we denote its suspension functor by $\Sigma$, unless otherwise specified. {Let $\p$ be an additive subcategory of $\T$.} For each subset $I\subseteq \Z$, we define the \textit{{left and right} perpendicular subcategories}
	\begin{align}
	{^{\perp_I}\p}&\defeq\{X\in\T\sth {\T}(X,\Sigma^i\p)=0\quad \forall i\in I\}, \nonumber \\
	{\p^{\perp_I}}&\defeq\{Y\in\T\sth {\T}(\p,\Sigma^i Y)=0\quad \forall i\in I\},  \label{eq:perpr}
	\end{align}
	{respectively. If $\p=\add(P)$ for some object $P\in\c$, we write ${^{\perp_I}P}\defeq {^{\perp_I}(\add(P))}$ and ${P^{\perp_I}}\defeq (\add(P))^{\perp_I}$, respectively. In the case where $I$ is the subset of $\Z$ consisting of positive (resp. non-positive, resp. non-zero) integers, we replace the subscript $I$ by $>0$ (resp. $\leq 0$, resp. $\neq 0$), which for example introduces the notation $\p^{\perp_{>0}}$ (resp. $\p^{\perp_{\leq 0}}$, resp. $\p^{\perp_{\neq 0}}$) for the right perpendicular subcategory in question.}
	
	Suppose that $\T$ is triangulated and let $\p_1$ and $\p_2$ be additive subcategories of $\T$. The \textit{category of extensions {of $\p_2$ in $\p_1$}} {(also known as the \textit{Verdier product} of $\p_1$ and $\p_2$)} is the full subcategory $\p_1*\p_2$ of $\T$ {consisting of} the objects $E$ that fit in a triangle 
	\begin{center}
		\begin{tikzcd} X_1 \arrow[r,""]& E\arrow[r]& X_2 \arrow[r]& \Sigma X_1,  \end{tikzcd}
	\end{center}
	where $X_1\in\p_1$ and $X_2\in\p_2$. It can be shown using the octahedral axiom that $*$ is an associative operation \cite[Lemme~1.3.10]{BBD82}. 
	A full subcategory $\p\subseteq\T$ is \textit{extension-closed in $\T$}  if $\p=\p*\p$. {One also has a notion of extension-closure for subcategories of Quillen exact categories (e.g. abelian categories), where exact sequences play the role of triangles.}
	
	An additive subcategory $\mathcal{P}\subseteq \T$ is \textit{suspended} (resp. \textit{co-suspended}) if it is extension-closed and {closed under} the suspension functor $\Sigma$ (resp. inverse {suspension functor} $\Sigma^{-1}$). 
	The ideal $[\p]$ in $\c$ contains precisely the morphisms that factor through an object in $\p$. The ideal quotient will be denoted by ${\c \over [\p]}$. If $\p=\add(P)$ for some object $P$, we denote this quotient by ${\c \over [P]}$.
	
	We use Deligne's convention, writing $F=G$ when we mean that these functors are naturally isomorphic. 
	
	\subsection{t-structures and wide subcategories of the heart}
	Truncation structures (t-structures for short) were introduced by Beilinson--Bernstein--Deligne \cite{BBD82} (and also Gabber \cite{BBDG18}). We give a quick survey of their elementary properties. A triangulated category $\d$ {with suspension functor $\Sigma$} is fixed.
	
	\begin{definition}\label{def:t-str}
		A pair of full additive subcategories $(\d^{\leq 0},\d^{> 0})$ of $\d$ constitute a \textit{truncation structure} (henceforth \textit{t-structure}) if the conditions \ref{t1} and \ref{t2} below are met.
		\begin{enumerate}[label=(t.\arabic*)]
			\item\label{t1} $(\d^{\leq 0},\d^{> 0})$ is a \textit{torsion pair} in $\d$, i.e. 
			\begin{enumerate}[label=(t.1.\arabic*)]
				\item\label{t11} The Hom-spaces $\d(X,Y)$ are trivial for all $X\in \d^{\leq 0}$ and $Y\in \d^{> 0}$,
				\item\label{t12} every object in $\d$ is an extension of an object in $\d^{> 0}$ with an object in $\d^{\leq 0}$. This is to say that $\d^{\leq 0}*\d^{>0}=\d$.
			\end{enumerate}
			\item\label{t2} The subcategory $\d^{\leq 0}$ is suspended (equivalently, the subcategory $\d^{> 0}$ is co-suspended).
		\end{enumerate}
		We define $\d^{\leq -1}\defeq \Sigma^{}\d^{\leq 0}$. More generally, we define
		\begin{align*}
		\d^{\leq n}&\defeq \Sigma^{-n}\d^{\leq 0}, \\
		\d^{> m}&\defeq \Sigma^{-m}\d^{> 0}, \\ \d^{[m,n]}&\defeq \d^{> m-1}\cap\d^{\leq n}, \\ \text{and } \d^{m}&\defeq \d^{[m,m]},
		\end{align*}
		for integers $n$ and $m$. The t-structure $(\d^{\leq 0},\d^{> 0})$ is \textit{bounded} if $\bigcup\limits_{m,n\in\Z}\d^{[m,n]}=\d$. The subcategory $\d^0$ of $\d$ is called the \textit{heart} of the t-structure.
	\end{definition}
	
	\begin{remark}\label{rem:closedunder}
		The subcategories $\d^{\leq 0}$ and $\d^{>0}$ determine each other; we have that $(\d^{\leq 0})^{\perp_0} = \d^{>0}$ and that ${^{\perp_0}(\d^{> 0})} = \d^{\leq 0}$. As a result of this fact, the subcategories $\d^{\leq 0}$ and $\d^{>0}$ are closed under taking direct summands. 
	\end{remark}
	
	Let $\A$ be an abelian category. Then the derived category {$\Der(\A)$} can be equipped with the \textit{standard {t-}structure}:
	\begin{align*}
	\Der^{\leq 0}(\A)  &=  \{X\in\Der(\A)\sth \Hom_{\Der(A)}(T,X)=0 \text{ for all } i>0\}, \\
	\Der^{> 0}(\A)  &=  \{X\in\Der(\A)\sth \Hom_{\Der(A)}(T,X)=0 \text{ for all } i\leq 0\}.
	\end{align*}
	{It restricts to a bounded t-structure on $\Der_{\rm fd}(\A)$, also called the \textit{standard {t-}structure}.}
	The heart {of these t-structures are both} equivalent to the abelian category $\A$. In general, the heart of any t-structure is an abelian category {\cite[Théorème 1.3.6]{BBD82}}.
		
	For a suspended subcategory $\mathcal{U}\subseteq \d$, the pair $(\mathcal{U},\mathcal{U}^{\perp})$ forms a t-structure on $\d$ if and only if $\U$ is \textit{co-reflective} \cite[1.1 Proposition]{KV88}, i.e. the inclusion admits a right adjoint.
	A subcategory of $\d$ is called an \textit{aisle} (resp. a \textit{co-aisle}) if it is suspended (resp. co-suspended) and co-reflective (resp. \textit{reflective}, i.e. the inclusion admits a left adjoint).
	Since the pair $(\d^{\leq 0},\d^{>0})$ is a t-structure on $\d$ precisely when $\d^{\leq 0}$ is an aisle in $\d$ (or, equivalently $\d^{> 0}$ is a co-aisle), we say that $\d^{\leq 0}$ is the \textit{aisle of the t-structure} (and that $\d^{> 0}$ is the \textit{co-aisle}). 

	We may hence define a \textit{truncation functor} $\sigma^{\leq 0}\colon{\d}\to {\d^{\leq 0}}$ (resp. $\sigma^{> 0}\colon{\d}\to {\d^{> 0}}$) as the right adjoint of the inclusion functor $\iota^{\leq 0}\colon{\d^{\leq 0}}\to {\d}{}$ (resp. as the left adjoint of the inclusion functor $\iota^{> 0}\colon{\d^{> 0}}\to{\d}{}$). {More generally, one defines truncation functors $\sigma^{\leq i}\colon{\d}\to {\d^{\leq i}}$ (resp. $\sigma^{> i}\colon{\d}\to {\d^{> i}}$) for any integer $i$, \textit{mutatis mutandis}. {The \textit{cohomology functor in degree 0} of the t-structure is defined by ${\sigma}^0\defeq \sigma^{\leq 0}\sigma^{>-1}$}. For any integer $i$, we define the \textit{cohomology functor} in degree $i$ by ${\sigma}^i\defeq {\sigma}^0\circ \Sigma^i$.}
	
	\begin{remark}
		It is conventional to denote the truncation functors by $\tau^{\leq 0}$ and $\tau^{>0}$. We have chosen to use $\sigma^{\leq 0}$ and $\sigma^{>0}$ instead, so that they will not be confused with the Auslander--Reiten translation.
	\end{remark}
	
	For all objects $X\in\d$, there is a unique morphism ${\sigma^{> 0}X}\to^{\partial_X}\Sigma {\sigma^{\leq 0} X}{}$ such that we have a triangle 
	\begin{equation}\label{eq:trunca}
		\begin{tikzcd}
			\sigma^{\leq 0}X\arrow[r,"\varepsilon_X"] & X\arrow[r,"\eta_X"] & \sigma^{> 0}X \arrow[r,"\partial_X"] & \Sigma\sigma^{\leq 0} X
		\end{tikzcd}
	\end{equation}
	where $\varepsilon_X$ (resp. $\eta_X$) is induced by the co-unit of the adjunction $(\iota^{\leq 0},\sigma^{\leq 0})$ (resp. by the unit of the adjunction $(\sigma^{> 0},\iota^{> 0})$). 	
	\begin{proposition}\label{prop:BBD2.136}
		Let $\d$ be a triangulated category {equipped} with a t-structure $(\d^{\leq 0},\d^{>0})$.
		\begin{enumerate}
			\item\label{prop:BBD2.1361} {\cite[Théorème~1.3.6]{BBD82}} The functor ${\sigma}^0= \sigma^{\leq 0}\sigma^{> -1}\colon \d\to \d^0$ is \textit{cohomologial}, i.e. it sends distinguished triangles to long exact sequences. {Since $\Sigma^i$ is an auto-equivalence on $\d$ for every integer $i$, it follows from \Cref{prop:BBD2.136}\eqref{prop:BBD2.1361} that any cohomology functor ${\sigma}^i={\sigma}^0\circ \Sigma^i$ is cohomological.}

			\item\label{prop:BBD2.1362} {\cite[Remarque~3.1.17(ii)]{BBD82}} A complex 
			\begin{center}
				\begin{tikzcd}0\arrow[r]&U\arrow[r,"i"]& V\arrow[r,"p"]& W\arrow[r]&0\end{tikzcd}
			\end{center}
			in $\d^0$ is an exact sequence if and only if there exists a morphism ${W\to^{\partial} \Sigma U}$ in $\d$ such that 
			\begin{center}
				\begin{tikzcd}U\arrow[r,"i"]& V\arrow[r,"p"]& W\arrow[r,"\partial"]&\Sigma U\end{tikzcd}
			\end{center}
			is a distinguished triangle in $\d$.
			\item\label{prop:BBD2.1364}{\cite[Théorème~1.3.3(ii)]{BBD82} Let $X\in \d$. The truncation triangle \eqref{eq:trunca} is the unique triangle up to isomorphism where the first term is in $\d^{\leq 0}$, the second term is $X$, and the third term is in $\d^{> 0}$. }
		\end{enumerate}
	\end{proposition}

	\begin{definition}[{\cite[Definition~1.3.16]{BBD82}}]\label{def:texact}
		Let $\d$ be a triangulated category {equipped} with a t-structure $(\d^{\leq 0},\d^{> 0})$, and let $\U$ be a triangulated category {equipped} with a t-structure $(\U^{\leq 0},\U^{> 0})$. A triangle functor $F\colon{\U}\to {\d}{}$ is 
		\begin{enumerate}
			\item \textit{left t-exact} if $F(\U^{> 0})\subseteq \d^{> 0}$,
			\item \textit{right t-exact} if $F(\U^{\leq 0})\subseteq \d^{\leq 0}$,
			\item \textit{t-exact} if it is both left t-exact and right t-exact.
		\end{enumerate}
		A \textit{t-exact subcategory} of $\d$ with respect to the t-structure $(\d^{\leq 0},\d^{> 0})$, is a triangulated subcategory $\s$ of $\d$ such that $\sigma^{\leq 0}X\in \s$ for any $X\in\s$.
	\end{definition}
	
	\begin{remark}\label{rem:tex}
		We might as well have defined a t-exact subcategory to be a triangulated subcategory which is closed under any truncation functor of the form $\sigma^{\leq i}$ or $\sigma^{> i}$, not just $\sigma^{\leq 0}$. First of all, a t-exact subcategory $\s\subseteq\d$ is closed under truncation by $\sigma^{\leq m}$, for all $m\in\Z$, since we have assumed that $\s$ is closed under suspension and desuspension.
		It is also {closed under} $\sigma^{>0}$ (and thus $\sigma^{> m}$, for all $m\in\Z$), which one proves by forming a truncation triangle 
		\begin{center}
			\begin{tikzcd} \sigma^{\leq 0} X \arrow[r,"\varepsilon_X"]& X \arrow[r,""]& \sigma^{> 0}X \arrow[r,""] &  \Sigma\sigma^{\leq 0} X, \end{tikzcd}
		\end{center}
		where $X\in \s$, and then pointing out that $\sigma^{> 0}X$ is a cone of $\varepsilon_X$, a morphism in {the triangulated subcategory} $\s$.
	\end{remark}
	
	The t-exact subcategories of $\d$ are precisely those on which we can induce a t-structure $(\d^{\leq 0}\cap~\s,\d^{> 0}\cap~\s)$ such that the inclusion functor $\s\to \d$ is t-exact \cite[{§1.3.19}]{BBD82}. The heart of the induced t-structure is {$\s^0\defeq \d^0\cap \s$}, and the cohomological functor $\s\to\s^0$ is naturally isomorphic to the restriction of ${\sigma}^0\colon \d \to \d^0$. 
		
	We include the following well-known result to emphasise that t-exact {functors} induce {exact functor of hearts}.
	\begin{proposition}[{\cite[Proposition~1.3.17]{BBD82}}]\label{prop:BBD1.3.7}
		Let $F\colon \U \to \d$ be a t-exact functor. Then the composite functor
		\begin{center}
			\begin{tikzcd}
				\U^0 \arrow[r,hook] & \U \arrow[r,"F"] & \d \arrow[r,"{\sigma}^0"] & \d^0
			\end{tikzcd}
		\end{center}
		is an exact functor of abelian categories. In particular, if $\s$ is a t-exact subcategory of $\d$, then $\s^0$ is an exact abelian subcategory of $\d^0$.
	\end{proposition}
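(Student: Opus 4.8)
The plan is to reduce everything to the classical statement (Proposition~1.3.17 of \cite{BBD82}) for a genuine t-exact \emph{functor}, and then observe that the inclusion of a t-exact subcategory is exactly such a functor. First I would recall, as set up in the paragraph preceding the statement, that if $\s\subseteq\d$ is a t-exact subcategory then the pair $(\d^{\leq 0}\cap\s,\d^{>0}\cap\s)$ is a t-structure on $\s$: condition \ref{t2} is immediate since $\d^{\leq 0}\cap\s$ is suspended (intersection of two subcategories closed under $\Sigma$ and extensions), \ref{t11} is inherited from $\d$, and \ref{t12} follows because for $X\in\s$ the truncation triangle $\sigma^{\leq 0}X\to X\to\sigma^{>0}X\to\Sigma\sigma^{\leq 0}X$ has all three terms in $\s$ (the first by the defining property of a t-exact subcategory, the third as a cone of a morphism in $\s$, as in \Cref{rem:tex}). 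Hence the inclusion $\iota\colon\s\to\d$ is a triangle functor which is t-exact in the sense of \Cref{def:texact}, since $\iota(\s^{\leq 0})=\d^{\leq 0}\cap\s\subseteq\d^{\leq 0}$ and dually.

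Next I would apply \Cref{prop:BBD1.3.7} (the first assertion, which is \cite[Proposition~1.3.17]{BBD82}) to $F=\iota$: the composite
\begin{equation*}
\s^0\hookrightarrow\s\xrightarrow{\ \iota\ }\d\xrightarrow{\ H^0\ }\d^0
\end{equation*}
is an exact functor of abelian categories. But by the same paragraph, the cohomological functor $\s\to\s^0$ of the induced t-structure agrees with the restriction of $H^0\colon\d\to\d^0$; equivalently, the composite above is naturally isomorphic to the inclusion $\s^0\hookrightarrow\d^0$, since on an object $X\in\s^0$ we have $X\in\d^{[0,0]}$ already, so $H^0(\iota X)=X$ functorially. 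Therefore the inclusion $\s^0\hookrightarrow\d^0$ is exact, which is precisely the assertion that $\s^0$ is an exact abelian subcategory of $\d^0$: it is abelian (heart of a t-structure, by \Cref{prop:BBD2.136}), it is a full subcategory of $\d^0$, and a short exact sequence in $\s^0$ maps to a short exact sequence in $\d^0$ (and conversely a complex in $\s^0$ that is exact in $\d^0$ is exact in $\s^0$, since by \Cref{prop:BBD2.136}(2) exactness of a complex in any heart is detected by the existence of a connecting morphism making a distinguished triangle in the ambient triangulated category, and that triangle lives in $\s$).

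For the first assertion I would not repeat the proof of \cite[Proposition~1.3.17]{BBD82}, merely cite it; the only content to spell out is that exactness of $H^0\circ F\restriction_{\U^0}$ follows from $H^0$ being cohomological (\Cref{prop:BBD2.136}\eqref{prop:BBD2.1361}) together with $F$ being a triangle functor and t-exact, so that a short exact sequence in $\U^0$, realised as a triangle in $\U$ via \Cref{prop:BBD2.136}(2), is sent by $F$ to a triangle in $\d$ whose outer terms lie in $\d^{[0,0]}$, whence the long exact $H^0$-sequence degenerates to a short exact sequence. I do not anticipate a serious obstacle here; the main point requiring care is the identification of the induced cohomological functor on $\s$ with the restriction of $H^0$, i.e. making sure the t-structure we put on $\s$ is compatible enough that "exact in $\s^0$'' and "exact in $\d^0$'' coincide for complexes in $\s^0$ — and this is exactly what \Cref{prop:BBD2.136}(2) delivers, since the witnessing connecting morphism and distinguished triangle can be taken inside the triangulated subcategory $\s$.
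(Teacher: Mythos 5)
Your proposal is correct and matches how the paper handles this statement: the paper gives no argument of its own, citing \cite[Proposition~1.3.17]{BBD82} for the t-exact functor assertion, with the ``in particular'' clause following exactly as you say from the induced t-structure $(\d^{\leq 0}\cap\s,\d^{>0}\cap\s)$ on a t-exact subcategory and the t-exactness of the inclusion $\s\hookrightarrow\d$, as set up in the paragraph preceding the proposition. Your additional verifications (that the composite $\s^0\hookrightarrow\s\to\d\to^{H^0}\d^0$ is naturally isomorphic to the inclusion of $\s^0$ into $\d^0$, and that exactness in $\s^0$ and in $\d^0$ agree via \Cref{prop:BBD2.136}(2)) are accurate fillings-in of routine details rather than a different route.
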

	
	The subcategory $\d^0=\d^{\leq 0}\cap \d^{>0}$ is an intersection of extension-closed subcategories. {If} $\s$ is a triangulated subcategory of $\d$, the intersection $\d^0\cap \s$ is extension-closed in {the triangulated category $\d$. It now follows from \Cref{prop:BBD2.136}\eqref{prop:BBD2.1362} that $\s^0= \d^0\cap \s$ is extension-closed in {the abelian category} $\d^0$}. {An extension-closed} exact abelian subcategory is often called a \textit{wide subcategory} (or a \textit{weak Serre subcategory}). Equivalently, it is a subcategory which is closed under kernels, cokernels and extensions. {In particular, every idempotent endomorphism admits a kernel, which is to say that the wide subcategory in question is idempotent complete. It follows that wide subcategories are closed under taking direct summands.}
	
	\begin{lemma}\label{lem:thicktex}
		Let $\d$ be a triangulated category equipped with a bounded t-structure $(\d^{\leq 0},\d^{>0})$, and let $\s$ be a t-exact subcategory of $\d$. Then $\s$ is thick.
	\end{lemma}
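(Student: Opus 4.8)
The plan is to show that $\s$ is closed under direct summands, since it is already a triangulated subcategory by hypothesis. So suppose $X \in \s$ and $X \cong Y \oplus Z$ in $\d$; I want to conclude $Y \in \s$ (and hence $Z \in \s$ too). The idea is to use the boundedness of the t-structure to reduce to a statement about the cohomology objects, and then exploit that the induced t-structure on $\s$ has heart $\s^0 = \s \cap \d^0$, which by \Cref{prop:BBD1.3.7} is a wide subcategory of $\d^0$, in particular closed under direct summands.

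First I would observe, using \Cref{rem:tex}, that $\s$ is closed under all truncation functors $\sigma^{\leq n}$, $\sigma^{>n}$, and hence under all cohomology functors $H^i$. So for every $X \in \s$ and every $i \in \Z$, the object $H^i(X)$ lies in $\s \cap \d^i = \Sigma^{-i}(\s \cap \d^0) = \Sigma^{-i}\s^0$. Now take $X \in \s$ with $X \cong Y \oplus Z$. Since $H^i$ is additive, $H^i(X) \cong H^i(Y) \oplus H^i(Z)$, and since $H^i(X) \in \Sigma^{-i}\s^0$ and $\s^0$ is closed under direct summands (being a wide subcategory, by \Cref{prop:BBD1.3.7}), we get $H^i(Y) \in \Sigma^{-i}\s^0 \subseteq \s$ for every $i$. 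Because the t-structure is bounded, $H^i(Y) = 0$ for all but finitely many $i$, say for $i$ outside $[a,b]$.

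The remaining step is to assemble $Y$ from its cohomology objects, all of which lie in $\s$. I would do this by induction on $b - a$ using the truncation triangles: the triangle $\sigma^{\leq a} Y \to Y \to \sigma^{>a} Y \to \Sigma \sigma^{\leq a} Y$ has $\sigma^{\leq a} Y \cong H^a(Y) \in \s$ (shifted appropriately, but in any case in $\s$ since $\s$ is closed under the shift), while $\sigma^{>a} Y$ has cohomology concentrated in $[a+1,b]$, so by induction $\sigma^{>a} Y \in \s$; then $Y$ is the cone of a morphism between two objects of $\s$, hence $Y \in \s$ because $\s$ is a triangulated subcategory. The base case $a = b$ is the statement that $Y$, having cohomology concentrated in a single degree $a$, is isomorphic to $\Sigma^{-a} H^a(Y) \in \s$. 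One subtlety to be careful about: I should justify that an object with cohomology in a single degree is the corresponding shift of that cohomology object — this follows from the truncation triangles, since $\sigma^{\leq a}Y \to Y$ and $Y \to \sigma^{>a-1}Y$ become isomorphisms once the outer terms vanish.

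The main obstacle is bookkeeping rather than conceptual: one must carefully track which shifts of $\s^0$ equal $\s \cap \d^i$ and make sure the inductive dévissage over the (finite, by boundedness) cohomological support genuinely stays inside the triangulated subcategory $\s$ at each step. Boundedness of the t-structure is essential and is used precisely to guarantee the cohomological support is finite, so that the induction terminates.
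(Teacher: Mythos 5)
Your proof is correct and is essentially the paper's argument: both rely on boundedness of the t-structure to induct on the length of the cohomological support, using truncation triangles together with the fact that the heart $\s^0$ is a wide (hence summand-closed) subcategory of $\d^0$, and that $\s$ is closed under truncations and cones. The only difference is organizational: the paper inducts directly on the number of non-zero cohomologies of the object $X\oplus Y\in\s$, splitting both summands simultaneously by truncation, whereas you first extract each $H^i(Y)$ via additivity of $H^i$ and summand-closure of $\s^0$, and then reassemble $Y$ inside $\s$ by d\'evissage.
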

	\begin{proof}
		It is to be shown that $\s$ is closed under direct summands. Let $X,Y\in \d$ be such that $X\oplus Y\in \s$. Since $(\d^{\leq 0},\d^{>0})$ is bounded, only finitely many integers $i$ are such that ${\sigma}^{i}(X\oplus Y)$ is non-zero. We proceed by induction on the number $d$ of such integers $i$.
		
		The case $d=0$ concerns only zero objects, and is therefore trivial to establish. If $d=1$, we have that $X\oplus Y\in \s^i$ for some $i$. Since $\s^i$ is equivalent to $\s^0$, we may assume without loss of generality that $i=0$. {Having remarked above that wide subcategories are idempotent complete,} it follows that the direct summand $X$ of $X\oplus Y$ is in {the wide subcategory} $\s^0$, as desired.
		
		Assume that the statement is true whenever $d<\ell$ for some $\ell\geq 2$. If $d=\ell$, {suppose that $X\oplus Y\in \s$ has {non-vanishing} cohomology in $\ell$ positions, and let $m$ be the smallest integer such that $\sigma^{>m+1}(X\oplus Y)=0$. Let ${X_1= \sigma^{\leq m }X}$ and ${X_2= \sigma^{> m}X}$. In the truncation triangle}
		\begin{equation*}
		\begin{tikzcd}
		X_1\arrow[r]& X\arrow[r]& X_2\arrow[r]&\Sigma X_1,
		\end{tikzcd}
		\end{equation*}
		the object $X_1$ has {non-vanishing} cohomology in $\ell-1$ positions, and $X_2$ has {non-vanishing} cohomology in one position. In the same vein, {let $Y_1= \sigma^{\leq m }Y$ and $Y_2= \sigma^{> m}Y$. In the truncation triangle}
		\begin{equation*}
		\begin{tikzcd}
		Y_1\arrow[r]& Y\arrow[r]& Y_2\arrow[r]&\Sigma Y_1,
		\end{tikzcd}
		\end{equation*}
		{the objects $Y_1$ and $Y_2$ have the same properties as $X_1$ and $X_2$ above.} Then $X_1\oplus Y_1=\sigma^{\leq m}(X\oplus Y)$ and $X_2\oplus Y_2=\sigma^{> m}(X\oplus Y)$ are objects in $\s$ with cohomology in fewer than $\ell$ positions, whence $X_1,X_2\in \s$ by the induction hypothesis. {Since} $\s$ is a triangulated subcategory of $\d$, it follows that $X\in \s$, since it is an extension of $X_2$ {in} $X_1$.
	\end{proof}
	
	Following Zhang--Cai \cite[Definition~2.3]{ZC17}, we say that a thick subcategory $\s$ of $\d$ is \textit{${\sigma}^0$-stable} if ${\sigma}^0X\in \s$ for all $X\in\s$. {Having defined the cohomology functor ${\sigma}^i$ by ${\sigma}^0 \circ \Sigma^n$, it is equivalent to impose that $\s$ is closed under all cohomology functors ${\sigma}^i$.}
	
	\begin{lemma}\label{lem:boundedtex}
		Let $\d$ be a triangulated category {equipped} with a t-structure $(\d^{\leq 0 },\d^{> 0})$.
		A thick subcategory $\s$ {of} $\d$ is ${\sigma}^0$-stable if it is t-exact. If the t-structure is bounded, the converse also holds.
	\end{lemma}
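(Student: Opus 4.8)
The plan is to handle the two implications separately; the forward direction is immediate, while the converse requires an induction on the cohomological width of an object.

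For the forward implication, suppose $\s$ is thick and t-exact. By \Cref{rem:tex}, a t-exact subcategory is closed under every truncation functor $\sigma^{\leq m}$ and $\sigma^{> m}$. Since $H^0 = \sigma^{\leq 0}\sigma^{> -1}$, applying first $\sigma^{> -1}$ and then $\sigma^{\leq 0}$ to an object of $\s$ keeps us inside $\s$, so $H^0(X) \in \s$ for every $X \in \s$; that is, $\s$ is $H^0$-stable. No boundedness hypothesis is needed here.

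For the converse, assume the t-structure is bounded and that $\s$ is a thick, $H^0$-stable subcategory; I must show $\sigma^{\leq 0}X \in \s$ for every $X \in \s$. First I would note that, being triangulated, $\s$ is closed under $\Sigma^{\pm 1}$, so $H^i(X) = \Sigma^{-i}H^0(\Sigma^i X) \in \s$ for all $i \in \Z$ and all $X \in \s$. Now fix $X \in \s$; by boundedness $X \in \d^{[m,n]}$ for some integers $m \leq n$, and I would induct on $n$. If $n \leq 0$, then $X \in \d^{\leq 0}$ and $\sigma^{\leq 0}X = X \in \s$, which settles the base case (and, more generally, $0 \in \s$ covers any zero objects encountered below). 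If $n \geq 1$, consider the truncation triangle
\[
\sigma^{\leq n-1}X \to X \to \sigma^{> n-1}X \to \Sigma\sigma^{\leq n-1}X .
\]
Since $X \in \d^{\leq n}$ and $\d^{\leq n}$ is closed under cones (as $\d^{\leq 0}$ is suspended), the object $\sigma^{> n-1}X$ lies in $\d^{[n,n]}$, hence equals $H^n(X)$ and therefore belongs to $\s$. As $\s$ is triangulated and contains $X$ and $\sigma^{> n-1}X$, it contains the third vertex $\sigma^{\leq n-1}X$ of the triangle. That object lies in $\d^{[m,n-1]}$, so the inductive hypothesis yields $\sigma^{\leq 0}(\sigma^{\leq n-1}X) \in \s$; since $0 \leq n-1$ we have $\sigma^{\leq 0}\sigma^{\leq n-1} = \sigma^{\leq 0}$, and so $\sigma^{\leq 0}X \in \s$, completing the induction.

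I expect the only genuine work to be in the converse, and specifically in identifying the top truncation $\sigma^{> n-1}X$ of a bounded object with a single cohomology object already known to lie in $\s$: this is where boundedness enters (to initialise and terminate the induction), together with the stability of the aisle under cones. The remaining manipulations — in particular the identity $\sigma^{\leq 0}\sigma^{\leq n-1} = \sigma^{\leq 0}$ and the placement of $\sigma^{\leq n-1}X$ in $\d^{[m,n-1]}$ — are routine consequences of the defining properties of truncation functors.
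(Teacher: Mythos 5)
Your proposal is correct and follows essentially the same route as the paper: the forward direction via \Cref{rem:tex}, and the converse by an induction that peels off the top cohomology $H^nX\in\s$ through the truncation triangle $\sigma^{\leq n-1}X \to \sigma^{\leq n}X \to H^nX$, with boundedness guaranteeing the induction terminates. The only difference is cosmetic (you induct on the cohomological width of $X$ rather than relating $\sigma^{\leq n-1}X\in\s$ and $\sigma^{\leq n}X\in\s$ for arbitrary $n$), and all the auxiliary steps you flag as routine are indeed standard properties of truncation functors.
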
 
	\begin{proof}
		We explained in \Cref{rem:tex} that a t-exact subcategory $\s$ is closed under truncation by $\sigma^{\leq 0}$ and $\sigma^{> -1}$. Hence, it is closed under ${\sigma}^0=\sigma^{\leq 0}\sigma^{> -1}$, which is to be ${\sigma}^0$-stable.
		
		Conversely, suppose that the t-structure $(\d^{\leq 0 },\d^{> 0})$ is bounded and let $\s$ be an ${\sigma}^0$-stable subcategory of $\d$. For any $X\in\s$ and any integer $n$, we can form a truncation triangle
		\begin{center}
			\begin{tikzcd} \sigma^{\leq n-1} X \arrow[r,""]& \sigma^{\leq n}X \arrow[r,""]& \Sigma^{-n}H^nX \arrow[r,""] &  \Sigma\sigma^{\leq n-1} X. \end{tikzcd}
		\end{center}
		The object {$\Sigma^{-n}H^nX$} lies in $\s$, and hence $\sigma^{\leq n-1}X$ belongs to $\s$ if and only if $\sigma^{\leq n}X$ does. By induction, it follows that $\sigma^{\leq 0}X\in\s$ if and only if $\sigma^{\leq n}X\in \s$, where $n$ is an arbitrary integer.
		The boundedness of the t-structure ensures that $\sigma^{\leq n}X=X$ {for some $n$; indeed, the object $X$ lies in {some} subcategory of $\d$ of the form $\d^{[m,n]}$, where the truncation functor $\sigma^{\leq n}$ acts as the identity.} We conclude that $\s$ is t-exact, {having} shown that $\sigma^{\leq 0}X\in\s$ for any $X\in\s$. 
	\end{proof}
	
	{Fix a t-structure $(\d^{\leq 0},\d^{>0})$ on $\d$.} The set of t-exact subcategories of $\d$ with respect to $(\d^{\leq 0},\d^{>0})$ is closed under arbitrary intersections {of aisles}. {Ordering this set with respect to inclusion of the aisles}, it has been endowed with the structure of a complete lattice. The join of a family $\{\s_i\}_{i\in I}$ is the smallest t-exact subcategory containing all $\s_i$, whereas the meet is computed by intersection. We denote this lattice by $\texact_{(\d^{\leq 0},\d^{>0})}(\d)$, and we let $\wide(\d^0)$ {denote} the lattice of wide subcategories in $\d^0$, {ordered by inclusion}. We conclude this subsection with a result providing mutually inverse isomorphism{s} between these lattices, {and a lemma {that will be handy for later use.}}
	
	\begin{theorem}[{\cite[Theorem~2.5]{ZC17}}]\label{ZC17.2.5}
		Let $\d$ be a triangulated category {equipped} with a bounded t-structure $(\d^{\leq 0},\d^{> 0})$. The cohomology functor ${\sigma}^0\colon \d\to\d^0$ induces an isomorphism of lattices
		\begin{equation*}
		{\sigma}^0\colon \texact_{(\d^{\leq 0},\d^{>0})}(\d)\to\wide(\d^0),
		\end{equation*}
		with inverse $\d_{(-)}\colon \wide(\d^0)\to\texact_{(\d^{\leq 0},\d^{>0})}(\d)$ sending a wide subcategory $\w\subseteq\d^0$ to the t-exact subcategory
		\begin{equation*}
		\d_{\w} \defeq \{X\in \d \mid {\sigma}^iX\in \w \text{ for all } i\in\Z\} 
		\end{equation*}
		of $\d$.
	\end{theorem}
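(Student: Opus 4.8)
The plan is to exhibit $H^0$ and $\d_{(-)}$ as a pair of mutually inverse, order-preserving maps. Since the preceding paragraph records that $\texact(\d)$ and $\wide(\d^0)$ are complete lattices, and since an order isomorphism between complete lattices automatically preserves arbitrary meets and joins, it suffices to check three things: that both assignments land where they should, that each is monotone, and that the two composites are the respective identity maps. Monotonicity is immediate from the set-theoretic descriptions of the two assignments, so the content lies in well-definedness and in the two composition identities.

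For well-definedness of $H^0$, let $\s$ be a t-exact subcategory. By \Cref{rem:tex} it is closed under $H^0$, and since $H^0$ restricts to the identity functor on the heart $\d^0$, we get $H^0(\s)=\s\cap\d^0$; this is a wide subcategory of $\d^0$ by \Cref{prop:BBD1.3.7}. For well-definedness of $\d_{(-)}$, fix a wide subcategory $\w\subseteq\d^0$. I would first verify that $\d_\w$ is a triangulated subcategory: applying the long exact cohomology sequence from \Cref{prop:BBD2.136} to a triangle $X\to Y\to Z\to\Sigma X$ with $X,Y\in\d_\w$ exhibits each $H^iZ$ as an extension of a subobject of $H^{i+1}X$ by a quotient of $H^iY$, hence as an object of $\w$ since $\w$ is closed under subobjects, quotients and extensions. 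Next, $\d_\w$ is thick, because each $H^i$ is additive and $\w$ is closed under direct summands; and $\d_\w$ is $H^0$-stable, because objects of $\d^0$ have cohomology concentrated in degree zero. By \Cref{lem:boundedtex} --- this is the point at which boundedness of the t-structure is used --- it follows that $\d_\w$ is t-exact.

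It remains to check the two composites. One direction is easy: $H^0(\d_\w)=\d_\w\cap\d^0=\{X\in\d^0 \mid H^iX\in\w \text{ for all }i\}=\w$, since an object of the heart is its own zeroth cohomology and has vanishing cohomology in every other degree. For the other direction one must show $\d_{H^0(\s)}=\s$ for every t-exact $\s$. The inclusion $\s\subseteq\d_{H^0(\s)}$ holds because a t-exact subcategory is closed under each $H^i$ (\Cref{rem:tex}). The reverse inclusion is the crux: given $X$ with every cohomology object in $\s$, I would argue, exactly as in the proof of \Cref{lem:thicktex}, by induction on the number of nonzero cohomology objects of $X$ (finite, by boundedness) --- a truncation triangle peels off one cohomology object, which lies in $\s$, from an object with strictly fewer nonzero cohomology objects, and $\s$ being triangulated closes the induction, so $X\in\s$.

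I expect this last reconstruction step to be the only real obstacle: it is where boundedness is genuinely needed (a non-bounded t-structure would allow, e.g., infinite products of shifted heart objects that are not determined by their cohomology), and it also requires care with the convention identifying the cohomology object $H^iX\in\d^i$ with an object of $\d^0$ via $\Sigma^{-i}$, so that the membership condition $H^iX\in\w$ defining $\d_\w$ is meaningful. Once this is in hand, monotonicity of both maps is immediate, and the lattice isomorphism follows formally.
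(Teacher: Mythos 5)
The paper itself gives no proof of this statement --- it is quoted from Zhang--Cai, and the only in-text justification supplied is the remark that $\d_{\w}$ is $H^0$-stable and hence t-exact by \Cref{lem:boundedtex} --- so your argument stands on its own. Its skeleton is sound and uses exactly the toolkit the surrounding results provide: \Cref{rem:tex} and \Cref{prop:BBD1.3.7} for well-definedness of $H^0$ on t-exact subcategories, \Cref{lem:boundedtex} for t-exactness of $\d_{\w}$ (the same route the paper sketches), and an induction on the number of non-vanishing cohomology objects, as in \Cref{lem:thicktex}, for the crucial inclusion $\d_{H^0(\s)}\subseteq\s$; that is indeed where boundedness is used, and your handling of the convention $H^iX=H^0(\Sigma^iX)\in\d^0$ is the right one.

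One justification, as written, is false: a wide subcategory is closed under kernels, cokernels and extensions, \emph{not} under arbitrary subobjects and quotients (the latter is the defining property of a Serre subcategory, which is strictly stronger). So the sentence ``hence an object of $\w$ since $\w$ is closed under subobjects, quotients and extensions'' does not stand as stated. The argument is nevertheless easily repaired, because the two outer terms of the short exact sequence you extract from the long exact cohomology sequence of $X\to Y\to Z\to\Sigma X$ are not arbitrary subobjects and quotients: they are $\mathrm{coker}(H^iX\to H^iY)$ and $\ker(H^{i+1}X\to H^{i+1}Y)$, i.e. the cokernel and kernel of morphisms between objects of $\w$, hence lie in $\w$ by the genuine closure properties, and extension-closure then gives $H^iZ\in\w$. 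With that correction the proof of well-definedness of $\d_{(-)}$, and with it the whole argument, is complete.
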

	
	Note that $\d_{\w}$ clearly is ${\sigma}^0$-stable {by definition}. By \Cref{lem:boundedtex}, it is indeed a t-exact subcategory {of $\d$}.
		
	\begin{lemma}\label{lem:plus_rev}
	{Let $\d$ be a triangulated category {equipped} with a bounded t-structure $(\d^{\leq 0},\d^{> 0})$. }
	\begin{enumerate}
		\item\label{lem:plus_rev1} {The map ${\sigma}^0  \colon \texact_{(\d^{\leq 0},\d^{>0})}(\d)\to\wide(\d^0)$, as defined in \Cref{ZC17.2.5}, can be expressed by $\s \mapsto \s \cap \d^0$, intersecting a t-exact subcategory $\s$ with the heart $\d^0$.}
		\item\label{lem:plus_rev2} {For $\w\in \wide(\d^0)$, let $\d_{\w}$ be as in \Cref{ZC17.2.5}. We have that $\d_{\w}=\thick_{\d}(\w)$.}
	\end{enumerate}
	\end{lemma}
	\begin{proof}
	{We first prove \eqref{lem:plus_rev1}. Let $\s$ be a t-exact subcategory of $\d$. For any $X\in \s\cap \d^0$, we have that ${\sigma}^0X$ is isomorphic to $X$, putting $X$ in ${\sigma}^0\s$, namely the essential image of $\s$ under the cohomology functor. This shows that $\s\cap \d^0\subseteq {\sigma}^0\s$. The reverse inclusion $\s\cap \d^0\supseteq {\sigma}^0\s$ also holds; we have that ${\sigma}^0\s$ is contained in $\d^0$ by the definition of the cohomology functor, and in $\s$ by the ${\sigma}^0$-stability of $\s$ (see \Cref{lem:boundedtex}).}

	{Secondly and finally, we prove \eqref{lem:plus_rev2}. We will regard the wide subcategory $\w\subseteq \d^0$ as a subcategory of $\d$. Since $\d_{\w}$ is a thick subcategory of $\d$ containing $\w$, it immediately follows that $\d_{\w}\supseteq \thick_{\d}(\w)$. We complete the proof by showing that any object $X\in \d_{\w}$ can be built from objects in $\w$ using extensions, (de)suspensions and direct summands. Since the t-structure $(\d^{\leq 0},\d^{> 0})$ is bounded, there exists some interval $[m,n]$ of integers such that $X\in\d^{[m,n]}$. We proceed by induction of the length of this interval, namely the integer $\ell=n-m$. The case $\ell = 0$ concerns intervals of the form $[m,m]$. We can build any object in $\d^{[m,m]}\cap \d_{\w}$ by (de)suspending objects in $\w$, thus settling the anchor step. Suppose that the statement has been shown for all intervals of length {shorter} than $\ell$, where $\ell\geq 1$. Supposing that $X\in \d^{[m,n]}\cap \d_{\w}$, and that the interval $[m,n]$ is of length $\ell$, one considers the truncation triangle}
	\begin{center}
			\begin{tikzcd} \sigma^{\leq n-1} X \arrow[r,""]& X \arrow[r,""]& \sigma^{>n-1}X \arrow[r,""] &  \Sigma\sigma^{\leq n-1} X. \end{tikzcd}
	\end{center}
	{{By \Cref{lem:boundedtex}}, the objects $\sigma^{\leq n-1} X$ and $\sigma^{>n-1}X$ are in $\d_{\w}$ by t-exactness.
 By the induction hypothesis, both objects are in $\thick_{\d}(\w)$. Since $X$ is an extension of the two, we conclude that $X\in \thick_{\d}(\w)$. By the Principle of Induction, we conclude that $\d_{\w}\subseteq \thick_{\d}(\w)$, and in turn that these {triangulated} subcategories of $\d$ are equal.}
	\end{proof}

	\subsection{Silting and $\tau$-tilting}
	
	\begin{definition}\label{def:perfsilt}
		Let $\c$ be a triangulated category {with suspension functor $\Sigma$}. An object ${ X}\in\c$ is \textit{presilting} if $\c({ X},\Sigma^n { X})=0$ for all positive integers $n$. A presilting object ${ X}$ is a \textit{silting object} if, in addition, $\thick({ X})=\c$. 
	\end{definition}
	
	{Recall that if $\c$ is a Krull--Schmidt $k$-category, an object $X\in\c$ is \textit{basic} if any two distinct indecomposable direct summands of $X$ are non-isomorphic.}
	The sets of {basic} presilting and {basic} silting objects in $\c$ are denoted by $\presilt(\c)$ and $\silt(\c)$, respectively.
	The subsets containing the objects having a fixed {(basic)} presilting object ${ P}$ as a direct summand are denoted by $\presilt_{ P}(\c)$ and $\silt_{ P}(\c)$, respectively.
	
	\begin{definition}\label{def:perf2silt}
		For a fixed silting object ${ S}\in\c$, we say that an object ${ P}\in\c$ is \textit{$2_{ S}$-term} {if} it is belongs to $\add({ S})*\Sigma\add({ S})$, i.e if there exists a triangle
		\begin{equation*}
		{ S}_1\to { S}_0\to { P}\to \Sigma { S}_1,
		\end{equation*}
		where ${ S}_1,{ S}_0\in\add({ S})$. 
		The set of {basic} $2_{ S}$-term presilting objects is denoted by $\npresilt{2}{ S}(\c)$, and the subset of {basic} $2_{S}$-term silting objects by $\nsilt{2}{ S}(\c)$. The subsets containing those having {a (basic) presilting object} ${ P}$ is a direct summand are denoted by $\npresilt{2}{ S}_{ P}(\c)$ and $\nsilt{2}{ S}_{ P}(\c)$, respectively. {When the choice of silting object $S$ is clear from context, we refer to $2_{ S}$-term (pre)silting objects as \textit{two-term (pre)silting objects}}.
	\end{definition}
	
	{We note that $S$ in the above definition does not have to be basic. However, if $R$ denotes the largest basic direct summand of $S$, the set of basic $2_{S}$-term silting objects in $\c$ coincides with that of basic $2_{R}$-term silting objects in $\c$. Thanks to this fact, we will not need to assume that $S$ is basic in our results.}
	
	If $\c=\Kb(\proj {\Lambda})$, the bounded homotopy category of a {finite dimensional} $k$-algebra ${\Lambda}$, then ${\Lambda}$ is a silting object in $\c$ {(removing redundant direct summands will yield a basic silting object)}. More generally, if $A$ is a non-positive dg $k$-algebra, then $A$ is silting in $\per(A)$. 
	In these cases, we write $\nsilt{2}{}(A)$ for the set of $2_A$-term {basic} silting objects in $\per(A)$ (and so on), omitting the subscript beneath the symbol $2$.
	
	
	The set $\nsilt{2}{S}(\c)$ admits a partial order $\geq$, where ${ P}\geq { Q}$ provided that $\c({ P},\Sigma^n { Q}) = 0$ whenever $n>0$ \cite[Theorem~2.11 and Proposition~2.14]{AI12}.
	If $\c$ is Hom-finite and Krull--Schmidt, it turns out that any $2_{ S}$-term presilting object ${ P}\in\c$ can be completed to a $2_{ S}$-term silting object. One such silting object is the \textit{Bongartz completion} ${ T^{+}_P}\defeq{ P}\oplus { Q^+}$ of ${ P}$, where ${Q^+}$ is defined by the triangle
	\begin{equation}\label{eq:Bon-tri}
	{ S}\to { Q^+}\to { P}_0\to^{\beta_{\Sigma { S}}}\Sigma { S},
	\end{equation}
	in which $\beta_{\Sigma { S}}$ is a right $\add({ P})$-approximation of $\Sigma { S}$ \cite[{Lemma 4.2}]{IJY14}. Consequently, a $2_S$-term presilting object is precisely the same thing as a direct summand of a $2_S$-term silting object. If $|P|$ denotes the number of non-isomorphic indecomposable direct summands of a $2_S$-term presilting object $P$, we have that {$P$} is silting if and only if $|P|=|S|$ \cite[{Proposition~4.3}]{IJY14}. {If $P$ is basic, we make its Bongartz completion basic by removing superfluous direct summands, so that $T^+_P$ becomes an element in $\nsilt{2}{S}(\c)$.} The Bongartz completion of $P$ is then the maximal element in $\nsilt{2}{S}(\c)$ (with respect to the partial order $\geq$ defined above) in which ${ P}$ is a direct summand. 
	
	There is a close connection between the $2_{ S}$-term silting theory of $\c$ and the support $\tau$-tilting theory of the finite dimensional $k$-algebra $\End_{\c}({ S})$. We now recall the definitions and results of $\tau$-tilting theory \cite{AIR14}.
	\begin{definition}\label{def:tautilt}
		For a finite dimensional $k$-algebra ${\Lambda}$, a finitely generated right ${\Lambda}$-module $M$ is \textit{$\tau$-rigid} if $\Hom_{{\Lambda}}(M,\tau M)=0$, where $\tau$ is the Auslander--Reiten translation. A pair $(M,Q)$ is \textit{$\tau$-rigid} if $M$ is a $\tau$-rigid module and $Q$ is a finitely generated projective right ${\Lambda}$-module such that $\Hom_{{\Lambda}}(Q,M)=0$. A $\tau$-rigid pair $(M,Q)$ is \textit{support $\tau$-tilting} {if} $|M|+|Q|=|{\Lambda}|$, where $|X|$ is the number of non-isomorphic indecomposable direct summands of $X$. An ${\Lambda}$-module $M$ is \textit{support $\tau$-tilting} if there exists a projective ${\Lambda}$-module {$Q$} such that $(M,Q)$ is a {support} $\tau$-tilting pair.
	\end{definition}
	
	{We identify the $\tau$-rigid pairs $(M,Q)$ and $(N,R)$ if $\add(M)=\add(N)$ and $\add(Q)=\add(R)$. We say that $(M,Q)$ is \textit{basic} if no two indecomposable direct summands of $(M,Q)$ are isomorphic. By the Krull--Schmidt property of $\fgMod{{\Lambda}}$, any $\tau$-rigid pair can be identified with a basic $\tau$-rigid pair.}
	A $\tau$-rigid pair $(N,R)$ is a \textit{direct summand} of the $\tau$-rigid pair $(M,Q)$ if $N$ is a direct summand of $M$ and $R$ is a direct summand of $Q$. {Note that indecomposable $\tau$-rigid pairs necessarily have the zero module as exactly one of the two components.} If $(M,Q)$ and $(M,R)$ are support $\tau$-tilting pairs, then $\add(Q)=\add(R)$ \cite[Proposition~2.3(b)]{AIR14}, {so $Q=R$ if these pairs are basic}. It follows that we have a one-to-one correspondence between basic support $\tau$-tilting modules and {basic} support $\tau$-tilting pairs, which truncates a pair $(M,Q)$ to $M$.
	
	We denote the set of {basic} $\tau$-rigid pairs in $\fgMod {\Lambda}$ by $\staurigidpair({\Lambda})$, and those having a $\tau$-rigid pair $(M,Q)$ as a direct summand by $\staurigidpair_{(M,Q)}({\Lambda})$. {Both of these sets contain subsets consisting of basic support $\tau$-tilting pairs,} that are denoted by $\stautiltpair({\Lambda})$ and $\stautiltpair_{(M,Q)}({\Lambda})$, respectively. {Also, let $\stautilt(\Lambda)$ denote the set of basic {support} $\tau$-tilting $\Lambda$-modules up to isomorphism.} We say that a $k$-algebra is \textit{$\tau$-tilting finite} if {$\stautilt(\Lambda)$ is a finite set.}
		
	The set of support $\tau$-tilting ${\Lambda}$-modules is inextricably linked with functorially finite torsion classes of $\fgMod {\Lambda}$. Recall that a \textit{torsion class} (resp.
	\textit{torsion-free class}) of $\fgMod {\Lambda}$ is a subcategory which is closed under factor modules (resp. submodules) and extensions. If $\mathcal{G}$ is a torsion class, the right perpendicular category $\mathcal{G}^{\perp}$ is a torsion-free class. Dually, the left perpendicular category of a torsion-free class is a torsion class. A \textit{torsion pair} is a pair of subcategories $(\mathcal{G},\mathcal{F})$ where $\mathcal{G}^{\perp}=\mathcal{F}$ and ${^{\perp}\mathcal{F}}=\mathcal{G}$. It is indeed the case that $\mathcal{G}$ is a torsion class and that $\mathcal{F}$ is {a} torsion-free {class}.  {For every $X\in  \fgMod {\Lambda}$ one has a canonical exact sequence
	\begin{equation}\label{eq:torsionpair_seq}
		0 \to tX\mono X\epi f X \to 0,
	\end{equation}
	in which the objects $tX$ and $fX$ are determined by functors $t\colon \fgMod {\Lambda} \to \mathcal{G}$ and $f\colon \fgMod {\Lambda} \to \mathcal{F}$, called the \textit{torsion radical} and the \textit{torsion-free functor}, respectively.}
	
	In the next theorem and throughout, {given a ${\Lambda}$-module $M$, let} $\gen(M)$ denote the full subcategory of $\fgMod {\Lambda}$ containing the ${\Lambda}$-modules $X$ {for which} there exists an epimorphism $M^{\oplus n}\epi X$ for some $n\geq 1$.
	
	\begin{theorem}[{\cite[Theorem~2.7]{AIR14}}]\label{thm:AIR14.2.7}
		Let $M$ be a support $\tau$-tilting ${\Lambda}$-module. Then {the pair $(\gen(M),M^{\perp})$ is a torsion pair in $\fgMod {\Lambda}$.} Moreover, the torsion class $\gen(M)$ is functorially finite in $\fgMod {\Lambda}$, and we have a bijection
		\begin{equation*}
		\gen\colon \stautilt({\Lambda})\to \ftors({\Lambda})
		\end{equation*}
		from the set of {basic} support $\tau$-tilting ${\Lambda}$-modules
 to the set of functorially finite torsion classes in $\fgMod {\Lambda}$. 
	\end{theorem}
	
	Since the set of torsion classes is partially ordered under inclusion, this bijection induces a partial order on $\stautilt({\Lambda})$ and $\stautiltpair({\Lambda})$. More explicitly, we impose that $M\geq N$ if $\gen(M)\supseteq\gen(N)$. If $(M,Q)$ and $(N,R)$ are {basic} support $\tau$-tilting pairs, we say that $(M,Q)\geq (N,R)$ if $M\geq N$ as {basic} support $\tau$-tilting modules.
	
	\begin{theorem}[{\cite[Proposition~4.5]{Jas15}, \cite[Proposition~6.2(3)]{IY08}}]\label{IY08.6.2}
		Let $\c$ be Hom-finite Krull--Schmidt triangulated category and ${ S}\in\c$ be a silting object. Let $\add({ S})*\Sigma\add({ S})$ be the subcategory of $2_{ S}$-term objects in $\c$. The functor 
		\begin{equation*}
		\c({ S},-)\colon \c \to \fgMod\End_{\c}({ S})
		\end{equation*}
		induces an equivalence of categories
		\begin{equation*}
		\c({ S},-)\colon {\add({ S})*\Sigma\add({ S})\over [\Sigma { S}]} \to \fgMod\End_{\c}({ S}),
		\end{equation*}
		where $[\Sigma { S}]$ is the {ideal of $\c$ consisting} of morphisms factoring {through} ${\add(\Sigma S)}$.
	\end{theorem}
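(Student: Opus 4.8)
The plan is to abbreviate $A=\End_{\T}(S)$ and $F=\T(S,-)$. Being corepresentable, $F$ is a cohomological functor, and since $F(S)=A$ as a right $A$-module it restricts to the standard equivalence $F\colon\add(S)\xrightarrow{\ \sim\ }\proj(A)$; here I use that $\T$ is Hom-finite, so that $A$ is finite dimensional and $\fgMod(A)$ is the category of finitely presented right $A$-modules. Because $S$ is presilting, $F(\Sigma^{n}S)=\T(S,\Sigma^{n}S)=0$ for all $n>0$, so $F$ annihilates every object of $\add(\Sigma S)$, hence every morphism in $[\Sigma S]$; thus $F$ descends to a functor $\bar F\colon\tfrac{\add(S)*\Sigma\add(S)}{[\Sigma S]}\to\fgMod(A)$, and it suffices to prove that $\bar F$ is essentially surjective, full and faithful.

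For essential surjectivity, given $M\in\fgMod(A)$ I would choose a presentation $Q_{1}\xrightarrow{\,g\,}Q_{0}\to M\to 0$ by finitely generated projectives, transport $g$ through the equivalence $\add(S)\simeq\proj(A)$ to a morphism $S^{1}\to S^{0}$ in $\add(S)$, and complete it to a triangle $S^{1}\to S^{0}\to P\to\Sigma S^{1}$. Then $P$ is $2_{S}$-term by construction, and applying the cohomological functor $F$ to this triangle, together with $F(\Sigma S^{1})=F(\Sigma S^{0})=0$, identifies $F(P)$ with $\operatorname{coker}(F(g))\cong M$.

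For fullness and faithfulness, fix $2_{S}$-term objects $P,P'$ and a presentation triangle $S^{1}\xrightarrow{\,f\,}S^{0}\xrightarrow{\,a\,}P\xrightarrow{\,w\,}\Sigma S^{1}$ for $P$. Applying $\T(-,P')$ yields the exact sequence
\[
\T(\Sigma S^{1},P')\xrightarrow{\,-\circ w\,}\T(P,P')\xrightarrow{\,-\circ a\,}\T(S^{0},P')\xrightarrow{\,-\circ f\,}\T(S^{1},P').
\]
Since $S^{0},S^{1}\in\add(S)$, applying $F$ gives natural isomorphisms $\T(S^{i},P')\cong\Hom_{A}(FS^{i},FP')$, under which $-\circ f$ becomes $\Hom_{A}(F(f),FP')$; its kernel is $\Hom_{A}(FP,FP')$, realised inside $\Hom_{A}(FS^{0},FP')$ by precomposition with the epimorphism $F(a)\colon FS^{0}\twoheadrightarrow FP=\operatorname{coker}F(f)$. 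One checks that under these identifications $-\circ a$ is exactly $F\colon\T(P,P')\to\Hom_{A}(FP,FP')$ followed by that monomorphism. Exactness of the sequence then gives, first, that $F$ is surjective on $\T(P,P')$, so $\bar F$ is full; and second, that $\ker\!\big(F\colon\T(P,P')\to\Hom_{A}(FP,FP')\big)=\operatorname{im}(-\circ w)$. As $w$ factors through $\Sigma S^{1}\in\add(\Sigma S)$, this image lies in $[\Sigma S](P,P')$, and the opposite inclusion is clear from the first paragraph; hence $\bar F$ induces a bijection $\tfrac{\T(P,P')}{[\Sigma S](P,P')}\xrightarrow{\ \sim\ }\Hom_{A}(FP,FP')$. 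Being also essentially surjective, $\bar F$ is an equivalence.

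The step I expect to be most delicate is the last paragraph's bookkeeping: one has to set up the identification $\T(S^{0},P')\cong\Hom_{A}(FS^{0},FP')$ carefully enough to see that the connecting map $-\circ a$ of the long exact sequence really is $F$ followed by a monomorphism — which is what makes the Hom-space bijections assemble into the functor $\bar F$ rather than a mere family of isomorphisms — and one must verify that $\operatorname{im}(-\circ w)$ equals $[\Sigma S](P,P')$ and not merely a subspace of it; it is here that the presilting hypothesis on $S$, through $F(\Sigma S)=0$, does the essential work.
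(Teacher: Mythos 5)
Your argument is correct and complete. The paper does not prove this statement at all — it is quoted with citations to Iyama--Yoshino and Jasso — and your proof is essentially the standard one from those sources: projectivization $\add(S)\simeq\proj(\End_\T(S))$, essential surjectivity via lifting a projective presentation to a triangle, and full faithfulness of the induced functor on the quotient by applying $\T(-,P')$ to a presentation triangle and using that $\T(S,\Sigma S)=0$ identifies the kernel of $\T(P,P')\to\Hom_A(FP,FP')$ with $[\Sigma S](P,P')$. The only cosmetic point is that the essential surjectivity of $\add(S)\to\proj(A)$ rests on idempotent splitting, i.e.\ the Krull--Schmidt hypothesis, rather than Hom-finiteness as you say (or one can sidestep it by using free presentations $A^{n_1}\to A^{n_0}\to M\to 0$); since Krull--Schmidt is assumed, this is not a gap.
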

	
	The close connection between silting and support $\tau$-tilting is expressed by the following theorem.
	\begin{theorem}[{\cite[Theorem~4.5]{IJY14}}]\label{IJY14.4.6}
		Let $\c$ and ${S}$ be as in \Cref{IY08.6.2} and let ${\Lambda}=\End_{\c}({ S})$. We have a bijection
		\begin{equation*}
		\begin{tikzcd}
		\npresilt{2}{ S}(\c)\arrow[r,"H_{ S}"] & \staurigidpair({\Lambda}), \\
		{ X}\arrow[u,symbol=\in]\arrow[r,mapsto] & (\c({ S},{ X}),\c({ S},{ X}_1))\arrow[u,symbol=\in]
		\end{tikzcd}
		\end{equation*}
		where $\Sigma { X}_1$ is the maximal direct summand of ${ X}$ in $\Sigma{ S}$. It restricts to bijections
		\begin{center}
			\begin{tikzcd}
				\npresilt{2}{ S}_{ P}(\c)\arrow[r,"H_{ S}"] & \staurigidpair_{H_{ S}({ P})}({\Lambda}),
			\end{tikzcd}
		\end{center}
		and 
		\begin{equation}\label{eq:lastbij}
		\begin{tikzcd}
		\nsilt{2}{ S}_{ P}(\c)\arrow[r,"H_{ S}"] & \stautiltpair_{H_{ S}({ P})}({\Lambda}),
		\end{tikzcd}
		\end{equation}
		for each ${ P}\in\npresilt{2}{ S}(\c)$. The last bijection is an isomorphism of partially ordered sets.
	\end{theorem}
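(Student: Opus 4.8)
The plan is to reduce every assertion to the additive equivalence
$F\coloneqq\T(S,-)\colon \tfrac{\add(S)*\Sigma\add(S)}{[\Sigma S]}\xrightarrow{\ \sim\ }\fgMod(A)$ of \Cref{IY08.6.2}, supplemented by an Auslander--Reiten-type formula converting $\Hom(-,\tau-)$ in $\fgMod(A)$ into $\Hom(-,\Sigma-)$ in $\T$ (as in \cite{AIR14,Jas15}). Throughout, for a $2_S$-term object $X$ I would fix the decomposition $X=X_0\oplus\Sigma X_1$ with $X_1\in\add(S)$ and $X_0$ having no nonzero direct summand in $\Sigma\add(S)$, and record the elementary consequences of $S$ and $X$ being presilting: $\T(S,\Sigma X_1)=0$ and $\T(Z,\Sigma S)=0$ for every $Z\in\add(S)$, so that $\T(S,X)=\T(S,X_0)$ and $F$ identifies $\Hom_A(\T(S,Z),\T(S,X))$ with $\T(Z,X)$ for such $Z$; and $\T(\Sigma X_1,\Sigma^{n}X_0)=0$ for $n\geq 1$, since $\Sigma X_1$ and $X_0$ are direct summands of the presilting object $X$.

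First I would check that $H_S$ is well defined, i.e.\ that $(\T(S,X),\T(S,X_1))$ is a support $\tau$-rigid pair. Projectivity of $\T(S,X_1)$ is immediate from $X_1\in\add(S)$ and $F$. For $\Hom_A(\T(S,X_1),\T(S,X))=0$ one uses the identification above: this group is $\T(X_1,X)=\T(X_1,X_0)\oplus\T(X_1,\Sigma X_1)$, where the first summand vanishes because $\T(\Sigma X_1,\Sigma X_0)$ is a direct summand of $\T(X,\Sigma X)=0$, and the second vanishes since $X_1\in\add(S)$ is presilting. That $\T(S,X)=\T(S,X_0)$ is $\tau$-rigid is the substantial point: by the Auslander--Reiten-type formula, $\Hom_A(\T(S,X_0),\tau\T(S,X_0))$ is (the $k$-dual of) a subquotient of $\T(X_0,\Sigma X_0)$, which is $0$ because $X_0$ is presilting. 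Establishing this formula — transporting the classical AR-duality for $\fgMod(A)$ through $F$ and the presentation triangle $S^1\to S^0\to X_0\to\Sigma S^1$ — is the step I expect to be the main obstacle; everything else is bookkeeping around $F$.

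For bijectivity I would argue as follows. Injectivity: if $H_S(X)=H_S(X')$, then $F$ gives an isomorphism $X_0\cong X_0'$ in the ideal quotient, and since neither object has a summand in $\Sigma\add(S)$ a standard Krull--Schmidt argument (maps in $[\Sigma S]$ between such objects lie in the radical of the relevant endomorphism algebra, hence do not obstruct invertibility) lifts it to an isomorphism $X_0\cong X_0'$ in $\T$; meanwhile $\T(S,X_1)\cong\T(S,X_1')$ forces $X_1\cong X_1'$ via the equivalence $\T(S,-)\colon\add(S)\xrightarrow{\sim}\proj(A)$, so $X\cong X'$. Surjectivity: given $(M,Q)$, choose $X_1\in\add(S)$ with $\T(S,X_1)\cong Q$; take a minimal projective presentation $P^{-1}\xrightarrow{g}P^0\to M\to 0$, lift $g$ along $\T(S,-)\colon\add(S)\xrightarrow{\sim}\proj(A)$ to $f\colon S^1\to S^0$, and set $X_0\coloneqq\operatorname{cone}(f)$. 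The long exact sequence together with $\T(S,\Sigma S^1)=0$ gives $\T(S,X_0)\cong\operatorname{coker}(g)=M$; minimality of the presentation ensures $X_0$ has no summand in $\Sigma\add(S)$; the converse direction of the Auslander--Reiten-type formula and $\tau$-rigidity of $M$ yield $\T(X_0,\Sigma X_0)=0$, so $X_0$ is presilting; and $\Hom_A(Q,M)=0$ kills the remaining cross-terms, so that $X\coloneqq X_0\oplus\Sigma X_1$ is $2_S$-term presilting with $H_S(X)=(M,Q)$.

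The restriction to $\npresilt{2}{S}_P(\T)\to\staurigidpair_{H_S(P)}(A)$ is then formal, as both $H_S$ and its inverse respect direct-sum decompositions. For \eqref{eq:lastbij} I would use the rank bookkeeping $|X|=|X_0|+|X_1|=|\T(S,X)|+|\T(S,X_1)|$ and $|S|=|A|$ together with \cite[Lemma~4.3]{IJY14}, so that $X$ is $2_S$-term silting precisely when $H_S(X)$ is support $\tau$-tilting. Finally, the order isomorphism follows by identifying $H_S$ on the silting level with passage to functorially finite torsion classes: $X\geq X'$ means by definition $\T(X,\Sigma X')=0$, which via $F$ and the standard dictionary translates into $\Gen(\T(S,X))\supseteq\Gen(\T(S,X'))$, and the latter is exactly the defining relation $\T(S,X)\geq\T(S,X')$ on support $\tau$-tilting modules.
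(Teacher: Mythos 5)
This theorem is not proved in the paper at all: it is quoted from Iyama--J{\o}rgensen--Yang \cite[Theorem~4.5]{IJY14} (whose module-category prototype is \cite[Theorem~3.2]{AIR14}), so there is no in-paper argument to compare yours with. Judged on its own terms, your sketch follows exactly the standard route of those papers: decompose $X=X_0\oplus\Sigma X_1$, identify Hom-spaces through the equivalence of \Cref{IY08.6.2}, convert $\tau$-rigidity of $\T(S,X_0)$ into the vanishing of $\T(X_0,\Sigma X_0)$ via an Auslander--Reiten-type formula, prove injectivity by lifting isomorphisms through the ideal quotient (your observation that $[\Sigma S](X_0,X_0)\subseteq\operatorname{rad}\End_{\T}(X_0)$ when $X_0$ has no summand in $\Sigma\add(S)$ is the right one), prove surjectivity by lifting a minimal projective presentation along $\add(S)\simeq\proj(A)$ and taking a cone, and get the order isomorphism via torsion classes. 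The outline and the bookkeeping (vanishing of cross-terms, rank count via \cite[Lemma~4.3]{IJY14}) are correct.

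The one substantive gap is the step you yourself flag: the AR-type formula is asserted, not established, and it carries essentially all the content of the theorem, including the order statement at the end. In this generality it takes the form: for $2_S$-term objects $X,Y$ without summands in $\Sigma\add(S)$, applying $\T(-,\Sigma Y)$ to a presentation triangle $S^1\to S^0\to X\to\Sigma S^1$ and using $\T(S,\Sigma^iS)=0$ for $i\geq 1$ gives $\T(X,\Sigma Y)\cong\operatorname{coker}\bigl(\T(S^0,Y)\to\T(S^1,Y)\bigr)\cong\operatorname{coker}\bigl(\Hom_A(P^0,N)\to\Hom_A(P^1,N)\bigr)$, and the latter is the $k$-dual of $\Hom_A(N,\tau M)$ by the classical AR formula, because absence of $\Sigma\add(S)$-summands forces the induced presentation $P^1\to P^0\to M\to 0$ to be minimal up to contractible summands; this last dictionary is exactly the unproved minimality claim you also lean on in the surjectivity step. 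Your proposed derivation (``transport classical AR duality through $F$ and the presentation triangle'') is precisely this computation, so the plan is sound, but as written the proof reduces the theorem to an unproven key lemma rather than completing it. A small imprecision worth fixing: $\Hom_A(\T(S,X_1),\T(S,X))$ is a Hom-space in the ideal quotient, hence a quotient of $\T(X_1,X)$ rather than $\T(X_1,X)$ itself; your argument survives because you show the full space vanishes (indeed $[\Sigma S](X_1,X)=0$ since $X_1\in\add(S)$ and $S$ is presilting).
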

	
		
	The isomorphism in \eqref{eq:lastbij} sends the Bongartz completion of ${{ P}{\in \npresilt{2}{S}(\c)}}$ to a {unique} maximal {element in} $\stautiltpair_{H_{ S}({ P})}({\Lambda})$. We refer to this as the \textit{Bongartz completion} of the $\tau$-rigid pair $H_{ S}({ P})$. A module-theoretic construction of the Bongartz completion of $\tau$-rigid pairs is available \cite[Theorem~2.10]{AIR14}, \cite[Theorem~4.4]{DIRRT17}. We often denote the Bongartz completion of the $\tau$-rigid pair $(M,Q)$ by $(M^+,Q)$, noting that the second component $Q$ remains unaltered.
	
	\begin{proposition-definition}[{\cite[Proposition~3.6]{Jas15}, \cite[Theorem~4.12(a)]{DIRRT17}, {\cite[Definition 3.2]{BH21}}}]\label{Jas15.3.6}
		Let ${\Lambda}$ be a finite dimensional $k$-algebra, and let $(M,Q)$ be a $\tau$-rigid pair in $\fgMod {\Lambda}$. {The pair $(M,Q)$} determines a wide subcategory 
		\begin{equation*}
		J(M,Q) \defeq M^{\perp}\cap {^{\perp}(\tau M)}\cap Q^{\perp}\subseteq \fgMod {\Lambda}
		\end{equation*}
		called the \textit{$\tau$-perpendicular category} of $(M,Q)$. {A wide subcategory $W$ of $\fgMod {\Lambda}$ is a \textit{$\tau$-perpendicular wide subcategory} if it is the $\tau$-perpendicular category of some $\tau$-rigid pair in $\fgMod {\Lambda}$.}
	\end{proposition-definition}

	The next result shows that $\tau$-perpendicular categories are module categories.
		
	\begin{proposition-definition}[{\cite[Theorem~3.8]{Jas15}},{\cite[Theorem~4.12(b)]{DIRRT17}}]\label{DIRRT17.4.12}
		Let $(M,Q)$ be a $\tau$-rigid pair in $\fgMod {\Lambda}$, let $(M^+,Q)$ be the Bongartz completion of $(M,Q)$, and let ${\Gamma}=\End_{A}(M^+)/[M]$. 
		We then have an exact equivalence
		\begin{equation*}
		F_{(M,Q)}\defeq\Hom_{{\Lambda}}(M^+,-)\colon J(M,Q)\to\fgMod {\Gamma}.
		\end{equation*}
		We will refer to the algebra ${\Gamma}$ as the \textit{$\tau$-tilting reduction} of ${\Lambda}$ with respect to $(M,Q)$.
		
		Let $J(M,Q)$ be the $\tau$-perpendicular category of a $\tau$-rigid pair $(M,Q)$ with Bongartz completion $(M^+,Q)$, and let ${\Gamma}=\End_{A}(M^+)/[M]$, where $[M]$ is the ideal of morphisms factoring through $\add(M)$. A \textit{$\tau$-rigid pair in $J(M,Q)$} is a pair $(U,R)$ in $J(M,Q)$ such that {$(F_{(M,Q)}U,F_{(M,Q)}R)$ is a $\tau$-rigid pair in $\fgMod \Gamma$, where $F_{(M,Q)}$ is the exact equivalence in \Cref{DIRRT17.4.12}. The set of {basic $\tau$-rigid pairs in $J(M,Q)$} will be denoted by $\staurigidpair(J(M,Q))$.}
	\end{proposition-definition}
	
{By construction, it follows that any $\tau$-rigid pair $(M,Q)$ in $\fgMod A$ provides a bijection
\begin{equation}\label{eq:FMQbij}
	F_{(M,Q)}\colon \staurigidpair(J(M,Q)) \to  \staurigidpair({\Gamma}),
\end{equation}
where ${\Gamma}$ is {the {$\tau$-tilting reduction} of ${\Lambda}$ with respect to $(M,Q)$.}}
	
	\section{Silting t-structures and reduction}\label{sec:silttstr}
	
	Silting reduction is a well-developed reduction technique, particularly for Hom-finite Krull--Schmidt triangulated categories \cite{IY18}. The bijections in question restrict to two-term objects and are compatible with $\tau$-tilting reduction \cite[Theorem~4.12]{Jas15}. In this section, we study the interplay between silting reduction and t-structures. Iyama--Yang prove the following theorem, though in greater generality.
	
	\begin{theorem}[{\cite[Theorems~3.1, 3.6, and 3.7]{IY18}, \cite[Proposition 4.11]{Jas15}}]\label{IY18.3.7}
		Let $\c$ be a Hom-finite Krull--Schmidt triangulated category containing a silting object ${ S}$. Let ${ P}$ be a presilting object.
		\begin{enumerate} 
			\item\label{IY.3.6} Let $\z_{ P}={^{\perp_{>0}}P}\cap {P^{\perp_{>0}}}$. Then the composite
			\begin{equation*}
			\begin{tikzcd}
			\z_{ P}\arrow[r,hook] & \c \arrow[r,"{\rm loc}"] & \c/\thick({ P})
			\end{tikzcd}
			\end{equation*}
			induces a triangle equivalence $\z_{ P}/[{ P}]\to \c/\thick({ P})$, where the triangulation on $\z_{ P}/[{ P}]$ is described by Iyama--Yoshino \cite[Theorem~4.2]{IY08}. {The inverse of the suspension of $X\in \z_{ P}/[{ P}]$ is denoted by $X\langle -1\rangle$, and is defined by forming a triangle
			\begin{equation}\label{eq:langle_tri}
			X\langle -1\rangle \to P_X\to^{\beta_X} X\to \Sigma X\langle -1\rangle
			\end{equation}
			in which $\beta_X$ is a right $\add(P)$-approximation.
			}
			\item\label{IY18.3.7.i} The additive quotient $\z_P \to \z_P/[P]$ (and also the Verdier localisation $\c\to \c/\thick(P)$) induces a bijection 
			\begin{equation}\label{presiltbij}
			\varphi_{ P}\colon \presilt_{ P}(\c) \to \presilt(\z_{ P}/[{ P}])\cong\presilt(\c/\thick({ P})).
			\end{equation}
			If ${P}$ is {basic and} $2_{S}$-term, and has Bongartz completion $T^{+}_P$, it restricts to a bijection
			\begin{equation}\label{2presiltbij}
			\varphi_{ P}\colon \npresilt{2}{{ S}}_{ P}(\c) \to \npresilt{2}{{ T^{+}_P}}(\z_{ P}/[{ P}])\cong\npresilt{2}{{ T^+_P}}(\c/\thick({ P})).
			\end{equation}
			\item\label{IY18.3.7.ii} The bijection \eqref{presiltbij} restricts to an isomorphism of posets 
			\begin{equation*}
			\varphi_{ P}\colon \silt_{ P}(\c) \to \silt(\z_{ P}/[{ P}])\cong\silt(\c/\thick({ P})),
			\end{equation*}
			which, if ${P}$ is {basic and} $2_{S}$-term, further restricts to an isomorphism of posets
			\begin{equation}\label{presiltbij2}
			\varphi_{P}\colon \nsilt{2}{ S}_{P}(\c) \to \nsilt{2}{{ T^{+}_P}}(\z_{ P}/[{ P}]) \cong \nsilt{2}{{ T^{+}_P}}(\c/\thick({ P})).
			\end{equation}
		\end{enumerate}
	\end{theorem}
	
	Since the bijection in \eqref{presiltbij} is induced by the ideal quotient $\z_{ P}\to \z_{ P}/[{ P}]$, our next two lemmas are immediate consequences.
	\begin{lemma}\label{lem:indsiltred}
		Let $\c$ {be} as in \Cref{IY18.3.7}, and let ${ P}$ be a {basic} $2_{ S}$-term presilting object in $\c$. We define $\ind_{ P}\npresilt{2}{ S}_{ P}(\c)$ to be the objects in $\npresilt{2}{ S}_{ P}(\c)$ of the form ${ P}\oplus { X}$, where ${ X}$ is an indecomposable object which is not contained in $\add(P)$.
		Then the bijection in \eqref{presiltbij} restricts to a bijection
		\begin{equation*}
		\varphi_{ P}\colon \ind_{ P}\npresilt{2}{{ S}}_{ P}(\c) \to \ind\npresilt{2}{{ T^{+}_P}}(\z_{ P}/[{ P}]),
		\end{equation*}	
		where the codomain is the set of indecomposable objects in $\npresilt{2}{{ T^{+}_P}}(\z_{ P}/[{ P}])$.
	\end{lemma}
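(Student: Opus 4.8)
The statement to be proved is \Cref{lem:indsiltred}: the Iyama--Yang bijection $\varphi_{P}$ of \eqref{2presiltbij} restricts to a bijection between the subset $\ind_{P}\npresilt{2}{S}_{P}(\T)$ of objects of the form $P\oplus X$ with $X$ indecomposable and $X\notin\add(P)$, and the set of indecomposable objects of $\npresilt{2}{T_P}(\z_{P}/[P])$. The plan is to exploit the fact, already emphasised in the sentence preceding the statement, that $\varphi_{P}$ is induced by the ideal quotient functor $q\colon\z_{P}\to\z_{P}/[P]$, so that everything reduces to tracking indecomposability and direct summands through $q$.

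First I would recall from \Cref{IY18.3.7}\eqref{2presiltbij} that $\varphi_{P}$ sends a $2_{S}$-term presilting object $P\oplus X\in\npresilt{2}{S}_{P}(\T)$ (with $X\in\z_{P}$ having no summand in $\add(P)$) to the object $q(X)=q(P\oplus X)$ in $\npresilt{2}{T_P}(\z_{P}/[P])$; note $q(P)=0$ in the ideal quotient. So the content is: (i) if $X\in\z_{P}$ is indecomposable and $X\notin\add(P)$, then $q(X)$ is indecomposable in $\z_{P}/[P]$; and (ii) conversely every indecomposable object $Y$ of $\npresilt{2}{T_P}(\z_{P}/[P])$ is of the form $q(X)$ for a unique (up to iso) indecomposable $X\in\z_{P}$, $X\notin\add(P)$. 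For (i): since $\T$ is Hom-finite Krull--Schmidt, $\End_{\T}(X)$ is local; in the ideal quotient $\End_{\z_{P}/[P]}(q(X))=\End_{\z_{P}}(X)/[P](X,X)$ is a quotient of a local ring, hence local or zero, and it is nonzero exactly because $X\notin\add(P)$ (otherwise $\mathrm{id}_X$ would factor through $\add P$). A local endomorphism ring forces $q(X)$ indecomposable; and $q(X)\not\cong 0$ because $\varphi_P$ is a bijection onto presilting objects and $P\oplus X\neq P$ in the source. For (ii): given an indecomposable $Y$ in the target, $\varphi_{P}$ being a bijection produces a unique basic $2_{S}$-term presilting object $P\oplus X\in\npresilt{2}{S}_{P}(\T)$ with $q(P\oplus X)=Y$; decompose $X=\bigoplus X_i$ into indecomposables not in $\add P$, so $Y\cong\bigoplus q(X_i)$, each $q(X_i)$ indecomposable by (i) and nonzero; since $Y$ is indecomposable in the Krull--Schmidt category $\z_{P}/[P]$, there is exactly one summand, i.e. $X=X_1$ is indecomposable; uniqueness of $X$ follows from the injectivity of $\varphi_{P}$ together with Krull--Schmidt uniqueness of the decomposition of $P\oplus X$.

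To make this airtight I would first note that $\z_{P}/[P]$ is a Hom-finite Krull--Schmidt triangulated category (it is triangle equivalent to $\T/\thick(P)$ by \Cref{IY18.3.7}\eqref{IY.3.6}, which is Hom-finite Krull--Schmidt as a quotient of $\T$ — this is part of the Iyama--Yang package), so that ``indecomposable $\Leftrightarrow$ local endomorphism ring'' is available on both sides and decompositions into indecomposables are essentially unique. Then the proof is just: well-definedness of the restriction ($X$ indecomposable, $X\notin\add P$ $\Rightarrow$ $q(X)$ indecomposable, by the local-quotient argument above), injectivity (inherited from $\varphi_{P}$, noting the basic representative $P\oplus X$ is determined by $q(X)=\varphi_P(P\oplus X)$), and surjectivity (pull an indecomposable $Y$ back along $\varphi_{P}$ to $P\oplus X$, and use that $Y$ indecomposable forces $X$ indecomposable, since $q$ is additive and $q(P)=0$).

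The main obstacle — really the only nontrivial point — is the implication ``$X$ indecomposable and not in $\add(P)$ $\Rightarrow$ $q(X)$ indecomposable and nonzero in $\z_{P}/[P]$,'' i.e. controlling what the ideal quotient by $[P]$ does to endomorphism rings. The nonvanishing of $q(X)$ is the subtle half: a priori an indecomposable $X\notin\add P$ could still become zero in $\T/\thick(P)$. This is precisely ruled out by the fact that $\varphi_{P}$ in \eqref{2presiltbij} is a \emph{bijection} $\npresilt{2}{S}_{P}(\T)\to\npresilt{2}{T_P}(\z_{P}/[P])$ taking $P\oplus X\mapsto q(X)$: if $q(X)=0$ then $q(P\oplus X)=q(P)$, contradicting injectivity since $P\oplus X\neq P$ when $X\notin\add P$. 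So once one records that $\varphi_{P}$ is bijective with the stated formula (\Cref{IY18.3.7}) and that both categories are Krull--Schmidt, the indecomposable case drops out by the local-endomorphism-ring characterisation, and there is no further calculation to do.
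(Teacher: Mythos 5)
Your argument is correct and is exactly the elaboration the paper has in mind: the paper offers no written proof, simply declaring the lemma an immediate consequence of the fact that $\varphi_P$ is induced by the ideal quotient $\z_P\to\z_P/[P]$, and your local-endomorphism-ring bookkeeping (quotient of a local ring by the proper ideal $[P](X,X)$, plus injectivity of $\varphi_P$ and additivity of the quotient functor) is the standard way to make that immediacy precise.
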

	
	\begin{lemma}\label{lem:compphi}
		Let $P$ and $Q$ be {basic} $2_S$-term presilting objects in $\c$ such that $P\oplus Q$ also is {basic} $2_S$-term presilting. 
		The bijection
		\begin{equation*}
		\varphi_{ P}\colon \npresilt{2}{ S}_{ P}(\c) \to \npresilt{2}{{ T^{+}_P}}(\z_{ P}/[{ P}])
		\end{equation*}
		restricts to a bijection
		\begin{equation*}
		\varphi_{ P}\colon\npresilt{2}{ S}_{{ P}\oplus {Q}}(\c) \to \npresilt{2}{{ T^{+}_P}}_{\varphi_{P}({P\oplus}Q)}(\z_{ P}/[{ P}]).
		\end{equation*}
	\end{lemma}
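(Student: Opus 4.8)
The plan is to deduce Lemma~\ref{lem:compphi} directly from the fact, recalled before Lemma~\ref{lem:indsiltred}, that the bijection \eqref{presiltbij} of \Cref{IY18.3.7} is induced by the ideal quotient functor $\z_P\to\z_P/[P]$. Concretely, for $X\in\npresilt{2}{S}_P(\T)$ the object $\varphi_P(X)$ is (the image in $\z_P/[P]$ of) a complement of $P$ inside $X$; that is, if $X\cong P^{a}\oplus X'$ with $X'$ having no summand in $\add(P)$, then $\varphi_P(X)$ is the image of $X'$. The key point is simply that $\varphi_P$, being induced by an additive functor, respects direct sums, so it turns the operation ``add the summand $Q$'' into the operation ``add the summand $\varphi_P(Q)$''.

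First I would fix $Q$ a $2_S$-term presilting object with $P\oplus Q$ again $2_S$-term presilting, and observe that $Q\in\z_P$: indeed $P$ is presilting and $\T(P,\Sigma^{>0}Q)=0=\T(Q,\Sigma^{>0}P)$ because $P\oplus Q$ is presilting, so $Q\in{^{\perp_{>0}}P}\cap P^{\perp_{>0}}=\z_P$. Hence $\varphi_P(Q)$ makes sense as the image of $Q$ under the quotient $\z_P\to\z_P/[P]$. Next, take any $Y\in\npresilt{2}{S}_{P\oplus Q}(\T)$; writing $Y\cong (P\oplus Q)\oplus Y'$, the image of $Y$ in $\z_P/[P]$ is $\varphi_P(P\oplus Q\oplus Y')=\varphi_P(Q)\oplus\varphi_P(Y')$ since the ideal quotient is additive and kills $P$. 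Therefore $\varphi_P(Q)$ is a direct summand of $\varphi_P(Y)$, so $\varphi_P(Y)\in\npresilt{2}{T_P}_{\varphi_P(Q)}(\z_P/[P])$. This shows $\varphi_P$ maps the asserted domain into the asserted codomain.

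For the reverse inclusion I would argue on the inverse bijection $\varphi_P^{-1}$, which by \Cref{IY18.3.7} is realised by the section ${\rm loc}\colon\z_P/[P]\to\z_P\hookrightarrow\T$ followed by Bongartz-style completion with $P$; more simply, $\varphi_P^{-1}(Z)$ is the $2_S$-term presilting object of $\T$ containing $P$ whose quotient image is $Z$. Given $Z\in\npresilt{2}{T_P}_{\varphi_P(Q)}(\z_P/[P])$, write $Z\cong\varphi_P(Q)\oplus Z'$; then $\varphi_P^{-1}(Z)$ contains $P$ and, applying $\varphi_P^{-1}$ summand-wise (using that $\varphi_P^{-1}$ is a bijection compatible with the additive structure, i.e.\ $\varphi_P^{-1}(A\oplus B)=\varphi_P^{-1}(A)\oplus\varphi_P^{-1}(B)$ whenever all terms lie in the relevant sets, which follows from \Cref{lem:indsiltred} applied to indecomposable summands together with the Krull--Schmidt property), it contains both $P$ and $\varphi_P^{-1}(\varphi_P(Q))=Q$. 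Hence $\varphi_P^{-1}(Z)\in\npresilt{2}{S}_{P\oplus Q}(\T)$, completing the proof that the restriction is a bijection.

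The main obstacle is making precise and justifying the slogan ``$\varphi_P$ respects direct summands''. The subtlety is that $\varphi_P$ is defined on \emph{equivalence classes} of presilting objects (under $\add(-)$) and the ideal quotient can merge or split indecomposables, so one must check that for $X\in\npresilt{2}{S}_P(\T)$ a summand decomposition $X\cong P\oplus Q\oplus X'$ really does produce $\varphi_P(X)\cong\varphi_P(Q)\oplus\varphi_P(X')$ with $\varphi_P(Q)$ \emph{genuinely} a summand (and not, say, identified with $0$ or absorbed into $\varphi_P(X')$). This is exactly where $Q$ being $2_S$-term presilting and having no summand killed by the quotient is used, and it can be extracted cleanly from \Cref{lem:indsiltred}: the indecomposable summands of $Q$ not in $\add(P)$ correspond bijectively to distinct indecomposables in $\npresilt{2}{T_P}(\z_P/[P])$, and none of them can coincide with an indecomposable summand of $\varphi_P(X')$ because $\varphi_P$ is injective on the set of summands of a fixed presilting object. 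Once this bookkeeping is in place, both inclusions above are immediate and the lemma follows.
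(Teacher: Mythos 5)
Your proposal is correct and follows essentially the same route as the paper, which offers no separate argument for this lemma: it simply observes (just before \Cref{lem:indsiltred}) that $\varphi_P$ is induced by the additive ideal quotient $\z_P\to\z_P/[P]$, so both restriction statements are immediate. Your fleshing-out of that remark --- checking $Q\in\z_P$, using additivity of the quotient for the forward inclusion, and handling the inverse via \Cref{lem:indsiltred}, injectivity on indecomposables, and the Krull--Schmidt property (with the identification of presilting objects up to $\add$ taking care of multiplicities of $P$) --- is exactly the intended justification.
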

	
	\begin{lemma}\label{lem:functorphi}
		Let ${ P}\oplus { Q}$ be a {basic} $2_{ S}$-term presilting object in $\c$. Then the diagram
		\begin{equation}\label{eq:itphi}
		\begin{tikzcd}[row sep=3em,column sep=6em]
		\npresilt{2}{ S}_{{ P}\oplus { Q}}(\c)\arrow[d,"\varphi_{ Q}" description]\arrow[r,"\varphi_{ P}" description]\arrow[dr,"\varphi_{{ P}\oplus { Q}}" description]  & \npresilt{2}{ T^{+}_P}_{\varphi_{P}({P\oplus}Q)}(\z_{ P}/[{ P}])\arrow[d,"\varphi_{\varphi_{P}({ Q})}" description] \\
		\npresilt{2}{ T^+_Q}_{\varphi_{Q}(P{\oplus Q})}(\z_{Q}/[{Q}]) \arrow[r,"\varphi_{\varphi_Q({ P})}" description] & \npresilt{2}{{T}^{+}_{{ P}\oplus { Q}}}(\z_{{ P}\oplus { Q}}/[{ P}\oplus { Q}])
		\end{tikzcd}
		\end{equation}
		commutes.
	\end{lemma}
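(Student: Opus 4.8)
The strategy is to reduce the commutativity of the square \eqref{eq:itphi} to a statement about the Verdier quotient $\T/\thick(P\oplus Q)$, where everything collapses to the identity. Recall from \Cref{IY18.3.7}\eqref{IY.3.6} that, for any presilting object $R$, the inclusion $\z_R\hookrightarrow\T$ followed by the localisation functor induces a triangle equivalence $\z_R/[R]\to\T/\thick(R)$, and that the bijection $\varphi_R$ of \eqref{presiltbij} is precisely the map induced on presilting objects by the ideal quotient $\z_R\to\z_R/[R]$. The key observation is therefore that all four maps in the diagram, as well as the two one-step composites and the diagonal, are (up to the canonical equivalences $\z_R/[R]\cong\T/\thick(R)$) nothing but the localisation functor restricted to two-term presilting objects lying in the appropriate subcategory; so it suffices to show that the two ways around the square both compute the localisation $\T\to\T/\thick(P\oplus Q)$.

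First I would fix an object $X\in\npresilt{2}{S}_{P\oplus Q}(\T)$ and trace it around both paths. Going right then down: $\varphi_P$ sends $X$ to its image $\bar X$ in $\z_P/[P]\cong \T/\thick(P)$; by \Cref{lem:compphi}, $\bar X\in\npresilt{2}{T_P}_{\varphi_P(Q)}(\z_P/[P])$. Applying $\varphi_{\varphi_P(Q)}$ then passes to the further quotient by $\thick(\varphi_P(Q))$, i.e. to $\bigl(\z_P/[P]\bigr)\big/\thick(\varphi_P(Q))\cong \z_{P\oplus Q}/[P\oplus Q]$. The point is that this iterated localisation $\T\to\T/\thick(P)\to\bigl(\T/\thick(P)\bigr)/\thick(\varphi_P(Q))$ is canonically identified with the single localisation $\T\to\T/\thick(P\oplus Q)$, because $\thick$ of the image of $Q$ in $\T/\thick(P)$ pulls back to $\thick(P\oplus Q)$, and localisation is transitive. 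Going down then right gives the symmetric computation with the roles of $P$ and $Q$ interchanged, landing in $\bigl(\T/\thick(Q)\bigr)/\thick(\varphi_Q(P))\cong\T/\thick(P\oplus Q)$ via the same transitivity. Since both composites agree with the diagonal $\varphi_{P\oplus Q}$ — which by definition is the map induced by $\T\to\T/\thick(P\oplus Q)$ — all three agree, and the diagram commutes.

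The main obstacle I anticipate is making the identifications of the iterated quotients $\z_{\varphi_P(Q)}\big(\z_P/[P]\big)\big/[\varphi_P(Q)]$ with $\z_{P\oplus Q}/[P\oplus Q]$ precise and canonical, rather than merely abstractly triangle-equivalent. Concretely, one must check: (i) that under the equivalence $\z_P/[P]\cong\T/\thick(P)$, the subcategory $\z_{\varphi_P(Q)}$ computed inside $\z_P/[P]$ corresponds to (the image of) $\z_{P\oplus Q}$, which is a matter of unwinding the definition $\z_R={^{\perp_{>0}}R}\cap R^{\perp_{>0}}$ together with the fact that $\thick(P)$ and $\thick(\varphi_P(Q))$ jointly generate $\thick(P\oplus Q)$; and (ii) that the two localisation functors $\z_{P\oplus Q}\to\z_{P\oplus Q}/[P\oplus Q]$ obtained by the two routes are literally (not just up to isomorphism) the ideal-quotient functor, so that the induced maps on presilting objects coincide on the nose. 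Both are essentially bookkeeping with Verdier quotients and the universal property of localisation, and I would organise them as a short sequence of lemmas about transitivity of silting reduction; once these canonical identifications are in place, the commutativity is immediate from the description of each $\varphi$ as "apply the quotient functor." An alternative, slightly more hands-on route would be to verify directly that for $X=P\oplus Q\oplus Y$ in $\npresilt{2}{S}_{P\oplus Q}(\T)$, both paths send $X$ to the class of $Y$ in $\z_{P\oplus Q}/[P\oplus Q]$, using that $\varphi_R$ simply "forgets" the summand $R$ at each stage; this avoids the abstract quotient manipulations but requires care that $Y$ remains the correct representative after each reduction, which again comes down to \Cref{lem:compphi} and the transitivity of $\thick$.
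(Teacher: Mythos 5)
Your route is essentially sound, but it is a genuinely different one from the paper's. You pass through the Verdier quotients $\T/\thick(P)$, $\T/\thick(Q)$, $\T/\thick(P\oplus Q)$ and invoke transitivity of localisation, using the equivalences $\z_R/[R]\cong\T/\thick(R)$ from \Cref{IY18.3.7}\eqref{IY.3.6} to translate back. The paper never leaves the additive level: it applies the Third Isomorphism Theorem for ideal quotients to get $(\z_{P\oplus Q}/[P])/[Q]\cong\z_{P\oplus Q}/[P\oplus Q]\cong(\z_{P\oplus Q}/[Q])/[P]$, and then observes that the square of ideal quotient functors out of $\z_{P\oplus Q}$ commutes, which immediately gives commutativity of the induced maps on presilting objects. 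The trade-off is exactly the bookkeeping you flag in your items (i) and (ii): identifying $\z_{\varphi_P(Q)}$ inside $\z_P/[P]$ with the image of $\z_{P\oplus Q}$, and checking that the composite localisations agree on the nose rather than up to equivalence, are precisely the points the ideal-quotient argument sidesteps, since there everything is literally a quotient of the same category $\z_{P\oplus Q}$ by nested ideals. So your plan can be completed, but it carries Verdier-calculus overhead (thick closures of images, strictness of identifications) that the paper's shorter argument avoids.

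One point you do not address at all, and which is needed for the lemma as stated: the labels on the diagram. A priori $\varphi_{\varphi_Q(P)}$ lands in $2$-term presilting objects with respect to the Bongartz completion of $\varphi_Q(P)$ computed in $\z_Q/[Q]$, not with respect to $T_{P\oplus Q}$; similarly for $\varphi_{\varphi_P(Q)}$, and for the subscript $\varphi_P(Q)$ on the top-right set one needs \Cref{lem:compphi}, which you do cite. The missing identification is $\varphi_Q T_{P\oplus Q}\cong T_{\varphi_Q(P)}$ (and symmetrically in $P$), which the paper obtains from Jasso's formula $\varphi_Q T_X\cong T_{\varphi_Q(X)}$ \cite[Proposition~4.10(b)]{Jas15} with $X=P\oplus Q$. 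Without this, the two composites and the diagonal do not visibly have the common codomain $\npresilt{2}{T_{P\oplus Q}}(\z_{P\oplus Q}/[P\oplus Q])$, so you should add this verification (or an equivalent one phrased in your Verdier-quotient setting) before concluding.
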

	\begin{proof}
		{For the sake of terseness, let $[Q]$ denote the ideal of $\mathcal{Z}_{{ P}\oplus { Q}}/[{P}]$ consisting of morphisms factoring through $\add(Q)$, and similarly for the ideal $[P]$ of $\mathcal{Z}_{{ P}\oplus { Q}}/[{Q}]$.} By the Third Isomorphism Theorem, we have additive equivalences 
		\begin{equation}\label{eq:functorphi1}
		{\mathcal{Z}_{{ P}\oplus { Q}}/[{ P}]\over [{Q}]}\simeq{\mathcal{Z}_{{ P}\oplus { Q}}/[{ P}]\over[{ P}\oplus { Q}]/[{ P}]} \simeq {\mathcal{Z}_{{ P}\oplus { Q}}\over[{ P}\oplus { Q}]},
		\end{equation}
		and similarly
		\begin{equation}\label{eq:functorphi2}
		{\mathcal{Z}_{{ P}\oplus { Q}}/[{ Q}]\over[{ P}]}\simeq {\mathcal{Z}_{{ P}\oplus { Q}}\over[{ P}\oplus { Q}]}.
		\end{equation}
		The isomorphisms in \eqref{eq:functorphi1} and \eqref{eq:functorphi2} enable the construction of an essentially commutative diagram of ideal quotient functors
		\begin{equation}\label{eq:idealsq}
		\begin{tikzcd}[row sep=3em,column sep=3em]
		\z_{{ P}\oplus { Q}}\arrow[d]\arrow[r]\arrow[dr]  & \z_{{ P}\oplus { Q}}/[{ P}] \arrow[d] \\
		\z_{{ P}\oplus { Q}}/[{ Q}] \arrow[r] & \z_{{ P}\oplus { Q}}/[{ P}\oplus { Q}]
		\end{tikzcd}
		\end{equation}
		{\Cref{IY18.3.7}\eqref{IY18.3.7.i} asserts that the functors going out of $\z_{{ P}\oplus { Q}}$ induce the bijections going out of $\npresilt{2}{ S}_{{ P}\oplus { Q}}(\c)$ in \eqref{eq:itphi}. 
		
		We complete the proof by showing that the maps $\varphi_{\varphi_Q(P\oplus Q)}$ and $\varphi_{\varphi_P(P\oplus Q)}$ have the claimed codomain, and that they factorise $\varphi_{P\oplus Q}$ as shown in \eqref{eq:itphi}.}
		{To give an argument as to why $\varphi_{\varphi_Q(P\oplus Q)}$ (and similarly $\varphi_{\varphi_P(P\oplus Q)}$) has the claimed codomain in \eqref{eq:itphi}, it suffices to show that the functor on the bottom of \eqref{eq:idealsq} has the following property:
		\begin{equation}\label{eq:functorphispec}
		\text{The ideal quotient functor $ \z_{P\oplus Q}/[Q]\to \z_{P\oplus Q}/[P\oplus Q]$ sends $T_{Q}^+$ to $T_{P\oplus Q}^+$.}
		\end{equation}
		Indeed, it follows from \Cref{IY18.3.7}\eqref{IY18.3.7.i} and \eqref{eq:functorphi2} that $\varphi_{\varphi_Q(P\oplus Q)}$ sends presilting objects to presilting objects, and the claim in \eqref{eq:functorphispec} will ensure that $2_{T_{Q}^+}$-term objects are sent to $2_{T_{P\oplus Q}^+}$-term objects. To establish the claim in \eqref{eq:functorphispec}, it suffices to prove that the ideal quotient functor $ \z_{P\oplus Q}\to \z_{P\oplus Q}/[P\oplus Q]$ sends $T_{Q}^+$ and $T_{P\oplus Q}^+$ to isomorphic objects. Using the definition of Bongartz completion (see \eqref{eq:Bon-tri}), it is easy to see that $T_{Q}^+$ to $T_{P\oplus Q}^+$ are both isomorphic to $S$ in the Verdier quotient $\c/\thick(P\oplus Q)$. It now follows from \Cref{IY18.3.7}\eqref{IY.3.6} that $T_{Q}^+$ and $T_{P\oplus Q}^+$ are isomorphic in $\z_{P\oplus Q}/[P\oplus Q]$, which is enough to conclude that \eqref{eq:functorphispec} holds.}

{By \Cref{lem:compphi} and the previous paragraph, the maps in the \eqref{eq:itphi} are induced by the functors in \eqref{eq:idealsq}. The commutativity of the former diagram now follows from the commutativity of the latter.}
	\end{proof}
	
	We will now state our setup, for the sake of future reference. 
	
	\begin{setup}\label{setup:siltingtstr}
		Let $A$ be a non-positive dg $k$-algebra {such that $H^iA$ is finite dimensional for all $i\in \Z$}, and let $P$ be a {basic} two-term presilting object in $\per(A)$, i.e. an object in $\npresilt{2}{}(A)$. 
	\end{setup}
	{We are requiring $H^iA$ to be finite dimensional for all $i\in \Z$ so that $\per(A)$ becomes Hom-finite \cite[Proof of Proposition 2.4]{Ami09} (and consequently Krull--Schmidt, by idempotent completeness).} 
	Note that finite dimensional $k$-algebras are included in our setup.
	It would be possible to generalise \Cref{setup:siltingtstr} to the setting of ST-triples $(\c,\d,S)$ \cite[§4]{AMY19}. We do not give the definition of ST-triples here, but we note that if $\c$ is an algebraic triangulated category  {(i.e. triangle equivalent to the stable category of a Frobenius category)}, {this triple} is equivalent to a triple of the form $(\per(A),\Der_{\rm fd}(A),A)$, where $A$ is as in \Cref{setup:siltingtstr} \cite[{Proposition~6.12}]{AMY19}. The assumption of algebricity does not reduce the scope of applications at time of writing; the use of ST-triples seems not to have occurred in a {non-}algebraic context. 
	
	{In this section, we describe Iyama--Yang silting reduction and its interaction with silting t-structures.} Silting reduction with respect to $P$ occurs in the Verdier quotient $\per(A)/\thick(P)$, but we will show that subcategory 
	{$P^{\perp_{\Z}}_{\rm fd}{\defeq } P^{\perp_{\Z}} \cap \Der_{\rm fd}(A)$} of $ \Der_{\rm fd}(A)$ also plays a role{, where $P^{\perp_{\Z}} $ is a right perpendicular subcategory of $\Der(A)$} (see \eqref{eq:perpr}). If $\per(A)/\thick(P)$ is triangle equivalent to $\per(C)$ for some dg algebra $C$, then $P^{\perp_{\Z}}_{{\rm fd}}$ is triangle equivalent to $\Der_{\rm fd}(C)$ (see \Cref{prop:PperpIsDfd}). Note that $P^{\perp_{\Z}}_{{\rm fd}}$ is a thick subcategory of $\Der_{\rm fd}(A)$. The t-structures defined by these silting objects will be restricted to $P^{\perp_{\Z}}_{{\rm fd}}$, which is achievable since $P^{\perp_{\Z}}_{{\rm fd}}$ is a t-exact subcategory of $\Der_{\rm fd}(A)$ (see \Cref{lem:perptexsummand}). 
	
	We first specify what it means for a t-structure to be generated by a silting object. {We employ the following notational convention: Given an object $T$ in $\per(A)$ and $I\subseteq \Z$, let $T^{\perp_{I}}$ denote the right perpendicular of $\Der(A)$, as defined in \eqref{eq:perpr}, and define $T^{\perp_{I}}_{\rm fd} \defeq T^{\perp_{I}} \cap \Der_{\rm fd}(A)$.}
	
	\begin{proposition-definition}[{\cite[Proposition 4.6]{AMY19}}] \label{lem:heartofS}
		Let $T$ be a silting object in $\per(A)$.
		{The pair $(T^{\perp_{> 0}},T^{\perp_{\leq 0}})$ forms a t-structure on $\Der(A)$. It restricts to a t-structure $(T^{\perp_{> 0}}_{\rm fd},T^{\perp_{\leq 0}}_{\rm fd})$ on $\Der_{\rm fd}(A)$.} A t-structure {of one of these forms} is said to be a \textit{silting t-structure}, and it is \textit{generated} by $T$.  
		
		Let ${\sigma}^0_T$ be the cohomology functor associated to the {silting} t-structure $(T^{\perp_{> 0}}_{{\rm fd}},T^{\perp_{\leq 0}}_{{\rm fd}})$. Then ${\sigma}^0_T$ sends $T$ to a projective generator of the heart $T^{\perp_{\neq 0}}_{{\rm fd}}$, and the Hom-functor $\Der_{\rm fd}(A)(T,-)$ restricts to an exact equivalence of abelian categories
		\begin{equation*}
		\Der_{\rm fd}(A)(T,-)\colon T^{\perp_{\neq 0}}_{{\rm fd}} \to \fgMod \End_{\Der_{\rm fd}(A)}(T).
		\end{equation*}
	\end{proposition-definition}
	
		
	\begin{lemma}\label{prop:newtex}
		Let $A$ and $P$ be as in \Cref{setup:siltingtstr} and let $(\U,\U^{\perp_{{0}}})$ be a t-structure on $\Der_{\rm fd}(A)$ such that $\U\subseteq P^{\perp_{>1}}_{{\rm fd}}$ {and $\U^{\perp_{{0}}}\subseteq P^{\perp_{\leq 0}}_{\rm fd}$}. Then the subcategory $P^{\perp_\Z}_{{\rm fd}}$ is t-exact with respect to $(\U,\U^{\perp_{{0}}})$. In particular, the pair $(\U\cap P^{\perp_\Z}_{{\rm fd}},\U^{\perp_{{0}}}\cap P^{\perp_\Z}_{{\rm fd}})$ is a t-structure on $P^{\perp_\Z}_{{\rm fd}}$.
	\end{lemma}
	\begin{proof}
		Let $\sigma_{\U}$ and $\sigma_{\U^{\perp_{{0}}}}$ be the truncation functors for $(\U,\U^{\perp_{{0}}})$. Fixing an arbitrary object $X\in P^{\perp_\Z}_{{\rm fd}}$, it suffices to show that $\sigma_{\U}X\in P^{\perp_\Z}_{{\rm fd}}$. We have a truncation triangle
		\begin{equation*}
		\begin{tikzcd}
		\sigma_{\U}X\arrow[r] & X\arrow[r] & \sigma_{\U^{\perp_{{0}}}}X\arrow[r] &\Sigma \sigma_{\U}X.
		\end{tikzcd}
		\end{equation*}
		Since $\sigma_{\U^{\perp_{{0}}}}X\in \U^{\perp_{{0}}}\subseteq P^{\perp_{\leq 0}}_{{\rm fd}}$, the desuspension $\Sigma^{-1}\sigma_{\U^{\perp_{{0}}}}X$ is in $P^{\perp_{\leq 1}}_{{\rm fd}}$. Using that $P^{\perp_{\leq 1}}_{{\rm fd}}$ is extension-closed in $\Der_{\rm fd}(A)$, we deduce that $\sigma_{\U}X\in P^{\perp_{\leq 1}}_{{\rm fd}}$. Since $\sigma_{\U}X$ also belongs to $\U$, and thus to $P^{\perp_{>1}}_{{\rm fd}}$, it follows that $\sigma_{\U}X\in P^{\perp_{\leq 1}}_{{\rm fd}} \cap P^{\perp_{>1}}_{{\rm fd}} =P^{\perp_\Z}_{{\rm fd}}$, as desired.
	\end{proof}
	\begin{proposition}\label{lem:perptexsummand}
		Let $A$ be as in \Cref{setup:siltingtstr}.
		\begin{enumerate}
			\item\label{lem:perptexsummand1} Let $T$ be a silting object in {$\per(A)$} of which $Q$ is a direct summand. Then $Q^{\perp_{\Z}}_{{\rm fd}}$ is t-exact with respect to the t-structure $(T^{\perp_{>0}}_{{\rm fd}},T^{\perp_{\leq 0}}_{{\rm fd}})$ on $\Der_{\rm fd}(A)$.
			\item \label{lem:perptexsummand2} Let $P\in\npresilt{2}{}(A)$ (as in \Cref{setup:siltingtstr}). Then $P^{\perp_{\Z}}_{{\rm fd}}$ is t-exact with respect to the standard t-structure $(A^{\perp_{>0}}_{{\rm fd}},A^{\perp_{\leq 0}}_{{\rm fd}})$ on $\Der_{\rm fd}(A)$.
		\end{enumerate} 
	\end{proposition}
	\begin{proof}
		We will prove that both assertions are consequences of \Cref{prop:newtex}. 
		
		We first address \eqref{lem:perptexsummand1}. {It is clear that $T^{\perp_{>0}}_{{\rm fd}} \subseteq Q^{\perp_{>0}}_{{\rm fd}} \subseteq Q^{\perp_{>1}}_{{\rm fd}}$ {and $T^{\perp_{\leq 0}}_{{\rm fd}} \subseteq Q^{\perp_{\leq 0}}_{\rm fd}$}, since $Q$ is a direct summand of $T$.}

		We move on to \eqref{lem:perptexsummand2}.
		Since $P$ is two-term, one can by definition find a triangle
		\begin{equation*}
		A_1 \to A_0 \to P \to \Sigma A_1,
		\end{equation*}
		where $A_0,A_1\in \add(A)$. 
		It follows from a long-exact sequence argument that {$A^{\perp_{> 0}}_{\rm fd}\subseteq P^{\perp_{> 1}}_{\rm fd}$ and that $A^{\perp_{\leq 0}}_{\rm fd}\subseteq P^{\perp_{\leq 0}}_{\rm fd}$}. Since the conditions in \Cref{prop:newtex} again are met, we conclude that $P^{\perp_{\Z}}_{{\rm fd}}$ indeed is t-exact with respect to the standard t-structure on $\Der_{\mathrm{fd}}(A)$.
	\end{proof}
	
	We claimed in the introduction that the perpendicular category $P^{\perp_{\Z}}_{{\rm fd}}$ {has a connection to} the $\tau$-perpendicular category {of $H_A(P)$}. This assertion will now be made explicit.
	
	\begin{proposition}\label{prop:H0J}
		Let $A$ and $P$ be as in \Cref{setup:siltingtstr}, and let $(M,Q)=H_A(P)$ be the $\tau$-rigid {pair} {in $\fgMod(H^0A)$} corresponding to $P$.
		\begin{enumerate}
		\item\label{item:H0J1} 
		The lattice isomorphism
		\begin{equation}\label{eq:H0J}
		{-\cap A^{\perp_{\neq 0}}_{{\rm fd}}} \colon \texact_{(A^{\perp_{>0}},A^{\perp_{\leq 0}})}(\Der_{\rm fd}(A))\to\wide({A^{\perp_{\neq 0}}}),
		\end{equation}
		provided by \Cref{ZC17.2.5} {and \Cref{lem:plus_rev}},
		sends $P^{\perp_{\Z}}_{{\rm fd}}$ to {$M^{\perp_0}\cap {^{\perp_{0}}(\tau M)}\cap Q^{\perp_0}\cap A^{\perp_{\neq 0}}_{\rm fd}$. Consequently, by \Cref{lem:heartofS}, the lattice isomorphism}
		\begin{equation}\label{eq:H0JJ}
		\Der(A,-)\colon \texact_{(A^{\perp_{>0}},A^{\perp_{\leq 0}})}(\Der_{\rm fd}(A))\to\wide(H^0A)
		\end{equation}
		{in \Cref{ZC17.2.5} sends $P^{\perp_{\Z}}_{{\rm fd}}$ to $J(M,Q)$.}  
		\item\label{item:H0J2} {The isomorphism in \eqref{eq:H0JJ} restricts to an isomorphism from the poset of t-exact subcategories of the form $P^{\perp_{\Z}}_{\rm fd}$, for some two-term presilting object $P$, to the poset of $\tau$-perpendicular wide subcategories of $\fgMod{H^0A}$.}
		\end{enumerate}
	\end{proposition}
	\begin{proof}
		We first prove \eqref{item:H0J1}.
		By \Cref{lem:plus_rev}\eqref{lem:plus_rev2}, the bijection {$-\cap A^{\perp_{\neq 0}}_{\rm fd}$} in \eqref{eq:H0J} {is inverted by} $\thick_{\Der_{\rm fd}(A)}(-)$. Since these are mutually inverse isomorphisms of posets, it suffices to prove that
		\begin{align}
		P^{\perp_{\Z}}_{{\rm fd}}\cap A^{\perp_{\neq 0}}_{{\rm fd}}&\subseteq {I(M,Q)},\label{eq:H0J1} \\
		\thick_{\Der_{\rm fd}(A)}\big({I(M,Q)} \big)&\subseteq P^{\perp_{\Z}}_{{\rm fd}}.\label{prop:H0J2}
		\end{align}
		{where $I(M,Q)\defeq {M^{\perp_0}\cap {^{\perp_{0}}(\tau M)}\cap Q^{\perp_0}\cap A^{\perp_{\neq 0}}_{\rm fd}}\subseteq A^{\perp_{\neq 0}}$.}
		
		The inclusion in \eqref{eq:H0J1} will be addressed first. Let $X\in P^{\perp_{\Z}}_{{\rm fd}}\cap A^{\perp_{\neq 0}}_{{\rm fd}}$. It is to be deduced that $X\in M^{\perp_0}\,\cap\, {^{\perp_0}(\tau M) \cap Q^{\perp_0}}$. The assertion that $X\in Q^{\perp_0}$ is immediate, having assumed that $X\in P^{\perp_{\Z}}_{{\rm fd}}\subseteq Q^{\perp_{\Z}}_{{\rm fd}}$. 
		Consider the following truncation triangle in the t-structure {$(A^{\perp_{\geq 0}},A^{\perp_{<0}})$ on $\Der(A)$ (the once suspension of the standard one):}
		\begin{equation}\label{eq:H0jtri1}
		\begin{tikzcd}
		\sigma^{< 0}_{A}P \arrow[r] & P \arrow[r] & {\sigma^{\geq 0}_{A}P}\arrow[d,symbol=\simeq] \arrow[r]&\Sigma \sigma^{< 0}_{A}P \\ 
		& & M &
		\end{tikzcd}
		\end{equation}
		{Since $P$ is two-term (so $\sigma^{\geq 1}P\simeq 0$) and ${\sigma}^0_A P\simeq M$, the third term in this triangle is indeed isomorphic to $M$.} 
		{The outer terms in \eqref{eq:H0jtri1} are in $A^{\perp_{\geq 0}}$, so there are no non-trivial maps from these objects to $X\in A^{\perp_{\neq 0}}_{\rm fd}\subseteq A^{\perp_{< 0}}_{\rm fd}$. Applying the contravariant functor $\Hom_{\Der(A)}(-,X)$ to the triangle in \eqref{eq:H0jtri1} thus yields an isomorphism} 
		\begin{equation}\label{eq:bodom}
		\begin{tikzcd} \Hom_{\Der(A)}(P,X) & \arrow[l,"\simeq"'] \Hom_{\Der(A)}(M,X), 
		\end{tikzcd}
		\end{equation}
		{so $X\in M^{\perp_0}$.}
		Lastly, we show that $\Hom_{H^0A}(X,\tau M)=0$. 
		 {We decompose $P$ into $P_M\oplus \Sigma Q$, where the second term is the largest direct summand of $P$ in $\Sigma A$. Consider a triangle in $\Der(A)$}
		\begin{equation*}
		\begin{tikzcd}
		P_1 \arrow[r,"p_M"] & P_0 \arrow[r] & P_M \arrow[r]&\Sigma P_1,
		\end{tikzcd}
		\end{equation*}
		{where $P_0,P_1 \in \proj(A)$ and $p_M$ is a right minimal morphism. By \Cref{lem:heartofS}, the induced right exact sequence }
		\begin{equation}\label{eq:seqseq}
		\begin{tikzcd}
		\Hom_{\Der(A)}(A,P_1) \arrow[r,"p_M\circ -",two heads] & \Hom_{\Der(A)}(A,P_0) \arrow[r] & \Hom_{\Der(A)}(A,P_M) \arrow[r]&0 
		\end{tikzcd}
		\end{equation}
		{is readily checked to be a minimal projective presentation of the $H^0A$-module $\Hom_{\Der(A)}(A,P_M)$. By \Cref{lem:heartofS}, an inverse of the exact equivalence $\Hom_{\Der(A)}(A,-) \colon A^{\perp_{\neq 0}}\to \fgMod H^0 A$ sends the right exact sequence in \eqref{eq:seqseq} to the right exact sequence} 
		\begin{equation*}
				\begin{tikzcd}[column sep=4em]
		\sigma^0_AP_1\arrow[r,"\sigma^0_A p_M\circ -",two heads] & \sigma^0_A P_0 \arrow[r] & \sigma^0_A M\arrow[d,symbol=\simeq] \arrow[r]&0 \\
		&& M &
		\end{tikzcd}
		\end{equation*}
		{in $A^{\perp_{\neq 0}}$, where $\sigma^0_A$ is the cohomology functor for the silting t-structure $(A^{\perp_{>0}}, A^{\perp_{\leq 0}})$ on $\Der(A)$. Using \Cref{lem:heartofS} to identify $\fgMod H^0A$ with $A^{\perp_{\neq 0}}$, we may claim that $\Hom_{H^0A}(X,\tau M)=0$ if and only if the induced map} $$\begin{tikzcd}[column sep=3em] \Hom_{\Der(A)}(\sigma^0_A P_1,X) & \arrow[l,"-\circ \sigma^0_Ap_M"']  \Hom_{\Der(A)}(\sigma^0_AP_0,X) \end{tikzcd}$$ {surjects \cite[Proposition 2.4(b)]{AIR14}, or indeed that}
		\begin{equation*}
			\begin{tikzcd}[column sep=4em] \Hom_{\Der(A)}(\sigma^{\geq 0}_{A} P_1,X) & \arrow[l,"-\circ \sigma^{\geq 0}_{A} p_M"']  \Hom_{\Der(A)}(\sigma^{\geq 0}_{A}P_0,X) \end{tikzcd} 
		\end{equation*}
		{surjects, since $\sigma^0_A Y=\sigma^{\geq 0}_{A}Y$ for all $Y\in A^{\perp_{> 0}}$. Since $\sigma^{\geq 0}_{A}$ is left adjoint to the inclusion of $A^{\perp_{<0}}$ into $\Der(A)$, it is equivalent to show that}
		\begin{equation}\label{eq:mormor}
			\begin{tikzcd}[column sep=3em] \Hom_{\Der(A)}(P_1,X) & \arrow[l,"-\circ p_M"']  \Hom_{\Der(A)}(P_0,X) \end{tikzcd} 
		\end{equation}
		{surjects. This is shown by a long-exact sequence argument, applying $\Hom_{\Der(A)}(-,X)$ to the triangle in \eqref{eq:H0jtri1}. We have shown that $\Hom_{H^0A}(X,\tau M)=0$, and moreover that the inclusion in \eqref{eq:H0J1} holds.}

		To prove that the inclusion in \eqref{prop:H0J2} holds, it suffices to show that ${I(M,Q)}\subseteq P^{\perp_{\Z}}_{{\rm fd}}$, since $P^{\perp_{\Z}}_{{\rm fd}}$ is {a thick subcategory of $\Der_{\rm fd}(A)$. Since ${I(M,Q)}\subseteq A^{\perp_{\neq 0}}_{{\rm fd}}$ and $P$ is two-term, we can immediately assert that ${I(M,Q)}\subseteq P^{\perp_{\neq 0,1}}_{\rm fd}$. We fix an $X$ in ${I(M,Q)}$, and apply the arguments in the previous paragraph in reverse. Using the isomorphism in \eqref{eq:bodom} again, we see that $X\in P_{\rm fd}^{\perp_0}$. To show that ${I(M,Q)}\subseteq P_{{\rm fd}}^{\perp_1}$, one can use the decomposition $P\simeq P_M\oplus \Sigma Q$ and show that ${I(M,Q)}\subseteq P_{M,{\rm fd}}^{\perp_1}\cap (\Sigma Q)^{\perp_{1}}_{\rm fd} = P_{M,{\rm fd}}^{\perp_1}\cap Q^{\perp_{0}}_{\rm fd}$.  The assumption that $\Hom_{H^0A}(X,\tau M)=0$, or rather the equivalent assertion that $-\circ p_M$ in \eqref{eq:mormor} surjects, combines with a long-exact sequence argument to show that $\Hom_{\Der(A)}(\Sigma^{-1}P_M,X)=0$, whence $X\in P_{M,{\rm fd}}^{\perp_1}$. Having also assumed that $\Hom_{\Der(A)}(Q, X)=0$, we indeed have $X\in Q^{\perp_{0}}_{\rm fd}$, letting us conclude the proof of \eqref{item:H0J1}.}
			
	{We conclude the proof by addressing \eqref{item:H0J2}. Indeed, since the map in \eqref{eq:H0J} injects, so does the restricted map from the poset of t-exact subcategories of the form $P^{\perp_{\Z}}$, for some two-term presilting object $P$, to the poset of $\tau$-perpendicular wide subcategories of $\fgMod{H^0A}$. Surjectivity follows from \eqref{item:H0J1} above, since it suffices to point out that $H^{-1}_{A}(M,Q)^{\perp_{\Z}}_{\rm fd}$ is a preimage of the arbitrary $\tau$-perpendicular wide subcategory $J(M,Q)$.}
	\end{proof}
	
	\Cref{section:tcmc} is devoted to the construction of the $\tau$-cluster morphism category in terms of two-term silting. We conclude this section with results that will turn out useful for this purpose. To prove them, we will apply some basic results from the localisation theory of compactly generated triangulated categories, especially in the {``large''} derived category $\Der(A)$, where the objects are possibly unbounded complexes of possibly infinitely generated modules. \Cref{prop:largeloc} addresses the localisation theory we need, whereas the subsequent corollaries will be applied in later sections.
	
	\begin{proposition}\label{prop:largeloc}
		Let $A$ and $P$ be as in \Cref{setup:siltingtstr} and let $\Loc(P)$ be the smallest thick subcategory of $\Der(A)$ that is closed under set-indexed coproducts.
		\begin{enumerate}
			\item\label{prop:largeloc1} \cite[Theorem~5.6.1]{Kra08} There is a recollement of triangulated categories
			\begin{equation}
			\begin{tikzcd}[column sep=5em]
			P^{\perp_{\Z}} \arrow[r, "\iota_P" description, hook] & \Der(A) \arrow[r, "\rho_P" description] \arrow[l, "{\varpi_P}"', bend right=49] \arrow[l, bend left=49,"\kappa_P"] & \Loc(P) \arrow[l, "\lambda_P"', hook, bend right=49] \arrow[l, bend left=49,tail,"\mu_P"]
			\end{tikzcd}
			\end{equation}
			i.e. the diagram displays four adjoint pairs of triangle functors, the composite ${\varpi_P} \lambda_P$ is zero, all functors into $\Der(A)$ are fully faithful, and for all $X\in \Der(A)$ we have triangles
			\begin{equation*}
			\begin{tikzcd}
			\lambda_P\rho_P X \arrow[r,"\varepsilon"] & X \arrow[r,"\eta"] & \iota_P{\varpi_P} X \arrow[r] & \Sigma (\lambda_P\rho_P X), \\
			\iota_P\kappa_P X \arrow[r,"\varepsilon'"] & X \arrow[r,"\eta'"] & \mu_P\rho_P X \arrow[r] & \Sigma (\iota_P\kappa_P X),
			\end{tikzcd}
			\end{equation*}
			where $\eta$ and $\eta'$ (resp. $\varepsilon$ and $\varepsilon'$) are units (resp. co-units) of the adjunctions. 
			\item\label{cor:piBon=piA} Let $T^+_P$ be the Bongartz completion of $P$ as a two-term presilting object in $\per(A)$. Then ${\varpi_P} T^+_P \simeq {\varpi_P} A$ {in $P^{\perp_{\Z}}$}.
			\item\label{prop:largelocte} \cite[Proposition 3]{BH08} We have a triangle equivalence {$\mathbb{R}\Hom_{{\Lambda}}({\varpi_P}-,-) \colon P^{\perp_{\Z}}\to^{\simeq} \Der(C_P)$}, {where $C_P$ {is the dg endomorphism algebra $\mathcal{E}nd_A({\varpi_P} A)$. Moreover, the dg $k$-algebra $C_P$} is non-positive.}
			\item\label{prop:largeloccptpi} {The composite of ${\varpi_P}$ with the equivalence in \eqref{cor:piBon=piA} restricts to a functor} ${\pi_P\colon} \per(A)\to \per(C_P)$, which is a Verdier localisation functor with respect to $\thick(P)$. In particular, we have triangle equivalence $$\per(A)/\thick(P)\simeq \per(C_P).$$
			\item\label{prop:largelocbdd}  The functor $\Der(C_P)\simeq P^{\perp_{\Z}}\to \Der(A)$ preserves boundedness, i.e. it restricts to an embedding $\Der_{\rm fd}(C_P)\embed \Der_{\rm fd}(A)$.
		\end{enumerate}
	\end{proposition}
	\begin{proof}
		The assertion in \eqref{prop:largeloc1} is taken directly from the cited reference. {We note that $\Loc(P)^{\perp_0}=P^{\perp_{\Z}}$; the left hand side is contained in the right hand side since $\Loc(P)\supseteq P[i]$ for all $i\in \Z$, and the right hand side is contained in the left since $\Loc(P)$ can be built from $\add(P)$ using suspensions, desuspensions, extensions, and coproducts.} {The functors $\iota_P$ and $\lambda_P$ are naturally isomorphic to the inclusion functors, and we will simply treat them as inclusions for the remainder of this proof.}
		
		{Assertion \eqref{cor:piBon=piA} now easily follows from \eqref{prop:largeloc1}, as we now show.
		Recall that the Bongartz completion $T^{+}_P$ is given by $P\oplus Q^+$, where $Q^+$ is defined by the triangle (see \eqref{eq:Bon-tri})
		\begin{equation*}
	{ A}\to { Q^+}\to { P}_0\to^{\beta_{\Sigma { A}}}\Sigma { A},
	\end{equation*}
	in which $\beta_{\Sigma { A}}$ is a right $\add(P)$-approximation. Since the functor ${\varpi_P}$ sends $P$ to a zero object, applying ${\varpi_P}$ to the triangle above thus gives that ${\varpi_P} T^+_P \simeq {\varpi_P} Q^{+} \simeq {\varpi_P} A$ {in $P^{\perp_{\Z}}$}.}
		
		{By \eqref{prop:largeloc1}, we may identify $P^{\perp_{\Z}}$ with the Verdier quotient $\Der(A)/\Loc(P)$. We may now point to the cited reference to prove the first claim in \eqref{prop:largelocte}. The non-positivity of $C_P$ will be shown by building on Brüning--Huber's proof under our assumptions. For any $i>0$, Brüning--Huber show that the $k$-linear map}
		\begin{equation*}
		\begin{tikzcd}[column sep=10em]
			\Hom_{P^{\perp_{\Z}}}({\varpi_P} A, \Sigma^i {\varpi_P} A) \arrow[r,"{\mathbb{R}\Hom_{{\Lambda}}({\varpi_P}-,-)}"] & \Hom_{\Der(C_P)}(C_P, \Sigma^i C_P) \simeq H^i C_P,
		\end{tikzcd}
		\end{equation*}
		{is an isomorphism, whence it suffices to show that $\Hom_{P^{\perp_{\Z}}}({\varpi_P} A, \Sigma^i {\varpi_P} A)$ vanishes for all $i>0$. 
		By \eqref{cor:piBon=piA} above, it is equivalent to show that $\Hom_{P^{\perp_{\Z}}}({\varpi_P} T^+_P, \Sigma^i {\varpi_P} T^+_P)$ vanishes for all $i>0$.
		By \eqref{prop:largeloc1} above, the pair $(\Loc(P),P^{\perp_{\Z}})$ forms a t-structure on $\Der(A)$.
		The truncation triangle of $T^+_{P}$ in this t-structure is displayed along the first row below, and the second row is the truncation triangle of $\Sigma^i T^+_P$, for some $i>0$.}
		\begin{equation}\label{eq:baff}
			\begin{tikzcd}
			\rho_P T^+_P \arrow[r,"\varepsilon"] & T^+_P \arrow[r,"\eta"] &{\varpi_P} T^+_P \arrow[r,"\partial"]\arrow[d,"x"] & \Sigma \rho_P T^+_P \\
			 \Sigma^i \rho_P T^+_P \arrow[r,"\Sigma^i\varepsilon"] &  \Sigma^i T^+_P \arrow[r,"\Sigma^i\eta"] &  \Sigma^i  {\varpi_P} T^+_P \arrow[r,"\Sigma^{i} \partial"] & \Sigma^{i+1} \rho_P T^+_P
			\end{tikzcd}
		\end{equation}
		{In this diagram, the vertical morphism $x$ is arbitrary, and it is to be shown that $x=0$.}
		
		{It will be useful to show that $\Hom_{\Der(A)}(T^+_P,\Sigma^{i+1}\rho_P T^+_P)=0$ for all $i>0$. The object $P$ generates a silting t-structure $(P^{\perp_{>0}}\cap \Loc(P), P^{\perp_{\leq 0}}\cap \Loc(P))$ on $\Loc(P)$.
		The octahedral axiom gives a diagram where the long rows and columns are triangles, and the second column is a truncation triangle in $(P^{\perp_{>0}}\cap \Loc(P), P^{\perp_{\leq 0}}\cap \Loc(P))$ on $\Loc(P)$.}
		\begin{equation*}
		\begin{tikzcd}
                                                       & \sigma^{\leq 0}_P\rho_P T^+_P \arrow[d] \arrow[r, equal] & \sigma^{\leq 0}_P\rho_P T^+_P \arrow[d] &                                   \\
\Sigma^{-1}\varpi_P T^+_P \arrow[d, equal] \arrow[r] & \rho_P T^+_P \arrow[r] \arrow[d]                       & T^+_P \arrow[r] \arrow[d]           & \varpi_P T^+_P \arrow[d, equal] \\
\Sigma^{-1}\varpi_P T^+_P \arrow[r]                    & \sigma^{> 0}_P\rho_P T^+_P \arrow[d] \arrow[r]      & N \arrow[d] \arrow[r]               & \varpi_P T^+_P                    \\
                                                       & \Sigma \sigma^{\leq 0}_P\rho_P T^+_P \arrow[r, equal]    & \Sigma \sigma^{\leq 0}_P\rho_P T^+_P    &                                  
\end{tikzcd}
		\end{equation*}
		{Arguing vertically, the object $N$ is in $P^{\perp_{>0}}$, since it is an extension of objects in $P^{\perp_{>0}}$. A horizontal argument shows that $N$ is also in $P^{\perp_{\leq 0}}$, so $N$ is in $P^{{\perp_{\Z}}}$. This makes the third column a triangle where the first term is in $\Loc(P)$ and the third term is in $P^{\perp_{\Z}}$. By the uniqueness of truncation triangles in the t-structure $(\Loc(P),P^{\perp_{\Z}})$ (in the sense of \Cref{prop:BBD2.136}\eqref{prop:BBD2.1364}), it follows that $\rho_P T^+_P \simeq \sigma^{\leq 0}_P\rho_P T^+_P$, putting $\rho_P T^+_P$ in $P^{\perp_{>0}}$, which shows that $\Hom_{\Der(A)}(T^+_P,\Sigma^{i+1}\rho_P T^+_P)=0$ for all $i>0$.} 
		
		{We now return to showing that $x$ in \eqref{eq:baff} is the zero morphism, so that we can finish the proof of \eqref{prop:largelocte}. Since $\varpi_P$ is left adjoint to the inclusion functor $\iota_P$, the morphism $\eta$ induces an isomorphism}
		\begin{equation*}
		\begin{tikzcd}
			\Hom_{\Der(A)}(T^+_P,\Sigma^i  {\varpi_P} T^+_P)  & \arrow[l,"-\circ\eta"'] \Hom_{\Der(A)}({\varpi_P}T^+_P,\Sigma^i  {\varpi_P} T^+_P).
			\end{tikzcd}
		\end{equation*}
		{By the previous paragraph, the composite $(\Sigma^i \partial)\circ x\circ \eta$ vanishes. We can thus factor $x\circ \eta$ through $\Sigma^i \eta$, but since $T^+_P$ is silting, this implies that $x\circ \eta$ vanishes. The morphism $x$ now factors through $\partial$, but since there are no non-trivial morphism from $\Loc(P)$ to $P^{\perp_{\Z}}$, we have shown that $x$ vanishes, as desired.}

		{We move on to \eqref{prop:largeloccptpi}. The assertions in \eqref{prop:largeloc1} and \eqref{prop:largelocte} yield a triangle equivalence 
		\begin{equation}\label{eq:largelocte1}
			\Der(A)/\Loc(P) \to^{\sim} \Der(C_P).
		\end{equation}
		Since the right adjoint $\iota_P$ preserves coproducts, it is indeed the case that the left adjoint ${\varpi_P}$ preserves compactness \cite[Theorem 5.1]{Nee96}. The subcategory of compact objects in $\Der(B)$ is $\thick(B)=\per(B)$, for any dg algebra $B$ \cite[§5.3]{Kel94}. {The composite functor $$\mathbb{R}\Hom_{{\Lambda}}({\varpi_P}-,-)\circ \varpi_P\colon \per(A) \to \Der(C_P)$$ thus takes values in $\per(C_P)$, so we can define a triangle functor}
		\begin{equation}\label{eq:piper}
		\pi_P\colon \per(A) \to \per(C_P), 
		\end{equation}
		{by restriction,} in turn inducing a triangle functor
		 $$\per(A)/\thick(P) \to \per(C_P).$$ This last functor can be obtained by restricting the equivalence in \eqref{eq:largelocte1} to compact objects. It is thus an equivalence, thanks to the idempotent completeness of $\Der(C_P)$ \cite[Theorem~2.1]{Nee92}. This proves \eqref{prop:largeloccptpi}.}
		
		Right adjoints between derived categories of dg algebras preserve boundedness if the left adjoint preserves compactness \cite[Lemma~4.2]{GP18}. {Since ${\varpi_P}$ preserves compactness, the last assertion \eqref{prop:largelocbdd} now follows from \eqref{prop:largeloccptpi}.}
	\end{proof}
	

	\begin{corollary}\label{prop:PperpIsDfd}
		Let $A$ and $P$ be as in \Cref{setup:siltingtstr}, {and} let $C_P$ as in \Cref{prop:largeloc}\eqref{prop:largelocte}.
		Then {there is a triangle equivalence $P^{\perp_{\Z}}_{{\rm fd}} \simeq \Der_{\mathrm{fd}}(C_P)$.} 		
		\end{corollary}
	\begin{proof}
		 \Cref{prop:largeloc}\eqref{prop:largelocte} provides an equivalence from the perpendicular {sub}category $P^{\perp_{\Z}}$ {of} $\Der(A)$ to $\Der(C_P)$, sending ${\varpi_P} A$ to $C_P$. {In order to prove that this equivalence restricts to the bounded derived categories, it remains to show that the vector space $ \bigoplus_{i\in \Z} \Hom_{\Der(A)}(A, \Sigma^{i} \iota_P X)$ is finite dimensional precisely when $\bigoplus_{i\in \Z}\Hom_{P^{\perp_{\Z}}}(\varpi_PA, \Sigma^{i} X)$ is finite dimensional, where $X\in P^{\perp_{\Z}}$, but this follows directly from \Cref{prop:largeloc}\eqref{prop:largeloc1}, or more specifically the fact that $({\varpi_P}, \iota_P)$ is an adjoint pair of triangle functors.}
			\end{proof}

	\begin{corollary}\label{prop:PperpIsDfdcor}
		Let $A$ and $P$ be as in \Cref{setup:siltingtstr}. {The functor $\pi_P\colon \per(A)\to \per(C_P)$ from \Cref{prop:largeloc}\eqref{prop:largeloccptpi} induces a bijection
		$$\pi_P\colon \npresilt{2}{}_P(A) \to \npresilt{2}{}(C_P). $$}
		{It fits into a commutative diagram of bijections:}
		\begin{equation}\label{eq:PperpIsDfdcor1}
		\begin{tikzcd}	
		\npresilt{2}{}_{{ P}}({A}) \arrow[rd, "{\varphi}_{ P}"] \arrow[dd, "\pi_{P}"'] &                                                                                  \\
                                                                                 & {\npresilt{2}{{T^+_P}}({\z_P/[P]})} \arrow[ld,"\zeta_P"] \\
		\npresilt{2}{}(C_{P})                                                            &                                                                                 
		\end{tikzcd}
		\end{equation}	
		{where $\zeta_P$ is induced by a triangle equivalence $\z_P/[P] \to^{\zeta_P} \per(C_P)$ sending $T^+_P$ to $C_P$.}
		\end{corollary}
		\begin{proof}
			{We have a triangle equivalence $\per(A)/\thick(P) \to^{\sim} \per(C_P)$, sending $A$ to $C_P$. By \Cref{prop:largeloc}\eqref{cor:piBon=piA} and \eqref{prop:largelocte}, it also sends the Bongartz completion $T^+_P$ to $C_P$. Using \Cref{IY18.3.7}\eqref{IY.3.6}, one constructs a triangle equivalence $\z_P/[P] \to^{\zeta_P} \per(C_P)$ sending $T^+_P$ to $C_P$.  Since we can regard the functor $\pi_P$ as a Verdier localisation, we deduce from \Cref{IY18.3.7}\eqref{IY18.3.7.i} that it indeed induces a bijection
	$$\npresilt{2}{}_P(A) \to^{\varphi_P} {\npresilt{2}{{T^+_P}}({\z_P/[P]})}  \to^{\zeta_P} \npresilt{2}{}(C_P), $$ as claimed,} {and the commutativity of \eqref{eq:PperpIsDfdcor1} is immediate.}
			\end{proof}	

	{
	\begin{corollary}\label{lem:functorphi'}
	Let $A$ and $P$ be as in \Cref{setup:siltingtstr}, and let $Q\in \npresilt{2}{}(C_P)$. {Consider the triangle functors}
	\begin{align*}
	\pi_P\colon \per(A) &\to \per(C_P) \\
	\pi_Q\colon \per(C_P) &\to \per(C_{P,Q}) \\
	\pi_{\pi_P^{-1}(Q)}\colon \per(A) &\to \per({C_{\pi_P^{-1}(Q)}}) 
	\end{align*}
	provided by \Cref{prop:largeloc}\eqref{prop:largeloccptpi}. {There is a triangle equivalence $\per(C_{P,Q})\to \per(C_{\pi_P^{-1}(Q)})$, which we may use to identify the} composite $\pi_Q\circ \pi_P$ {with} $\pi_{\pi_P^{-1}(Q)}$ {up to natural isomorphism}. 
	{Consequently, we have that the bijections}
	\begin{align*}
	\pi_P\colon \npresilt{2}{}_P(A) &\to \npresilt{2}{}(C_P) \\
	\pi_Q\colon \npresilt{2}{}_Q(C_P) &\to \npresilt{2}{}(C_{P,Q}) \\
	\pi_{\pi_P^{-1}(Q)}\colon \npresilt{2}{}_{\pi_P^{-1}(Q)}(A) &\to \npresilt{2}{}({C_{\pi_P^{-1}(Q)}}) 
	\end{align*}
	provided by \Cref{prop:PperpIsDfdcor}, satisfy $\pi_Q \circ \pi_P = \pi_{\pi_P^{-1}(Q)}$.
	\end{corollary}}
	\begin{proof}
	{Consider the diagram {of triangle functors, where the horizontal functors are fully faithful and the vertical functors are Verdier localisations:}}
	\begin{equation*}
	\begin{tikzcd}
\thick(P) \arrow[r, hook] & \thick({\pi_P^{-1}(Q)}) \arrow[r, hook] \arrow[d] & \per(A) \arrow[dd, "\pi_{\pi_P^{-1}(Q)}", bend left=49] \arrow[d, "\pi_P"'] \\
                          & \thick(Q) \arrow[r, hook]                         & \per(C_P) \arrow[d, "\pi_Q"']                                               \\
                          &                                                   & {\per(C_{P,Q})}                                                            
\end{tikzcd}
	\end{equation*}
	{The universal property of Verdier localisations now induces a triangle equivalence $\per(C_{P,Q})\to \per(C_{\pi_P^{-1}(Q)})$. } {It follows that $\pi_{\pi_P^{-1}(Q)}$ {can be identified with} $\pi_Q\circ \pi_P$ {up to natural isomorphism}, as shown on the right in our diagram.}
	\end{proof}

	\section{Compatibility of the Iyama--Yang and Buan--Marsh bijections}\label{sec:comp}
	
	Reduction techniques have been developed for both silting objects (known as \textit{silting reduction} \cite{IY18}) and support $\tau$-tilting modules (\textit{$\tau$-tilting reduction} \cite{Jas15}), and they are compatible \cite[Theorem~4.12(b)]{Jas15}. In this section we generalise Jasso's compatibility theorem, by showing that the support $\tau$-tilting reduction of Buan--Marsh \cite[Section~3]{BM18w} is compatible with silting reduction. 
	
	{Buan--Marsh prove the following.}
	\begin{theorem}[{\cite[Theorem~3.6]{BM18w}}]\label{thm:BM18w.3.6}
		Let ${\Lambda}$ be a finite dimensional $k$-algebra, and let $(M,Q)$ be a $\tau$-rigid pair in $\fgMod {\Lambda}$. There is a bijection
		\begin{equation*}
		\mathcal{E}_{(M,Q)}\colon \staurigidpair_{(M,Q)}({\Lambda}) \to  \staurigidpair(J(M,Q)),
		\end{equation*}
		where $J(M,Q)$ is the $\tau$-perpendicular category of $(M,Q)$.
	\end{theorem}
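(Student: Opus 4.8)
The plan is to transport the statement into two-term silting theory inside $\T\defeq\Kb(\proj A)$, where it becomes an instance of Iyama--Yang silting reduction, and then transport it back. Recall that $\T$ is a Hom-finite Krull--Schmidt triangulated category in which $S\defeq A$ is a perfect silting object with $\End_\T(S)=A$, so that \Cref{IJY14.4.6} and \Cref{IY18.3.7} both apply. By \Cref{IJY14.4.6}, the functor $H_S=\T(S,-)$ gives a bijection $\npresilt{2}{S}(\T)\to\staurigidpair(A)$; let $P$ be the $2_S$-term presilting object with $H_S(P)=(M,Q)$, which exists and is unique since $H_S$ is a bijection. The same theorem restricts $H_S$ to a bijection $\npresilt{2}{S}_P(\T)\to\staurigidpair_{(M,Q)}(A)$, which is the left-hand side of what we want to describe.

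First I would apply silting reduction with respect to $P$. Let $T_P$ be the Bongartz completion of $P$ and put $\z_P={}^{\perp_{>0}}P\cap P^{\perp_{>0}}$. By the bijection \eqref{2presiltbij} of \Cref{IY18.3.7}, the ideal quotient $\z_P\to\z_P/[P]$ induces a bijection $\varphi_P\colon\npresilt{2}{S}_P(\T)\to\npresilt{2}{\overline{T_P}}(\z_P/[P])$, where $\overline{T_P}$ is the image of $T_P$ and $\z_P/[P]\simeq\T/\thick(P)$ is again a Hom-finite Krull--Schmidt triangulated category with $\overline{T_P}$ as a perfect silting object. Applying \Cref{IJY14.4.6} a second time, now to the pair $(\z_P/[P],\overline{T_P})$, yields a bijection $H_{\overline{T_P}}\colon\npresilt{2}{\overline{T_P}}(\z_P/[P])\to\staurigidpair(C')$ onto the full set of support $\tau$-rigid pairs of $C'\defeq\End_{\z_P/[P]}(\overline{T_P})$. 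Now $H_S$ carries $T_P$ to the Bongartz completion $(M^+,Q)$ of the pair $(M,Q)$, and I would invoke Jasso's compatibility of silting reduction with $\tau$-tilting reduction \cite[Theorem~4.12(b)]{Jas15} to identify $C'$ with the $\tau$-tilting reduction $C=\End_A(M^+)/[M]$ of \Cref{DIRRT17.4.12}.

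Finally, granting $C'\cong C$, the exact equivalence $F_{(M,Q)}=\Hom_A(M^+,-)\colon J(M,Q)\to\fgMod(C)$ of \Cref{DIRRT17.4.12} matches the defining condition of \Cref{def:suppJ}: a pair $(U,R)$ in $J(M,Q)$ is a support $\tau$-rigid pair of $J(M,Q)$ exactly when $(F_{(M,Q)}U,F_{(M,Q)}R)$ is a support $\tau$-rigid pair of $\fgMod(C)$; since $F_{(M,Q)}$ is an equivalence it therefore induces a bijection $\staurigidpair(J(M,Q))\to\staurigidpair(C)$. Composing $H_S^{-1}$, $\varphi_P$, $H_{\overline{T_P}}$, and the inverse of this last map gives a bijection $\mathcal{E}_{(M,Q)}\colon\staurigidpair_{(M,Q)}(A)\to\staurigidpair(J(M,Q))$; one then checks, at whatever level of detail is desired, that this composite coincides with the map constructed by Buan--Marsh. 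I expect the main obstacle to be the identification $\End_{\z_P/[P]}(\overline{T_P})\cong\End_A(M^+)/[M]$, i.e.\ that the triangulated reduction and the module-theoretic $\tau$-tilting reduction compute the same algebra; this is delicate because $T_P$ need not lie in the heart $\d^0_S$, so the two endomorphism algebras must be compared through the equivalence of \Cref{IY08.6.2} on two-term objects modulo $[\Sigma S]$. Note that this route uses only Jasso's \emph{module}-level compatibility and not the pair-level bijection of \Cref{th:Jas15.4.12}, so there is no circularity.
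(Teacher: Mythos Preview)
Your argument is correct as a proof that \emph{some} bijection exists, and it is genuinely different from what the paper does. Note, however, that in the paper this theorem is a \emph{cited} result: the paper does not reprove it, but instead reviews Buan--Marsh's explicit, case-by-case construction of $\mathcal{E}_{(M,Q)}$ (Cases I(a)--(c), II(a)--(b), and the general case via the diagram \eqref{eq:ingeneral}). That explicit formula is what is needed downstream, in the proof of \Cref{th:Jas15.4.12}. Your route, by contrast, manufactures a bijection by transporting through silting reduction: $H_S^{-1}$, then $\varphi_P$, then $H_{\overline{T_P}}$, then $F_{(M,Q)}^{-1}$. This is clean and conceptual, and you are right that it needs only Jasso's algebra identification $\End_{\z_P/[P]}(\overline{T_P})\cong\End_A(M^+)/[M]$ (which is \cite[Theorem~4.12(a)]{Jas15}, not (b)), so there is no circularity in the \emph{existence} statement.

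The gap is in your last move. You write that ``one then checks, at whatever level of detail is desired, that this composite coincides with the map constructed by Buan--Marsh.'' That check is not a formality: it is exactly the content of \Cref{th:Jas15.4.12}, whose proof in the paper runs through the five cases of the Buan--Marsh definition one by one (using \Cref{lem:addquotrad} and the explicit approximation triangles defining $R_X$). So your approach establishes \Cref{thm:BM18w.3.6} as a bare existence statement, but it does not recover the specific map $\mathcal{E}_{(M,Q)}$; and since the paper's purpose in reviewing the construction is precisely to feed it into \Cref{th:Jas15.4.12}, your version would not serve that purpose without doing the same case analysis anyway. In short: the paper recalls the explicit Buan--Marsh map and later proves it agrees with the silting composite; you define the silting composite and defer the agreement, which is the hard part.
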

	We now recall how the map $\mathcal{E}_{(M,Q)}$ is constructed. {For a $\tau$-rigid pair $(M,Q)$, let $\ind_{{(M,Q)}}\staurigidpair({\Lambda})$ denote set of $\tau$-rigid pairs in $\fgMod {\Lambda}$ of the form $(M,Q)\oplus (N,R)$, where $(N,R)$ is indecomposable and not a direct summand of $(M,Q)$.}
	Buan--Marsh first address the cases where $(M,Q)$ is either of the form $(M,0)$ or $(0,Q)$. In each of the {five} cases below, they define a map {between sets of} indecomposable objects
	\begin{equation*}
	\mathcal{E}_{(M,Q)}\colon \ind_{{(M,Q)}}\staurigidpair({\Lambda}) \to  \ind \staurigidpair(J(M,Q)),
	\end{equation*}
	which is extended to a in the obvious way. {We will then consider the general case, where $(M,Q)$ is arbitrary, which will yield the bijection in \Cref{thm:BM18w.3.6}.}
	
	For a $\tau$-rigid ${\Lambda}$-module $X$, we denote by ${ P}_X$ its minimal projective presentation, considered as a two-term presilting object in $\Kb(\proj {\Lambda})$. This is to say that one sets ${ P}_X\defeq H_{{\Lambda}}^{-1}(X,0)$, where $H_{{\Lambda}}$ is as defined in \Cref{IJY14.4.6}.
	
	\textbf{Case I:} Suppose that $Q=0$. {We define $\mathcal{E}_{(M,0)}(M,0) \defeq (0,0)$.}
	
	\textbf{Case I(a):} If ${N}$ is an indecomposable ${\Lambda}$-module such that $M\oplus {N}$ is $\tau$-rigid and ${N}\not\in\gen(M)$, define $\mathcal{E}_{(M,0)}({M\oplus}{N},0) \defeq (f_M({N}),0)$, where $f_M\colon \fgMod {\Lambda}\to M^{\perp}$ is the torsion-free functor for the torsion pair $(\gen(M),M^{\perp})$ {(see \Cref{thm:AIR14.2.7})}, that is, the natural functor $\fgMod {\Lambda}\to M^{\perp}$ {(cf. the paragraph surrounding \eqref{eq:torsionpair_seq})}.
	
	\textbf{Case I(b):} If ${N}$ is an indecomposable ${\Lambda}$-module such that $M\oplus {N}$ is $\tau$-rigid and ${N}\in\gen(M)$, define $\mathcal{E}_{(M,0)}({M\oplus}{N},0) \defeq \big(0,f_M{H^0 P_{{N}}\langle -1\rangle}\big)$, where 
	\begin{equation*}
	\begin{tikzcd}
	{P_{{N}}\langle -1\rangle} \arrow[r] & ({{ P}_M})_{{N}}\arrow[r,"\beta_{{ P}_{{N}}}"] & { P}_{{N}} \arrow[r] & {\Sigma P_{{N}}\langle -1\rangle}
	\end{tikzcd}
	\end{equation*}
	is a distinguished triangle {in $\Kb(\proj \Lambda)$} and $\beta_{{ P}_{{N}}}$ is a minimal right $\add({ P}_M)$-approximation.
	
	\textbf{Case I(c):} If $R$ is an indecomposable projective {$\Lambda$-module} such that $\Hom_{{\Lambda}}(R,M)=0$, define 
	\begin{equation*}
	\mathcal{E}_{(M,0)}({M},R) \defeq \big(0,f_M{{H^0\big((\Sigma R)}\langle -1\rangle\big)}\big),
	\end{equation*}
	where 
	\begin{equation*}
	\begin{tikzcd}
	{({\Sigma R})\langle -1\rangle} \arrow[r] & ({ P}_M)_{\Sigma R}\arrow[r,"\beta_{\Sigma R}"] & \Sigma R \arrow[r] & { \Sigma ({\Sigma R})\langle -1\rangle}
	\end{tikzcd}
	\end{equation*}
	is a distinguished triangle {in $\Kb(\proj \Lambda)$} and $\beta_{\Sigma{R}}$ is a minimal right $\add({ P}_M)$-approximation. {Note that the last term in this triangle need not be isomorphic to ${\Sigma^2 R}\langle -1\rangle$; we have chosen our notation to be consistent with \eqref{eq:langle_tri}.}
	
	\textbf{Case II:} Suppose that $M=0$. {We define $\mathcal{E}_{(0,Q)}(0,Q) \defeq (0,0)$.}

	\textbf{Case II(a):} If ${{N}}$ is an indecomposable $\tau$-rigid {$\Lambda$}-module such that $\Hom_{{\Lambda}}(Q,{{N}})=0$, define
	\begin{equation*}
	\mathcal{E}_{(0,Q)}({{N}},{Q})\defeq ({{N}},0).
	\end{equation*}
	
	\textbf{Case II(b):} If $R$ is an indecomposable projective {$\Lambda$}-module such that {$(0,R)\in \ind_{(0,Q)}\staurigidpair({\Lambda})$}, define $\mathcal{E}_{(0,Q)}(0,{Q\oplus}R)\defeq (0,f_Q(R))$.
	
	\textbf{In general:} Let $(M,Q)$ be a $\tau$-rigid pair in $\fgMod {\Lambda}$. Let $(M^{{b}},0)$ be the Bongartz completion of $(M,0)$ {and $(M^{+},0)$ be the Bongartz completion of $(M,Q)$}. {Let ${\Gamma'=\End(M^b)/[M]}$ be the $\tau$-tilting reduction of ${\Lambda}$ with respect to $(M,0)$ {and ${\Gamma=\End(M^+)/[M]}$ the $\tau$-tilting reduction of ${\Lambda}$ with respect to $(M,Q)$}}. 
	{Case I above gives a bijection}
	\begin{equation*}
		{\mathcal{E}_{(M,0)}\colon \staurigidpair_{(M,0)}({\Lambda}) \to \staurigidpair J(M,0).}
	\end{equation*}
	{It sends the pair $(0,Q)$ to $(0,\widetilde{Q})$, where $\widetilde{Q}=f_M H^0\big((\Sigma Q)\langle -1\rangle\big)$ (see Case I(c)), whence we can restrict to a bijection}
	\begin{equation*}
		{\mathcal{E}_{(M,0)}\colon \staurigidpair_{(M,Q)}(\Lambda) \to \staurigidpair_{(0,\widetilde{Q})} J(M,0).}
	\end{equation*}
	{Restricting further to support $\tau$-tilting pairs gives a bijection }
	\begin{equation}\label{eq:isoofposets}
		{\mathcal{E}_{(M,0)}\colon \stautiltpair_{(M,Q)}(\Lambda) \to \stautiltpair_{(0,\widetilde{Q})} J(M,0).}
	\end{equation}
	{This is an isomorphism of posets; if $(N_1,R_1)$ and $(N_2,R_2)$ are elements in the domain such that $\gen(N_1)\supseteq \gen(N_2)$, one checks that $\gen(\mathcal{E}_{(M,0)}N_1)\supseteq \gen(\mathcal{E}_{(M,0)}N_2)$. This is a trivial matter for cases I(b) and I(c), and for I(a) this follows from the fact that the torsion-free functor $f_M$ preserves epimorphisms.}
	
	By \Cref{DIRRT17.4.12}, we have an exact equivalence
	\begin{equation*}
	F_{(M,0)}=\Hom_{{\Lambda}}(M^{{b}},-)\colon J({M},0)\to \fgMod {\Gamma'}.
		\end{equation*}
	{It induces a bijection {(see \eqref{eq:FMQbij})}
	\begin{equation}\label{eq:bigbih}
	F_{(M,0)}\colon\staurigidpair(J(M,0))\to \staurigidpair({\Gamma'}),
	\end{equation}
	which restricts to a bijection}
	\begin{equation*}
	F_{(M,0)}\colon\staurigidpair_{(0,{\widetilde{Q}})}(J(M,0))\to \staurigidpair_{(0,Q')}({\Gamma'}).
	\end{equation*}
	where $Q'$ is the projective ${\Gamma'}$-module $\Hom_{{\Lambda}}(M^b,Q)$.
	{Applying} Case II above {to the $k$-algebra ${\Gamma'}$} gives a bijection
	\begin{equation*}
	\mathcal{E}^{{C'}}_{(0,Q')}\colon\staurigidpair_{(0,Q')}({\Gamma'})\to \staurigidpair J_{{\Gamma'}}(0,Q'),
	\end{equation*}
	where $J_{{\Gamma'}}(0,Q')$ is the $\tau$-perpendicular category of $(0,Q')$ in $\fgMod {{\Gamma'}}$. 
	{\Cref{DIRRT17.4.12} gives a bijection}
	\begin{equation}\label{eq:DD}
		F^{{\Gamma'}}_{(0,Q')}\colon \staurigidpair J_{{\Gamma'}}(0,Q') \to \staurigidpair({\Delta}),
	\end{equation}
	{where $\Delta$ is the $k$-algebra $\End_{\Gamma'}(B)$, and $(B,Q')$ is the Bongartz completion of $(0,Q')$. Since the bijection in \eqref{eq:isoofposets} is an isomorphism of posets, it sends the maximal element to the maximal element, whence $(B,Q')=F_{(0,Q')}\mathcal{E}_{(M,0)}(M^+,Q) = (\Hom_{{\Lambda}}(M^b,M^+),\Hom_{{\Lambda}}(M^b,Q))$. By the Yoneda Lemma, there is an isomorphism of $k$-algebras}
	\begin{equation*}
		{\Delta = \End_{\Gamma'}(B) = \End_{\End_A(M^b)/[M]}(\Hom_{\Lambda}(M^b,M^+)) \simeq \End(M^+)/[M] = \Gamma.}
	\end{equation*}
	{In the following, we insert $\Gamma$ in the place of $\Delta$ in \eqref{eq:DD}.}
	We define $$\mathcal{E}_{(M,Q)}(X,R)\defeq F^{-1}_{(M,Q)}\circ F^{{\Gamma'}}_{(0,Q')} \circ \mathcal{E}^{{\Gamma'}}_{(0,Q')}\circ {F_{(M,0)} \circ \mathcal{E}_{(M,0)}},$$ {as displayed on the {right} in \eqref{eq:ingeneral} below.}
	\begin{equation}\label{eq:ingeneral}
	\begin{tikzcd}
                                                                            & {\staurigidpair_{(M,Q)}({\Lambda})} \arrow[ld, "{\mathcal{E}_{(M,0)}}"] \arrow[ddddd, "{\mathcal{E}_{(M,Q)}}"] \\
{\staurigidpair_{(0,{\widetilde{Q}})} J(M,0)} \arrow[d, "{F_{(M,0)}}"] &                                                                                                         \\
{\staurigidpair_{(0,Q')}({{\Gamma'}})} \arrow[d, "\mathcal{E}^{{\Gamma'}}_{(0,Q')}"]    &                                                                                                         \\
{\staurigidpair J_{{\Gamma'}}(0,Q')} \arrow[d, "{F^{{\Gamma'}}_{(0,Q')}}"]                  &                                                                                                         \\
\staurigidpair({\Gamma}) \arrow[rd, "{F^{-1}_{(M,Q)}}"]                            &                                                                                                         \\
                                                                            & {\staurigidpair J(M,Q)}                                                                                
\end{tikzcd}
	\end{equation}
	{This concludes our recollection of Buan--Marsh's bijection $\mathcal{E}_{(M,Q)}$.}
	
	Our aim is to link $\mathcal{E}_{(M,Q)}$ to Iyama--Yang silting reduction. This is achievable in the following setup.
	
	\begin{setup}\label{setup:comp}
		A Hom-finite Krull--Schmidt triangulated category $\c$ is fixed, as well as a silting object $S\in\c$. Let {basic} ${P}$ be a $2_{S}$-term presilting object {in $\c$}, let ${\Lambda}$ denote the endomorphism algebra $\End_{\c}({ S})$, {and let $\Gamma=\End_{\z_P/{[P]}}(T^+_P)$, where $T^{+}_P$ is the Bongartz completion of $P$.}
	\end{setup}
	
	 {Recall that $\Gamma$ is isomorphic to the $\tau$-tilting reduction of the $\tau$-rigid pair $H_A(M,Q)$ in $\fgMod \Lambda$ \cite[Theorem 4.12(a)]{Jas15}, where $H_{A}$ is as defined in \Cref{IJY14.4.6}. This fact will be used implicitly in our proofs below.}
	{In order to establish} the compatibility of the two reduction techniques described above, we prove {two} important lemmas.
	
	\begin{lemma}\label{lem:addquotrad}
		Let $\c$, ${ S}$, $P$, and ${\Lambda}$ be as in \Cref{setup:comp}. For all ${ X}\in \add({ S})*\Sigma\add({ S})$, we have a natural isomorphism
		${\c\over [{ P}]}({ S},{ X}) \cong f_{\c({ S},{ P})}\c({ S},{ X})$, as ${\Lambda}$-modules, where $f_{\c({ S},{ P})}\colon \fgMod {\Lambda}\to \c({ S},{ P})^{\perp}$ is the torsion-free functor for {the torsion pair} $(\gen(\c({ S},{ P})),\c({ S},{ P})^{\perp})$.
	\end{lemma}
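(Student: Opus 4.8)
The plan is to unwind both sides explicitly and compare. The left-hand side $\frac{\T}{[P]}(S,X)$ is, by definition, the quotient of $\T(S,X)$ by the subgroup of morphisms $S\to X$ factoring through an object of $\add(P)$. Since $P$ is $2_S$-term, $P\in\add(S)*\Sigma\add(S)$, and $X\in\add(S)*\Sigma\add(S)$ as well, so I can compute everything inside the subcategory $\add(S)*\Sigma\add(S)$ and then apply the functor $\T(S,-)$, which by \Cref{IY08.6.2} induces an equivalence $\frac{\add(S)*\Sigma\add(S)}{[\Sigma S]}\to\fgMod(A)$. Under this equivalence $P$ corresponds to the $A$-module $\T(S,P)=H_S(P)$ with its summand $\T(S,P_1)$ in the image of $\Sigma\add(S)$ killed; but the point is that a morphism $S\to X$ factors through $\add(P)$ in $\T$ if and only if its image factors through $\add(\T(S,P))$ in $\fgMod(A)$ — I would need to be slightly careful here because $\T(S,-)$ is only an equivalence after quotienting by $[\Sigma S]$, but morphisms out of $S$ never factor through $\Sigma\add(S)$ (as $\T(S,\Sigma S)$-nothing — more precisely $S$ being presilting gives $\T(S,\Sigma\add(S))=0$), so the quotient by $[\Sigma S]$ does not interfere. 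Hence $\frac{\T}{[P]}(S,X)\cong\frac{\fgMod(A)(\T(S,S),\T(S,X))}{[\T(S,P)]}=\frac{\Hom_A(A,\T(S,X))}{[\T(S,P)]}$, i.e. $\frac{\T}{[P]}(S,X)\cong \T(S,X)/\mathrm{tr}_{\T(S,P)}(\T(S,X))$ where $\mathrm{tr}$ denotes the trace (the sum of images of all maps from $\add(\T(S,P))$).

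Next I would identify the right-hand side. The torsion pair is $(\Gen(\T(S,P)),\T(S,P)^{\perp})$, and for a torsion pair of this form on $\fgMod(A)$, the torsion submodule of a module $N$ is precisely the trace $\mathrm{tr}_{\T(S,P)}(N)=\sum_{g\colon \T(S,P)^{\oplus n}\to N}\mathrm{im}(g)$, which lies in $\Gen(\T(S,P))$, with quotient $N/\mathrm{tr}_{\T(S,P)}(N)$ lying in $\T(S,P)^{\perp}$. (This is the standard description: $\Gen(M)$ for $M$ $\tau$-rigid — here $\tau$-rigid because $P$ is presilting — is a functorially finite torsion class by \Cref{prop:BBD2.136}-adjacent results, \cite[Theorem~2.7]{AIR14}, and its torsion-radical is the $M$-trace.) Therefore $f_{\T(S,P)}(\T(S,X))\cong\T(S,X)/\mathrm{tr}_{\T(S,P)}(\T(S,X))$, which is exactly the expression obtained for the left-hand side above.

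So the proof reduces to two facts: (i) the ideal $[\T(S,P)]$ of maps $A\to \T(S,X)$ factoring through $\add(\T(S,P))$ has image equal to the trace submodule $\mathrm{tr}_{\T(S,P)}(\T(S,X))$ — which is immediate since $\Hom_A(A,-)$ is evaluation, so a map $A\to\T(S,X)$ factoring through $\T(S,P)^{\oplus n}$ has image inside $\mathrm{tr}$, and conversely any element of the trace is hit; and (ii) the torsion radical of the torsion pair $(\Gen(\T(S,P)),\T(S,P)^{\perp})$ equals that same trace submodule. Both are standard, and naturality in $X$ is inherited from the naturality of $\T(S,-)$ and of the trace construction. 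The $A$-module structure on $\frac{\T}{[P]}(S,X)$ comes from post-composition by $\End_{\T}(S)=A$ acting on $S$, hence $\T(S,-)$ is $A$-linear and the identification is an iso of $A$-modules.

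The main obstacle I anticipate is the bookkeeping around \Cref{IY08.6.2}: one must be careful that $\T(S,-)$ sends "factors through $\add(P)$ in $\T$" exactly to "factors through $\add(\T(S,P))$ in $\fgMod(A)$". The subtlety is that $\T(S,-)$ is an equivalence only on $\frac{\add(S)*\Sigma\add(S)}{[\Sigma S]}$, so a map $S\to X$ factoring through $\add(P)\oplus\Sigma\add(S)$ a priori lands in the ideal $[\T(S,P)]$ only after we check that the $\Sigma\add(S)$-part contributes nothing — which it doesn't, because $\T(S,\Sigma\add(S))=0$ by presilting-ness of $S$, so any factorization of a map $S\to X$ through $\add(P)\ast$-stuff can be replaced by one genuinely through $\add(P)$. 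Once that is pinned down, the rest is a routine translation.
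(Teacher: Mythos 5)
Your proof is correct, but it takes a genuinely different route from the one in the paper, even though both hinge on the same key input, namely the fullness of $\T(S,-)$ coming from \Cref{IY08.6.2} together with $\T(S,\Sigma\add(S))=0$. The paper works with the exact sequence $0\to[P](S,X)\to\T(S,X)\to{\T\over[P]}(S,X)\to 0$ and verifies separately that the kernel lies in $\Gen(\T(S,P))$ (via a right $\add(P)$-approximation of $X$, using that $\T$ is Hom-finite Krull--Schmidt) and that the cokernel lies in $\T(S,P)^{\perp}$ (via a long exact sequence, the Auslander--Smal\o{} criterion to kill $\Ext^1_A(\T(S,P),[P](S,X))$, and fullness to see that every $A$-map $\T(S,P)\to\T(S,X)$ lands in $[P](S,X)$); uniqueness of the canonical sequence of the torsion pair then finishes the argument. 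You instead identify $[P](S,X)$ outright with the trace $\mathrm{tr}_{\T(S,P)}(\T(S,X))$ — one inclusion is trivial, the other is exactly your fullness-plus-evaluation argument, and your handling of the $[\Sigma S]$-ideal is right, since any map $S\to X$ factoring through $\Sigma\add(S)$ is zero — and then invoke the standard fact that for the torsion pair $(\Gen(M),M^{\perp})$ the torsion radical is the $M$-trace, so both sides equal $\T(S,X)/\mathrm{tr}_{\T(S,P)}(\T(S,X))$. What your route buys is economy: no approximation argument and no explicit $\Ext^1$ computation, with that content absorbed into the assertion that $(\Gen(\T(S,P)),\T(S,P)^{\perp})$ is a torsion pair, which the lemma's statement already presupposes by speaking of its torsion-free functor (and which ultimately rests on the $\tau$-rigidity of $\T(S,P)$, i.e.\ on \Cref{IJY14.4.6} and Auslander--Smal\o{}, the same ingredients the paper uses explicitly). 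One small slip: your citation of \Cref{prop:BBD2.136} in support of $\Gen(\T(S,P))$ being a torsion class is off target — the relevant references are \cite[Theorem~2.7]{AIR14} (stated for support $\tau$-tilting modules) or, for $\tau$-rigid modules, \cite[Proposition~5.8]{AS81} — but this is a bookkeeping issue, not a gap in the argument.
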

	\begin{proof}
		We have an exact sequence 
		\begin{equation}\label{eq:addquotradseq}
		\begin{tikzcd}
		{[{ P}]}({ S},{ X})\arrow[r,tail,"i"]&\c({ S},{ X})\arrow[r,two heads] & {\c\over [{ P}]}({ S},{ X})
		\end{tikzcd}
		\end{equation}
		of ${\Lambda}$-modules, where ${[{ P}]}$ is the ideal {of $\c$ consisting of} morphisms factoring through $\add(P)$.
		It suffices to show that ${[{ P}]}({ S},{ X})\in\gen(\c({ S},{ P}))$ and that ${\c\over [{ P}]}({ S},{ X})\in\c({ S},{ P})^{\perp}$. 
		
		Let ${ P}'\to^{\beta_{ X}}{ X}$ be a right $\add({ P})$-approximation of ${ X}$. By definition, the map
		\begin{center}
			\begin{tikzcd}
				\c({ S},{ P'})\arrow[r,"{\beta_{ X}\circ -}"] & {[{ P}]}({ S},{ X})
			\end{tikzcd}
		\end{center}
		surjects, whence $[{ P}]({ S},{ X})\in\gen(\c({ S},{ P}))$.
		
		We now show that $\Hom_{{\Lambda}}(\c({ S},{ P}),{\c\over [{ P}]}({ S},{ X}))=0$. This $k$-vector space appears as the third term in the following long exact sequence:
		\begin{equation*}
		\begin{tikzcd}
		0\arrow[r]&\Hom_{{\Lambda}}(\c({ S},{ P}),{[{ P}]}({ S},{ X})) \arrow[r,"i\circ -"]& \Hom_{{\Lambda}}(\c({ S},{ P}),\c({ S},{ X}))\arrow[r]& \Hom_{{\Lambda}}(\c({ S},{ P}),{\c\over [{ P}]}({ S},{ X}))\arrow[lld] \\ & \Ext^1_{{\Lambda}}(\c({ S},{ P}),{[{ P}]}({ S},{ X})). & &
		\end{tikzcd}
		\end{equation*}
		Having already shown that ${[{ P}]}({ S},{ X})\in \gen(\c({ S},{ P}))$, a result of Auslander--Smalø \cite[Proposition~5.8]{AS81} asserts that the $\tau$-rigidity of $\c({ S},{ P})$ {implies} the vanishing of $\Ext^1_{{\Lambda
	}}(\c({ S},{ P}),{[{ P}]}({ S},{ X}))$.
		Thus it suffices to show that $i\circ -$ surjects. 
		We will conclude the proof by showing that 
		\begin{equation*}
		\Hom_{{\Lambda}}(\c({ S},{ P}),\c({ S},{ X}))=\Hom_{{\Lambda}}(\c({ S},{ P}),{[{ P}]}({ S},{ X})),
		\end{equation*}
		which would make $i\circ -$ an injective endomorphism of a finite dimensional $k$-vector space, and thus surjective.
		It follows from \Cref{IY08.6.2} that all ${\Lambda}$-homomorphisms from $\c({ S},{ P})$ to $\c({ S},{ X})$ are determined by an equivalence class of morphisms ${ P}\to { X}$. Consequently, all homomorphisms in $\Hom_{{\Lambda}}(\c({ S},{ P}),\c({ S},{ X}))$ have image in ${[{ P}]}({ S},{ X})$, and $i\circ -$ surjects.
	\end{proof}
		
	\begin{lemma}\label{lem:lowertri}
		{Let} $\c$, ${ S}$, ${ P}$, ${\Lambda}$, {and ${\Gamma}$ be} as in \Cref{setup:comp}. {Let $(M,Q)=H_{ S}({ P})$ (where $H_{ S}$ is as defined in \Cref{IJY14.4.6}). Consider the equivalence $H'_S$ defined as the composite}
		 \begin{equation*}
		 \begin{tikzcd}[column sep=4em]
		 	\Big({\add({T^{+}_P})*\add({ T^{+}_P}) {\langle 1 \rangle}\Big) / [{ T^{+}_P}\langle 1 \rangle]}\arrow[d,symbol=\subseteq] \arrow[r,"{{\z_{P}\over [P]}(T^+_{P},-)}"] & \fgMod {\Gamma} \arrow[r, "F_{H_{ S}({ P})}^{-1}"] & J(H_{ S}({ P})), \\
			\z_P/{[P]} &&
		 \end{tikzcd}
		  \end{equation*}
		 {(see \Cref{IY08.6.2} and \Cref{DIRRT17.4.12}.)}.
		  {Then the resulting composite bijection} 
		  \begin{equation*}
		 \begin{tikzcd}[column sep=3em]
		 	\npresilt{2}{T^{+}_P}(\z_P/{[P]}) \arrow[r,"{H_{ T^{+}_P}}"] & \staurigidpair {\Gamma} \arrow[r, "F_{H_{ S}({ P})}^{-1}"] & \staurigidpair J(H_{ S}({ P}))
		 \end{tikzcd}
		  \end{equation*}
		{(see \Cref{IJY14.4.6} and \eqref{eq:FMQbij}) can be expressed as follows:}
		\begin{equation}\label{eq:lowertriform}
		\begin{tikzcd}[row sep=0.75em,column sep=2em]
		{\npresilt{2}{ T^{+}_P}(\z_{ P}/[{ P}])} \arrow[r, "H'_{ S}"] & \staurigidpair J(H_{ S}({ P}))                                                                                                            \\
Y \arrow[r] \arrow[u,symbol=\in]                                         & {{\left(f_{\c({S},{P})}\c({ S},{Y}),f_{\c({ S},{ P})} \c({ S},{ Y}_1\langle -1\rangle)\right)}.} \arrow[u,symbol=\in]
\end{tikzcd}
		\end{equation}
		where ${ Y}_1$ is the largest direct summand of ${ Y}$ in ${ T^{+}_P}\langle 1\rangle$, {and $\langle 1\rangle$ is defined dually to \eqref{eq:langle_tri}}.
		\end{lemma}
	\begin{proof}
		{Fixing $Y\in \npresilt{2}{T^{+}_P}(\z_P/{[P]})$, we first show that the ${\Gamma}$-module $F_{H_S{(P)}}f_{\c({ S},{ P})}\c({ S},{ Y})$ is isomorphic to ${\c \over [P]}(T^{+}_P,Y)$, yielding the claimed re-expression of the first component. In order to use the definition of $F_{H_S{(P)}}$ provided in \Cref{DIRRT17.4.12}, we note that if $(M^+,Q)$ is the Bongartz completion of $H_S{(P)}$, then $M^+\simeq \c(S,T^+_P)$.
		We get
		\begin{equation*}
		F_{H_S{(P)}} f_{\c({ S},{ P})}\c({ S},{ Y}) =\Hom_{{\Lambda}} \Big(\c(S,T^+_P) ,  f_{\c({ S},{ P})}\c({ S},{ Y})) \Big).
		\end{equation*}
		A result of Jasso \cite[Proposition~4.15]{Jas15} proves that the right hand side is isomorphic to ${\c \over [P]}(T^+_P, { Y})$. Since we have arrived where we desired, we conclude that the first component can be expressed as claimed.}
		
		It follows from the {definition of $\langle -1\rangle$ in \eqref{eq:langle_tri}} that $Y_1\langle -1\rangle \in \npresilt{2}{T^+_P}(\z_P/[P])$. One obtains an expression for the second component by repeating the reasoning in the previous paragraph, replacing $Y$ with $Y_1\langle -1\rangle$. Thus, the second component also is as claimed, and the proof is complete. 
	\end{proof} 
	
	We now have all ingredients to prove the main theorem of this section.

	\begin{theorem}\label{th:Jas15.4.12}
		Let $\c$, ${ S}$, $P$, ${\Lambda}$, and ${\Gamma}$ be as in \Cref{setup:comp}.
		We have a commutative diagram of bijections
		\begin{center}
			\begin{tikzcd}[column sep=4em]
				\npresilt{2}{ S}_{{ P}}(\c) \arrow[r,"H_{{ S}}"]\arrow[d,"\varphi_{ P}"] & \staurigidpair_{H_{ S}({ P})}({\Lambda})\arrow[d,"\psi_{H_{ S}({ P})}"]\arrow[dd,bend left=89,"\mathcal{E}_{H_{ S}({ P})}"] \\
				\npresilt{2}{ T^{+}_P}(\z_{ P}/[{ P}]) \arrow[r,"H_{ T^{+}_P}"]\arrow[rd,"H'_{ S}"'] & \staurigidpair({\Gamma}) \\
				& \staurigidpair J(H_{ S}({ P}))\arrow[u,"F_{{H_{ S}}({ P})}"']
			\end{tikzcd}
		\end{center}
		where $\psi_{H_{ S}({ P})} \defeq F_{{H_{ S}}({ P})} \circ \mathcal{E}_{H_{ S}({ P})}$.
	\end{theorem}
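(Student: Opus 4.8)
The plan is to deduce the entire diagram from one identity. Its lower triangle, $F_{H_S(P)}\circ H'_S=H_{T_P}$, is exactly \Cref{lem:lowertri}, and the curved arrow is defined so that $F_{H_S(P)}\circ\mathcal{E}_{H_S(P)}=\psi_{H_S(P)}$; since $F_{H_S(P)}$ is a bijection by \Cref{DIRRT17.4.12}, every face of the diagram commutes --- and $\mathcal{E}_{H_S(P)}$ is automatically a bijection, being then a composite of the bijections of \Cref{IJY14.4.6}, \Cref{IY18.3.7} and \Cref{lem:lowertri} --- as soon as we establish
\begin{equation*}
\mathcal{E}_{H_S(P)}\circ H_S \;=\; H'_S\circ\varphi_P\colon\ \npresilt{2}{S}_{P}(\T)\longrightarrow\staurigidpair J(H_S(P)).
\end{equation*}
Granting this, $\psi_{H_S(P)}\circ H_S=F_{H_S(P)}\circ\mathcal{E}_{H_S(P)}\circ H_S=F_{H_S(P)}\circ H'_S\circ\varphi_P=H_{T_P}\circ\varphi_P$, which is the top square.

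Write $(M,Q)=H_S(P)$. Each of $\varphi_P$, $H_S$, $H'_S$, $\mathcal{E}_{(M,Q)}$ carries direct summands to direct summands and is compatible with adjoining a summand: for $\varphi_P$ and $H_S$ by \Cref{IY18.3.7} and \Cref{IJY14.4.6}, for $H'_S$ by the formula in \Cref{lem:lowertri}, and for $\mathcal{E}_{(M,Q)}$ because it is assembled from its values on indecomposables. Hence it suffices to check the identity on $P\oplus X$ with $X$ indecomposable and $X\notin\add(P)$. Since $P\oplus X$ is presilting, $X\in\z_P={^{\perp_{>0}}P}\cap P^{\perp_{>0}}$, so $\varphi_P(P\oplus X)$ is simply the class $\overline{X}$ of $X$ in $\z_P/[P]$. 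Feeding $\overline X$ into the description of $H'_S$ in \Cref{lem:lowertri} and applying \Cref{lem:addquotrad} --- which says precisely that the ideal quotient $\T\to\T/[P]$ turns the functor $\T(S,-)$ into the torsion-free functor $f_M=f_{\T(S,P)}$ of $(\Gen\T(S,P),\T(S,P)^{\perp})$ --- identifies $H'_S(\overline X)$ with the pair assembled from $f_M\T(S,X)$ and $f_M\T(S,X_1)$, where $\Sigma X_1$ is the maximal summand of $X$ in $\Sigma\add(S)$. On the other hand $H_S(P\oplus X)=(M,Q)\oplus H_S(X)$ with $H_S(X)=(\T(S,X),0)$ if $X\notin\Sigma\add(S)$ and $H_S(X)=(0,\T(S,X_1))$ otherwise, and $\mathcal{E}_{(M,Q)}$ applied to this is computed by the Buan--Marsh case list. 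Thus the statement reduces to matching, case by case, the module-theoretic output of $\mathcal{E}_{(M,Q)}$ with that of $H'_S\circ\varphi_P$.

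The matching runs through the five elementary cases in the construction of $\mathcal{E}_{(M,Q)}$, the translation device being the functor $\T(S,-)$ and its reduction modulo $[P]$: by \Cref{IY08.6.2} these are equivalences onto $\fgMod A$ and, through \Cref{lem:addquotrad} together with the identification $C\cong\End_{\z_P/[P]}(T_P)$, onto $\fgMod C$; they carry the minimal right $\add(-)$-approximation triangles defined by the ``module part'' $H_S^{-1}(M,0)$ of $P$ to minimal right approximations in the module category, and the autoequivalence $\langle1\rangle$ of \Cref{IY08.4.2} to the operation ``replace a module by the cokernel of a minimal $\add(M)$-cover'', so that the triangles occurring in Cases I(b) and I(c) reappear as the triangles defining $\varphi_P$. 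Which case applies is dictated by whether $X$ lies in $\Sigma\add(S)$ and, if not, whether $\T(S,X)\in\Gen M$, and this is mirrored on the reduced side by whether $\overline X$ is a shift in $\z_P/[P]$. Since Jasso's theorem \cite[Theorem~4.12(b)]{Jas15} already covers the sub-case $Q=0$, the genuinely new verifications are those involving the projective component $Q$, namely Cases I(c) and II(b). \textbf{The main obstacle is the bookkeeping in the ``in general'' gluing \eqref{eq:ingeneral}}: one must check that the Bongartz completion $\widehat M$ of $(M,0)$, the algebra $C$, and the projective $C$-module $Q'=\Hom_A(\widehat M,Q)$ used there are exactly the data produced on the triangulated side by reducing first along the module part of $P$ and then along (the image of) its $\Sigma\add(S)$-part, so that the two chains of maps in \eqref{eq:ingeneral} are identified termwise with an iterated silting reduction; here \Cref{lem:functorphi} and Jasso's identity $\varphi_QT_X\cong T_{\varphi_Q(X)}$ from \cite[Proposition~4.10(b)]{Jas15} furnish the needed compatibility between the two-step reduction and the one-step one.
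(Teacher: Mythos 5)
Your architecture is the same as the paper's: lower triangle from \Cref{lem:lowertri}, right triangle by definition, everything reduced to the identity $H'_S\circ\varphi_P=\mathcal{E}_{H_S(P)}\circ H_S$, checked on indecomposable complements of $P$ by running through the five Buan--Marsh cases with \Cref{lem:addquotrad} as the translation device. However, the step where you compute $H'_S(\overline{X})$ would fail as written. In \Cref{lem:lowertri} the second component of $H'_S(Y)$ is governed by the largest direct summand of $Y$ lying in $\add(T_P)\langle 1\rangle$ inside $\z_P/[P]$, \emph{not} by the largest summand of $X$ in $\Sigma\add(S)$ inside $\T$. The whole content of cases I(b), I(c), II(b) is that $\overline{X}\cong R_X\langle 1\rangle$ in the quotient, where $R_X$ comes from the triangle $R_X\to P_X\to X\to\Sigma R_X$ with a minimal right $\add(P)$-approximation; so whenever $\tfrac{\T}{[P]}(S,X)=0$ --- which by \Cref{lem:addquotrad} happens exactly when $\T(S,X)\in\Gen\T(S,P)$, in particular when $X\in\Sigma\add(S)$ --- one gets $H'_S\varphi_P(X)=\bigl(0,\tfrac{\T}{[P]}(S,R_X)\bigr)$, not the pair $\bigl(f_M\T(S,X),f_M\T(S,X_1)\bigr)$ you display (in case I(b) your formula outputs $(0,0)$). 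You do gesture at the correct mechanism afterwards (``whether $\overline{X}$ is a shift in $\z_P/[P]$''), but the identification you actually propose to feed into the case check is the wrong one.

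A second, related inaccuracy: \cite[Theorem~4.12(b)]{Jas15} covers only case I(a) (everything stays a module and the new summand is not in $\Gen M$); it does not cover the whole sub-case $Q=0$, since I(b) and I(c) produce a nonzero projective component and belong to Buan--Marsh's refinement. Your scoping is also internally inconsistent (you call I(c) genuinely new while claiming $Q=0$ is done), and it leaves I(b) unverified; with the corrected formula above, I(b), I(c), II(a), II(b) become the short checks carried out in the paper. Finally, your concern about the ``in general'' gluing \eqref{eq:ingeneral} is legitimate and the paper leaves it implicit; invoking \Cref{lem:functorphi} and $\varphi_QT_X\cong T_{\varphi_Q(X)}$ is a sensible way to discharge it, but it is bookkeeping rather than the place where the proof can break.
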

	\begin{proof}
		
		The commutativity of the lower triangle was shown in \Cref{lem:lowertri}, and the right triangle (with $\psi_{H_{ S}({ P})}$ along the diagonal) commutes by definition. What remains is proving that
		\begin{equation}\label{eq:Jas15.4.12}
		H'_{ S}\circ \varphi_{ P}({ X})= \mathcal{E}_{H_{ S}({ P})}\circ H_{ S}({ X}),
		\end{equation}
		which should hold for all ${ X}\in\npresilt{2}{ S}_{{ P}}(\c)$. All maps in question are constructed to distribute over direct sums, whence it suffices to consider the case where ${X}$ {is of the form $P\oplus X'$ for some indecomposable object $X'$ which is not in $\add(P)$.} We treat {five} cases, {and a general case}, corresponding to the definition of $\mathcal{E}_{H_{ S}({ P})}$ we reviewed in the discussion following \Cref{thm:BM18w.3.6}. {We note that $\varphi_P(X)=X'$ in $\z_P/[P]$.}

		\textbf{Case I:} Suppose that ${ P}$ has no {non-zero} direct summands in {$\Sigma { S}$}.
		
		\textbf{Case I(a)}. Suppose that ${ X'}\in\npresilt{2}{ S}_{ P}(\c)$ has no {non-zero} direct summand in $\Sigma{S}$ and $\c({ S},{ X'})\not\in\gen(\c({ S},{ P}))$.		
		Using the definitions of $H_S$ and $\mathcal{E}_{H_{ S}({ P})}$, the right hand side of \eqref{eq:Jas15.4.12} becomes
		\begin{equation*}
			\mathcal{E}_{H_{ S}({ P})}\circ H_{ S}({ X}) = \mathcal{E}_{H_{ S}({ P})}(\c(S,X),0) = \big({f_{\c(S,P)}}\c(S,X'),0\big).
			\end{equation*}
		{Using \Cref{lem:lowertri} and that $\varphi_P(X)=X'$ in $\z_P/[P]$, we re-express the left hand side of \eqref{eq:Jas15.4.12} as follows:}
		\begin{equation*}
			{H'_{ S}\circ \varphi_{ P}({ X}) = \big({f_{\c(S,P)}}\c(S, X'),{f_{\c(S,P)}}\c(S, X'_1\langle -1\rangle)\big)},
		\end{equation*}
		{where $X'_1$ is the largest direct summand of $X'$ in $T_P^+\langle 1\rangle$, considered as an object in $\z_P/[P]$.} 
		{It now suffices to show that ${f_{\c(S,P)}}\c(S, X'_1) = 0$, or indeed that ${f_{\c(S,P)}}\c(S, T_P^+\langle 1\rangle) = 0$. By \Cref{lem:addquotrad}, one can proceed by showing that ${\c \over [P]}(S, T_P^+\langle 1\rangle)=0$. By the definition of $\langle 1\rangle$, we have a triangle}
		\begin{equation}\label{eq:defof1}
		\begin{tikzcd}
		T_P^+  \arrow[r,"\alpha"] & P' \arrow[r] &  T_P^+\langle 1\rangle  \arrow[r] &  T_P^+[1],
		\end{tikzcd}
		\end{equation}
		{in which $\alpha$ is a left $\add(P)$-approximation. Applying the functor $\c(S,-)$ yields a long-exact sequence}
		\begin{equation*}
		\begin{tikzcd}
		\c(S,T_P^+)  \arrow[r,"\alpha\circ -"] & \c(S,P')  \arrow[r] &  \c(S,T_P^+\langle 1\rangle)  \arrow[r] &  \c(S,T_P^+[1])=0.
		\end{tikzcd}
		\end{equation*}
		{The vanishing of the last term proves that every morphism $S\to T_P^+\langle 1\rangle$ factors through $\add(P)$. We conclude that ${\c \over [P]}(S, T_P^+\langle 1\rangle)=0$, and in turn that \eqref{eq:Jas15.4.12} holds in this case.}

		\textbf{Case I(b):} When addressing the present case, as well as I(c) and II(b), we will use the {equality obtained in \eqref{eq:helps} below}. {In these three cases, we assume that $\c({ S},{ X})\in \gen(\c({ S},{ P}))$, or equivalently that $f_{\c({ S},{ P})}\c({ S},{ X})=0$. Under this assumption, we have $f_{\c({ S},{ P})}\c({ S},{X'})=0$, and since $\varphi_P(X)=X'$ in $\z_P/[P]$, it follows from \Cref{lem:lowertri} that} 
		\begin{equation}\label{eq:helps}
		{H'_{ S}\circ\varphi_{ P}({ X})  = {\left(f_{\c({S},{P})}\c({ S},{X'}),f_{\c({ S},{ P})} \c({ S},X'_1\langle -1\rangle)\right)} = (0,f_{\c({ S},{ P})}\c({ S},{X'_1\langle -1\rangle})),}
		 \end{equation}

		Suppose that ${ X'}\in\npresilt{2}{ S}_{ P}(\c)$ has no {non-zero} direct summands in $\Sigma{S}$, but  $\c({ S},{ X'})\in\gen(\c({ S},{ P}))$. {It follows from the definition of $\mathcal{E}_{H_{ S}(P)}$ that} $\mathcal{E}_{H_{ S}({ P})}H_{ S}({ X}) = (0,f_{\c({ S},{ P})}\c({ S},{ X'\langle -1\rangle}))$. Since \\ $\c({ S},{ X})\in\gen(\c({ S},{ P}))$, we have that ${f_{\c({ S},{ P})}}\c({ S},{ X})=0$. By {\eqref{eq:helps}} we have that $$H'_{ S}\circ\varphi_{ P}({ X}) = (0,f_{\c({ S},{ P})}\c({ S},{X'\langle -1\rangle})).$$ This shows that \eqref{eq:Jas15.4.12} holds.
		
		\textbf{Case I(c):} Suppose that ${X}={P\oplus}\Sigma {Q}$, where ${ Q}\in\ind\add({ S})$. Then $$\mathcal{E}_{H_{ S}({ P})}H_{ S}({ X})=(0,f_{\c({ S},{ P})}\c({ S},{{(\Sigma Q)\langle -1\rangle}})).$$ 
		{Since $\c({ S},{ X})=0\in\gen(\c(S,P))$, we have that \eqref{eq:Jas15.4.12} holds as a consequence of {\eqref{eq:helps}}}.
		
		\textbf{Case II:} Suppose that ${ P}\in\add({ \Sigma S})$. 
		
		\textbf{Case II(a):} If ${ X'}$ has no {non-zero} direct summands in {$\Sigma { S}$}, 
		{one uses \Cref{lem:lowertri} to assert that}
		\begin{equation*}
			H'_S\circ \varphi_P(X) =  \left(f_{\c({ S},{ P})}\c({ S},{X'}),f_{\c({ S},{ P})}\c({ S},X'_1\langle -1\rangle)\right).
		\end{equation*}
		{There are no non-zero morphisms from $S$ to $P$, whence $f_{\c(S,P)}$ is the identity functor, and we may simplify the right hand side above to $\left(\c({ S},{X'}),\c({ S},X'_1\langle -1\rangle)\right)$.} 
		
		{We show that the second component $\c({ S},X'_1\langle -1\rangle)$ vanishes. The Bongartz completion of $P$ is given by $T^+_P=P\oplus Q^+$, where $Q^+$ is not in $\add(\Sigma S)$. Since ${ P}\in\add({ \Sigma S})$, one uses the definition of $\langle 1\rangle$ (see \eqref{eq:defof1}) to show that $T^+_P\langle 1\rangle \simeq P \oplus Q^+[1]$ in $\z_P/[P]$. It is now clear that $X'\in \z_P/[P]$ does not have a direct summand in $T^+_P\langle 1\rangle$, so $X'_1=0$ and $\c({ S},X'_1\langle -1\rangle)=0$.}
		We deduce that
		\begin{equation*}
			H'_S\circ \varphi_P(X){= \big(\c({ S},{X'}),0\big)}.
		\end{equation*}
		{Since $\mathcal{E}_{H_{ S}({ P})}H_{ S}({ X}) = (\c(S,X'),0)$ in this case,} we have shown that \eqref{eq:Jas15.4.12} holds.
				
		\textbf{Case II(b):} If ${ X'}=\Sigma { R}$, where ${R}\in\ind\add({ S})$, then $\mathcal{E}_{H_{ S}({ P})}H_{ S}({ X})=(0,f_{\c({ S},{ Q})}(\c({ S},{(\Sigma R)\langle -1 \rangle})))$. {Since $\c({ S},{ X})=0$, we have that $f_{\c(S,P)}\c({ S},{ X})=0$.} Thus \eqref{eq:Jas15.4.12} holds by {\eqref{eq:helps}.}
		
		{\textbf{In general:} We can find a decomposition $P = P' \oplus \Sigma P''$, where $\Sigma P''$ is the largest direct summand of $P$ in $\Sigma S$. Let $(M,Q)=H_S(P)$. The Bongartz completion of $P$ is then of the form $T^+_P=\widehat{P'} \oplus \Sigma P''$, where $P'$ is a direct summand of $\widehat{P'} $. The goal is to show that the outer square in the following diagram commutes:}
		\begin{equation*}
			\begin{tikzcd}[column sep=0.8em]
\npresilt{2}{S}_P(\c) \arrow[rdd, "\varphi_{P'}"] \arrow[ddddd, "\varphi_{P}"] \arrow[rrr, "H_{S}"] \arrow[rrd, "\texttt{1}", phantom] \arrow[ddddd, "\texttt{2}", phantom, bend left] &                                                                                                                                                                                                                                    &                                                                                            & {\staurigidpair_{(M,Q)}({\Lambda})} \arrow[ld, "{\mathcal{E}_{(M,0)}}"] \arrow[ddddd, "{\mathcal{E}_{(M,Q)}}"] \\
                                                                                                                                                                                       &                                                                                                                                                                                                                                    & {\staurigidpair_{(0,{\widetilde{Q}})} J(M,0)} \arrow[d, "{F_{(M,0)}}"] \arrow[rd, "\texttt{4}", phantom] &                                                                                                        \\
                                                                                                                                                                                       & {\npresilt{2}{T^+_{P'}}_{\varphi_{P'}(\Sigma P'')}(\z_{P'}/[P'])} \arrow[lddd, "\varphi_{{\varphi_{P'}(\Sigma P'')}}"] \arrow[r, "H_{T^+_{P'}}"', bend right=20] \arrow[ru, "H''_S"] \arrow[ddd,phantom,"\texttt{5}",near start] \arrow[ru, "\texttt{3}", phantom, bend right=10] & {\staurigidpair_{(0,Q')}({\Gamma'})} \arrow[d, "{\mathcal{E}^{{\Gamma'}}_{(0,Q')}}"]                    & {}                                                                                                     \\
                                                                                                                                                                                       &                                                                                                                                                                                                                                    & {\staurigidpair J_{{\Gamma'}}(0,Q')} \arrow[d, "{F^{{\Gamma'}}_{(0,Q')}}"]                              &                                                                                                        \\
                                                                                                                                                                                       &                                                                                                                                                                                                                                    & \staurigidpair({\Gamma}) \arrow[rd, "{F_{(M,Q)}^{-1}}"]    \arrow[d,phantom,"\texttt{7}"]                                      &                                                                                                        \\
{\npresilt{2}{T^+_P}(\z_P/[P])} \arrow[rruu, "H'_{T^+_{{P'}}}", near end] \arrow[rruu, phantom,"\texttt{6}",bend right=10, near end]  \arrow[rrr, "H'_S"'] \arrow[rru, "H_{T^+_{\varphi_{P'}(\Sigma P'')}} = H_{T^+_P}" description, near end, bend right=5]                                                           &           {}                                                                                                                                                                                                                         &                                                       {}                                     & {\staurigidpair J(M,Q)}                                                                               
\end{tikzcd}
\end{equation*}
{There are seven subdiagrams above, labeled $\texttt{1}$ to $\texttt{7}$. We complete the proof by showing that all these subdiagrams commute.}
\begin{itemize}
	\item[\texttt{1}:] The upper square commutes, since one can apply Case I above to the $2_S$-term presilting object $P'$ in $\c$. The map $H''_S$ above is defined by $H'_S$ in \Cref{lem:lowertri}, but replacing $(M,Q)$ with $(M,0)$. {The $\tau$-rigid pair $(0,\widetilde{Q})$ is given by $\mathcal{E}_{M,0}(0,Q)$ (see Case I(c)).	}
	\item[\texttt{2}:] The left triangle commutes as a result of \Cref{lem:functorphi}.
	\item[\texttt{3}:] Applying \Cref{lem:lowertri}, but replacing $P$ with $P'$ and $(M,Q)$ by $(M,0)$, shows this triangle to be commutative.
	\item[\texttt{4}:] The hexagon on the right appears in \eqref{eq:ingeneral}, a diagram we have {defined} to be commutative. {Recall that $Q'$ is the projective $\Gamma'$-module $\Hom_{{\Lambda}}(M^b,Q)$. Moreover, we have an isomorphism of $k$-algebras $\Gamma'\simeq \End_{\z_{P'}/[P']}(T^+_{P'})$ and an isomorphism of ${\Gamma'}$-modules $Q'\simeq \Hom_{\z_{P'}/[P']}(T^+_{P'},P'')$.}
	\item[\texttt{5}:] The inner square commutes, since one can apply Case II above to the $2_{T_{P'}^+}$-term presilting object $\varphi_{P'}(\Sigma P'')$ in $\z_{P'}/[P']$. 
	{We have identified the bottom left corner ${\npresilt{2}{T^+_P}(\z_P/[P])}$ with ${\npresilt{2}{T^+_{\varphi_{P'}(\Sigma P'')}}(\z_{\varphi_{P'}(\Sigma P'')}/[\varphi_{P'}(\Sigma P'')])}$, where $T^+_{\varphi_{P'}(\Sigma P'')}\in \z_{\varphi_{P'}(\Sigma P'')}\subseteq \z_{P'}/[P']$, under the equivalence $\z_{\varphi_{P'}(\Sigma P'')}/[\varphi_{P'}(\Sigma P'')] \simeq \z_P/[P]$ (see \eqref{eq:functorphi2} in the proof of \Cref{lem:functorphi}).}
	\item[\texttt{6}:] {As we just explained, the ideal quotient $\z_P/[P]$, where $\z_{P}\subseteq \c$, is equivalent to $\z_{\varphi_{P'}(\Sigma P'')}/[\varphi_{P'}(\Sigma P'')]$, where $\z_{\varphi_{P'}(\Sigma P'')}\subseteq \z_{P'}/[P']$. As we showed in the proof of \Cref{lem:functorphi}, (see \eqref{eq:functorphispec}) this equivalence sends $T^+_{\varphi_{P'}(\Sigma P'')}$ to $T^+_P$.
	Although the map $H_{T^+_{\varphi_{P'}(\Sigma P'')}}$ formally has ${\npresilt{2}{T^+_{\varphi_{P'}(\Sigma P'')}}(\z_{\varphi_{P'}(\Sigma P'')}/[\varphi_{P'}(\Sigma P'')])}$ as its domain, we may replace the domain with ${\npresilt{2}{T^+_P}(\z_P/[P])}$ without altering the codomain or the expression of $H_{T^+_{\varphi_{P'}(\Sigma P'')}}$.
	The commutativity of $\texttt{6}$ now follows from the definition of $H'_{T^+_{P'}}$ in \Cref{lem:lowertri}, applied to the presilting object $\Sigma P''$ in $\z_{P'}/[P']$.}
	\item[\texttt{7}:] {Before the proof can be completed, we point out that $H_{T^+_{\varphi_{P'}(\Sigma P'')}}=H_{T^+_{P}}$ as maps from ${\npresilt{2}{T^+_P}(\z_P/[P])}$ to $\staurigidpair(C)$. Indeed, by \eqref{eq:functorphispec} in the proof of \Cref{lem:functorphi}, the equivalence $\z_{\varphi_{P'}(\Sigma P'')}/[\varphi_{P'}(\Sigma P'')] \simeq \z_P/[P]$ sends ${T^+_{\varphi_{P'}(\Sigma P'')}}$ to ${T^+_{P}}$, so these objects can be regarded as isomorphic in $\z_P/[P]$.}
	{The upper left map in \texttt{7} is thus $H_{T^+_P}$}, so \texttt{7} commutes thanks to the definition of $H'_S$ in \Cref{lem:lowertri}.
\end{itemize}
{The proof is complete.}
		  \end{proof}
	We have accomplished the task of linking the Buan--Marsh bijection to that of Iyama--Yang.
	
	\section{$\tau$-cluster morphism categories}\label{section:tcmc}
	
	We are now ready to give a generalisation of $\tau$-cluster morphism categories. In this section, let $A$ as in {\Cref{setup:siltingtstr}, i.e.} a non-positive dg $k$-algebra {such that $H^iA$ is finite dimensional for all $i\in \Z$}.
	
	Given a two-term presilting object $U\in \per(A)$, we proved in \Cref{lem:perptexsummand}\eqref{lem:perptexsummand2} that the perpendicular category $U^{\perp_{\Z}}_{{\rm fd}}$ is t-exact in $\Der_{\rm fd}(A)$. The $\tau$-cluster morphism category should keep track of this information. 
	{To encode the information we want,} there will be a morphism in the $\tau$-cluster morphism category of the form
	\begin{equation*}
	\begin{tikzcd}
	\Der_{\rm fd}(A)\arrow[r,"U"] & U^{\perp_{\Z}}_{{\rm fd}}.
	\end{tikzcd}
	\end{equation*} 
	{By \Cref{prop:largeloc}\eqref{prop:largelocte}, there is a dg $k$-algebra $C_U$ such that $\per(A)/\thick(P)\simeq \per(C_U)$.}
	 {Given $P\in \npresilt{2}{}(C_U)$, consider the functor}
	\begin{equation}\label{eq:piUP}
	{\pi_{U}\colon \per(A) \to  {\per(C_{U})}}
	\end{equation}
	{provided by \Cref{prop:largeloc}\eqref{prop:largeloccptpi}{, as well as the bijection $$\pi_{U}\colon \npresilt{2}{}_P(A) \to \npresilt{2}{}(C_{U}) $$ provided by \Cref{prop:PperpIsDfdcor}.}}
	The $\tau$-cluster morphism category will be constructed in such a way that it contains a morphism 
	\begin{equation*}
	\begin{tikzcd}
	U^{\perp_{\Z}}_{{\rm fd}}\arrow[r,"P"] & {\pi_{{U}}^{-1}(P)}^{\perp_{\Z}}_{{\rm fd}}.
	\end{tikzcd}
	\end{equation*}
	{Moreover,} it will contain a commutative diagram of the form
	\begin{equation*}
	\begin{tikzcd}[row sep=3em]
	\Der_{\rm fd}(A)\arrow[r,"U"]\arrow[rd] & U^{\perp_{\Z}}_{{\rm fd}}\arrow[d,"P"] \\
	& {(\pi_{{U}}^{-1}P)^{\perp_{\Z}}_{{\rm fd}}}
	\end{tikzcd}
	\end{equation*}
	
	{If $U$ and $V$ are different two-term presilting objects in $\per(A)$ such that $U^{\perp_{\Z}}_{{\rm fd}}$ and $V^{\perp_{\Z}}_{{\rm fd}}$ are the same t-exact subcategory of $\Der_{\rm fd}(A)$, it follows from \Cref{prop:largeloc}\eqref{prop:largeloccptpi} that there are triangle equivalences}
		\begin{equation}\label{eq:leftrighteqs}
		\begin{tikzcd}
			\per(C_U) & \arrow[l] \per(A)/\thick(P) \arrow[r]& \per(C_V),
		\end{tikzcd}
		\end{equation}
		{sending $A\in \per(A)/\thick(P)$ to $C_U \in \per(C_U)$ and $C_V\in \per(C_V)$, respectively. Consequently, there is a triangle equivalence $\beta_{V,U}\colon \per(C_U)\to\per(C_V)$ sending $C_U$ to $C_V$. It clearly induces a bijection}
		\begin{equation}\label{eq:beta}
		\begin{tikzcd}
			\beta_{V,U}\colon\npresilt{2}{C_U}(C_U) \arrow[r]& \npresilt{2}{C_V}(C_V).
		\end{tikzcd}
		\end{equation}
	
	\begin{definition}\label{def:tcmc}
		Let $A$ be {a non-positive dg $k$-algebra {such that $H^iA$ is finite dimensional for all $i\in \Z$}. The \textit{$\tau$-cluster morphism category} of $A$ is denoted by $\tcmc_A$, and is defined as follows:} the objects of $\tcmc_A$ are {(thick, by \Cref{lem:thicktex}) t-exact subcategories $\s$ of $\Der_{\rm fd}(A)$ such that $\s=U^{\perp_{\Z}}_{{\rm fd}}$ for some two-term presilting object $U$ in $\per(A)$. In this context, let $C_U$ be as in \Cref{prop:largeloc}\eqref{prop:largelocte}.} 		
		{Let $\s_1$ be an object in $\tcmc_A$. {Fix a two-term presilting object $U$ in $\per(A)$ such that  $\s_1=U^{\perp_{\Z}}_{{\rm fd}}$}. Then for each {two-term presilting object $P$ in $\per(C_U)$,} we add a morphism 
		\begin{equation}\label{eq:morphismP}
		\s_1 \to^{P} \s_2
		\end{equation} 
		provided that $\s_2= \big({\pi_U^{-1}}P\big)^{\perp_{\Z}}_{{\rm fd}}$, where $\pi_U$ is the bijection} 
		\begin{equation*}
			\pi_U \colon \npresilt{2}{}_P(A) \to  \npresilt{2}{}(C_U),
		\end{equation*}
		{induced by the functor $\pi_U\colon \per(A)\to \per(C_U)$, as provided by \Cref{prop:PperpIsDfdcor}.}
		
		{Had we fixed a two-term presilting object $V$ in $\per(A)$ such that $V^{\perp_{\Z}}_{{\rm fd}}=U^{\perp_{\Z}}_{{\rm fd}}$, we would have transferred along $\beta_{V,U}$ in \eqref{eq:beta} and added $\beta_{V,U}P$ instead. Since the functor $\beta_{V,U}$ is constructed to yield a natural isomorphism $\beta_{V,U}\pi_U=\pi_V$ of functors from $\per(A)$ to $\per(C_V)$, we have that $\s_2 = \big({\pi_U^{-1}}P\big)^{\perp_{\Z}}_{{\rm fd}} = \big({\pi_V^{-1}}(\beta_{V,U}P)\big)^{\perp_{\Z}}_{{\rm fd}}$, so we indeed get the morphism with the same domain and codomain. Keeping this transferral process in mind, the addition of the morphism $P$ displayed in \eqref{eq:morphismP} is independent of the choice of $U$. When working with a morphism as in \eqref{eq:morphismP} above, the choice of $U$ will be implicit.}
		
		Given two consecutive morphisms
		\begin{equation}\label{eq:tcmccomp}
		\begin{tikzcd}
		\s_1 \arrow[r,"P"]& \s_2\arrow[r,"Q"]& \s_3,
		\end{tikzcd}
		\end{equation}
		{where $P\in \npresilt{2}{}(C_U)$ for some $U\in\npresilt{2}{}(A)$, we may, thanks to the previous paragraph, assume that $Q\in \presilt(C_V)$, where $V$ is the element $\pi_U^{-1}(P)$ of $\npresilt{2}{}(C_U)$.}
		We define the composite morphism $Q\circ P$ to be $\pi_{P}^{-1}(Q)$, {where $\pi_P$ is the bijection}
		\begin{equation}\label{eq:tcmc_piP}
		 \npresilt{2}{}_P(C_U) \to^{\pi_P}  \npresilt{2}{}(C_{U,P}),
		 \end{equation}
	 {obtained by applying} \Cref{prop:PperpIsDfdcor} {to $C_U$ and $P$.}
	\end{definition}
	
	{We include a lemma to show that our composition rule is well-defined.}
	
	\begin{lemma}\label{lem:tcmcwelld}
	{Consider the diagram in \eqref{eq:tcmccomp}, where $\s_1=U^{\perp_{\Z}}_{{\rm fd}}$ {for some $U\in \npresilt{2}{}(A)$, and $Q\in\npresilt{2}{C_V}$, where $V$ is the element $\pi_U^{-1}(P)$ of $\npresilt{2}{}(C_U)$}. The following assertions hold.
	\begin{enumerate}
		\item\label{lem:tcmcwelld1} $\pi^{-1}_P(Q)\in \npresilt{2}{}_P(C_U)$,
		\item\label{lem:tcmcwelld3} $\s_3=\big(\pi_{U}^{-1}(Q\circ P) \big)^{\perp_{\Z}}_{{\rm fd}}$.
	\end{enumerate}}
	\end{lemma}
	\begin{proof}
	{The first assertion \eqref{lem:tcmcwelld1} follows directly from the existence of the bijection in \eqref{eq:tcmc_piP}.}

	{Secondly and finally, we prove the claim in \eqref{lem:tcmcwelld3}. By definition, we have that $\s_3 = (\pi_{V}^{-1}(Q))^{\perp_{\Z}}_{{\rm fd}}$. We show that $\big(\pi_{U}^{-1}(Q\circ P) \big)^{\perp_{\Z}}_{{\rm fd}}$ is equal to $\s_3$ as a t-exact subcategory of $\Der_{\mathrm{fd}}(A)$. Direct calculation shows the following:
	\begin{equation*}
	 \big(\pi_{U}^{-1}(Q\circ P) \big)^{\perp_{\Z}}_{{\rm fd}} = \big(\pi_{U}^{-1}\pi_{P}^{-1}(Q) \big)^{\perp_{\Z}}_{{\rm fd}} = \big(\pi_{\pi^{-1}_U(P)}^{-1}(Q) \big)^{\perp_{\Z}}_{{\rm fd}}  = \big(\pi_{V}^{-1}(Q) \big)^{\perp_{\Z}}_{{\rm fd}} = \s_3,
	\end{equation*}} 
	where we have used \Cref{lem:functorphi'} to get the second equality. The proof is complete. 
	\end{proof} 
	
	Having defined its objects, morphisms, and composition rule, we now check that $\tcmc_{A}$ is a category. It is clear that the two-term presilting object $P=0$ in $\npresilt{2}{}(C_U)$ is the identity morphism on ${U^{\perp_{\Z}}_{\rm fd}}$. However, it is not obvious that the composition rule is associative. Buan--Marsh show that their $\tau$-cluster morphism category has an associative composition rule  \cite[Corollary~1.8]{BM18w} by considering several cases.
	Igusa--Todorov's treatment of the representation finite hereditary case makes use of root systems \cite[Section~1]{IT17}.
	Our proof takes advantage of the functoriality of silting reduction.
	
	\begin{theorem}\label{thm:associative}
		The composition rule for $\tcmc_{A}$ is associative. Thus $\tcmc_{A}$ is a category.
	\end{theorem}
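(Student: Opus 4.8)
The plan is to reduce associativity to a statement about the bijections $\sigma_P$ from \Cref{thm:siltredt}, mirroring the proof of \Cref{lem:functorphi} but using the fact that these bijections are induced by truncation functors. Consider a composable chain
\begin{equation*}
(\s_1,W_1)\xrightarrow{P}(\s_2,W_2)\xrightarrow{Q}(\s_3,W_3)\xrightarrow{R}(\s_4,W_4).
\end{equation*}
Both $(P\circ Q)\circ R$ and $P\circ(Q\circ R)$ are by definition $2_{W_1}$-term partial silting objects of $\s_1$, so to show they agree it suffices to show that they have the same image under the composite of the relevant $\sigma$-bijections down to $\s_4$, i.e. that everything is controlled by a single ``global'' reduction $\sigma_{P'}$, where $P'$ is the lift of $R\circ$(everything) to $\s_1$. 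The key identity needed is a functoriality statement analogous to \Cref{lem:functorphi}: if $P\oplus Q'$ is a $2_{W_1}$-term presilting object of $\s_1$ (with $Q'=\sigma_P^{-1}(Q)$ in the notation of \Cref{lem:tcmcwelld}), then
\begin{equation*}
\sigma_Q\circ(\sigma_P\vert_{\npresilt{2}{W_1}_{P\oplus Q'}(\s_1)}) = \sigma_{P\oplus Q'},
\end{equation*}
as bijections $\npresilt{2}{W_1}_{P\oplus Q'}(\s_1)\to\npresilt{2}{W_3}(\s_3)$, where $\s_3=(P\oplus Q')^{\perp_\Z}$ by \Cref{lem:tcmcwelld}\eqref{lem:tcmcwelld1}.

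First I would establish this functoriality identity. By \Cref{thm:siltredt}, $\sigma_P$ is induced by $\sigma_P^{>0}$, $\sigma_Q$ by $\sigma_Q^{>0}$, and $\sigma_{P\oplus Q'}$ by $\sigma_{P\oplus Q'}^{>0}$. By \Cref{lem:tcmcwelld}\eqref{lem:tcmcwelld2} we have a natural isomorphism $\sigma_{P\oplus Q'}^{>0}\cong \sigma_Q^{>0}\circ\sigma_P^{>0}$ (applying that lemma to the pair $P,Q'$ rather than $P,Q$, noting $\sigma_P(Q')=Q$). Since each of the three bijections is the map induced on $2$-term presilting objects by the respective truncation functor, and the relevant functors compose up to natural isomorphism, the induced bijections compose accordingly; one just has to check that the domains and codomains match, which follows from \Cref{lem:tcmcwelld}\eqref{lem:tcmcwelld1} and the fact that Bongartz completions are respected (via $\sigma_P T_{W_1,P\oplus Q'}\cong T_{W_2,Q}$, which can be extracted from Jasso's formula as in the proof of \Cref{lem:functorphi}, or directly from \Cref{thm:siltredt} applied iteratively).

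With the functoriality identity in hand, associativity follows formally. Writing $P'\in\npresilt{2}{W_1}_P(\s_1)$ for the lift of the various composites, one computes that $\sigma_P$ sends $(P\circ Q)\circ R$ and $P\circ(Q\circ R)$ to the same element of $\npresilt{2}{W_2}(\s_2)$ by applying the identity twice (once to relate $\sigma_{Q\oplus R'}$ to $\sigma_R\circ\sigma_Q$ in $\s_2$, once to relate $\sigma_{P\oplus(Q\oplus R')'}$ to the two-step reduction in $\s_1$), and similarly for the other bracketing; since $\sigma_P$ is injective, the two composites coincide. Concretely: both $(P\circ Q)\circ R$ and $P\circ(Q\circ R)$ are the unique preimage in $\npresilt{2}{W_1}_{P}(\s_1)$ under $\sigma_P$ of the unique preimage in $\npresilt{2}{W_2}_Q(\s_2)$ under $\sigma_Q$ of $R$, because the functoriality identity guarantees that the three-fold reduction $\npresilt{2}{W_1}(\s_1)\dashrightarrow\npresilt{2}{W_4}(\s_4)$ factors through the composable-pair reductions in either order.

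The main obstacle I expect is bookkeeping around the Bongartz completions: one must verify that the object $W_i$ appearing in $(\s_i,W_i)$ is consistently the truncation of the Bongartz completion computed inside the correct ambient category, and that these are compatible under reduction — i.e. that taking a Bongartz completion in $\s_1$ and then reducing agrees with reducing and then taking a Bongartz completion in $\s_2$. This is the content behind the assertion ``$\sigma_P T_{W_1,P\oplus Q'}\cong T_{W_2,Q}$'' above; it is the analogue of the step in \Cref{lem:functorphi} invoking \cite[Proposition~4.10(b)]{Jas15}, and I would either cite that result through the dictionary of \Cref{thm:siltredt} or reprove it directly using that Bongartz completion is the maximal silting object containing the given presilting summand (with respect to $\geq$) together with the fact that $\sigma_P^{>0}$ is an equivalence of the relevant subquotients and hence preserves the silting order. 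Once this compatibility is nailed down, the rest is the purely formal manipulation of bijections sketched above.
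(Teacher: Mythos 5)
Your proposal is correct and follows essentially the same route as the paper: both reduce associativity to the identity $\sigma_{Q\circ P}=\sigma_Q\circ\sigma_P$ (equivalently $\sigma_P^{-1}\circ\sigma_Q^{-1}=\sigma_{\sigma_P^{-1}(Q)}^{-1}$), justified by \Cref{lem:tcmcwelld}\eqref{lem:tcmcwelld2} together with the fact that the bijections of \Cref{thm:siltredt} are induced by the truncation functors, after which the computation is formal. The Bongartz-compatibility bookkeeping you flag is handled in the paper at the level of well-definedness of the composition (\Cref{lem:tcmcwelld}) rather than inside the associativity argument, so it does not change the substance.
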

	\begin{proof}
		Given three composable morphisms 
		\begin{equation*}
		\begin{tikzcd}
		\s_1\arrow[r,"P"]& \s_2\arrow[r,"Q"]& \s_3 \arrow[r,"R"]& \s_4
		\end{tikzcd}
		\end{equation*}
		we verify the identity
		\begin{equation*}
		R\circ (Q\circ P) = (R\circ Q)\circ P.
		\end{equation*}
		The bijection in \eqref{eq:tcmc_piP} is induced by the functor $\pi_P$ (and similarly for $\pi_Q$), whence we may apply \Cref{lem:functorphi'} when expanding the right hand side.
		\begin{align*}
		(R\circ Q)\circ P &= (\pi_Q^{-1}(R))\circ P \\
		&= \pi_P^{-1}(\pi_Q^{-1}(R)) \\
		&= \pi^{-1}_{\pi_{ P}^{-1}(Q)}(R) \\
		&= \pi^{-1}_{Q\circ P}(R) \\
		&= R\circ(Q\circ P).
		\end{align*}
		The proof is complete.
	\end{proof}
	
	
	Our main aim of this section is to show that the category defined above generalises the $\tau$-cluster morphism category of Buan--Marsh (and Buan--Hanson), which we denote by $\tcmc^{\rm BM}_{{\Lambda}}$, where ${\Lambda}$ is a finite dimensional $k$-algebra. For a $\tau$-tilting finite algebra ${\Lambda}$, recall that the objects of $\tcmc^{\rm BM}_{{\Lambda}}$ are the wide subcategories of $\fgMod B$. For arbitrary finite dimensional $k$-algebras, its objects are the $\tau$-perpendicular wide subcategories, namely those that are of the form $J(M,{R})$ for some {$\tau$-rigid pair} $(M,{R})$ in $\fgMod {\Lambda}$ \cite{BH21}. There is a morphism 
	\begin{tikzcd}
		W_1\arrow[r,"{(M,{R})}"]& W_2 
	\end{tikzcd}
	if $(M,{R})$ is a $\tau$-rigid pair in $W_1$ such that $J_{W_1}(M,Q)=W_2$, where $J_{W_1}(M,{R})$ is the $\tau$-perpendicular category of $(M,{R})$ in $W_1$. 
	{If}
	\begin{equation}\label{eq:FMQbij'}
		{F_{(M,R)}}\colon \staurigidpair(W_1) \to \staurigidpair({\Gamma_1})
	\end{equation}
	{is the bijection in \eqref{eq:FMQbij}, and}
	\begin{equation*}
		\mathcal{E}_{(M,{R})}\colon \staurigidpair_{(F_{(M,R)}M,F_{(M,R)}R)}({\Gamma_1}) \to \staurigidpair(W_2)
	\end{equation*}
	 {is the bijection in \Cref{thm:BM18w.3.6}, then}
	 the composite of
	\begin{equation*}
	\begin{tikzcd}
	W_1 \arrow[r, "{(M,{R})}"] & W_2 \arrow[r, "{(M',{R}')}"] & W_3
	\end{tikzcd}
	\end{equation*}
	is defined by ${F^{-1}_{(M,R)}\circ}\mathcal{E}_{(M,{R})}^{-1}(M',{R}'){\in \staurigidpair(W_1)}$.
	
	Note that our approach is independent of that of Buan--Marsh and Buan--Hanson.
	Hence, it gives an alternative approach to defining $\tau$-cluster morphism categories for all finite dimensional $k$-algebras, and also for non-positive dg $k$-algebras {with finite dimensional cohomology in all degrees}.
	
	\begin{theorem}\label{thm:sameasBM}
		Let $A$ be {a non-positive dg $k$-algebra such that $H^iA$ is finite dimensional for all $i\in \Z$.} 
		{There is an} equivalence of categories
		\begin{equation}\label{eq:sameasBMfunctor}
		{E\colon}\tcmc_{A}\to \tcmc^{\rm BM}_{H^0A}.
		\end{equation}
		{Explicitly, this equivalence sends the object $\s\in \tcmc_A$ to $\s \cap \fgMod H^0 A \in \tcmc^{\rm BM}_{H^0A}$ (see \Cref{lem:plus_rev}\eqref{lem:plus_rev1}) and the map $\s_1\to^{P} \s_2$ to $F^{-1}_{H_{C_U}(P)}\circ H_{C_U}(P)$ (see \Cref{IJY14.4.6} and \eqref{eq:FMQbij'}), where $U$ is a two-term presilting object in $\per(A)$ such that $U^{\perp_{\Z}}_{{\rm fd}} = \s_1$ and $C_U$ is the non-positive dg $k$-algebra defined in \Cref{prop:largeloc}\eqref{prop:largelocte}.}
		In particular, if $A$ is a finite dimensional $k$-algebra, then $\tcmc_{A}$ is equivalent to $\tcmc_{A}^{\rm BM}$.
	\end{theorem}
	\begin{proof}
		The morphism set $\tcmc_{A}(\s_1,\s_2)$ is a subset of $\npresilt{2}{}(C_U)$. {On the other hand,} the morphism set $\tcmc^{\rm BM}_{H^0 A}(W_1,W_2)$ is a subset of $\staurigidpair(W_1)$, where $W_i$ is the wide subcategory $\s_i \cap \fgMod H^0 A$ of $\fgMod H^0A$, for $i\in\{1,2\}$. Using \Cref{IJY14.4.6} {and \eqref{eq:FMQbij'}, we define a map } 
		\begin{equation}\label{eq:sameasBM}
		\begin{tikzcd}[column sep=3.5em]
		\tcmc_{A}(\s_1,\s_2)\arrow[r,"{H_{C_U}}"] &  {\staurigidpair_{H_{{C_U}}({ P})}({H^0C_U})} \arrow[r,"{F^{-1}_{(M,R)}}"] &  {\staurigidpair_{(M,R)}({W_1})}.
		\end{tikzcd}
		\end{equation}
		It follows from \Cref{prop:H0J}\eqref{item:H0J1} that the $\tau$-perpendicular category of {$H_{C_U}(P)$} in $W_1$ is $W_2$. {Consequently, the image of this map is a subset of $\tcmc^{\rm BM}_{H^0A}(W_1,W_2)$, so} a map of Hom-sets has thus been constructed.
		
		It is easy to check that {the functor $E$} sends identity maps to identity maps; they are given by trivial presilting objects and trivial $\tau$-rigid pairs. It should also be shown that composition is respected. Let $P$ and $Q$ be a pair of composable morphisms in $\tcmc_A$, as shown in \eqref{eq:tcmccomp}. Their composite is then given by ${V=}\pi_{ P}^{-1}(Q)$. {Let $(M,R)$ denote the $\tau$-rigid pair ${H_{{C_U}}}({ P})$ in $\fgMod H^0C_U$}. Consider the following diagram of bijections:
		\begin{equation}\label{eq:sameasBMsq}
		\begin{tikzcd}[column sep=3em,row sep=2em]
\npresilt{2}{}_{{ P}}({C_U}) \arrow[rd, "{\varphi}_{ P}"] \arrow[rr, "H_{C_U}"] \arrow[dd, "\pi_{P}"'] \arrow[rrrdd, "\eqref{th:Jas15.4.12}", phantom, bend left=5] \arrow[dd, "\eqref{prop:PperpIsDfdcor}", phantom, bend left=70] &                                                                                                                                                                                                     & \staurigidpair_{H_{{C_U}}({ P})}({H^0C_U}) \arrow[rdd, "\mathcal{E}_{H_{{C_U}}(P)}" description] \arrow[r, "{F^{-1}_{(M,R)}}"] & {\staurigidpair_{(M,R)}({W_1})} \arrow[dd, "{\mathcal{E}_{H_{{C_U}}(P)}\circ F_{(M,R)}}"] \\
                                                                                                                                                                                                 & {\npresilt{2}{{T^+_P}}({\z_P/[P]})} \arrow[rrd, "H'_{C_U}"] \arrow[ld,"\zeta_P" description] \arrow[d, "H_{T^+_P}"] \arrow[rrd, "\eqref{lem:lowertri}", phantom, near start, bend right=10] &                                                                                                                                &                                                                                           \\
\npresilt{2}{}(C_{V}) \arrow[r, "H_{C_{V}}"']                                                                                                                                                    & \staurigidpair({H^0C_{V}}) \arrow[rr, "{F^{-1}_{(M',R')}}"']                                                                                                                                        &                                                                                                                                & \staurigidpair(W_2)                                                                      
\end{tikzcd}
		\end{equation}
		{where we have referred to previous results to address the commutativity of the not-so-obviously commutative subdiagrams (recall that $\zeta_P$ in \Cref{prop:PperpIsDfdcor} is induced by a triangle equivalence sending $T^+_P$ to $C_V$). We have defined the morphism $E(Q\circ P)=E(\pi_{P}^{-1}(Q))$ in $\tcmc^{\rm BM}_{H^0A}(W_1,W_2)$ as the image of $\pi_{P}^{-1}(Q)$ along the top row. By the commutativity of the diagram:}
		$$ {E(Q\circ P) = F_{(M,R)}^{-1}\circ \mathcal{E}_{H_{{C_U}}(P)}^{-1} {F^{-1}_{(M',R')}} \circ H_{C_V}(Q) = F_{(M,R)}^{-1}\circ \mathcal{E}_{H_{{C_U}}(P)}^{-1}(E(Q))  = E(Q)\circ E(P), }$$
		{showing that composition is respected.} We have thus defined a functor from $\tcmc_A$ to $\tcmc_{H^0A}^{\rm BM}$.
		
		{Since the functor $E$ in \eqref{eq:sameasBMfunctor} is a bijection as a map of objects, as was shown in \Cref{prop:H0J}{\eqref{item:H0J2}}, it is essentially surjective. We conclude the proof by showing that $E$ is fully faithful.
		It is to be shown that the maps in \eqref{eq:sameasBM} inject for all $\s_1,\s_2\in \tcmc_A$, and that the image is $\tcmc^{\rm BM}_{H^0A}(W_1,W_2)$.  
		Since the maps of the form \eqref{eq:sameasBM} are restrictions of bijections, they inject, whence $E$ is faithful.
		To show that $E$ full, we fix an element $(M,R) \in \tcmc^{\mathrm{BM}}_{H^0}(W_1,W_2)$ and show that it admits a preimage. Now, we have that $H_{C_U}^{-1}(M,R)\in \npresilt{2}{}(C_U)$.
		Since $W_2$ is the $\tau$-perpendicular category of $(M,R)$ in $W_1$, it follows form \Cref{prop:H0J}\eqref{item:H0J1} that $E$ sends $H_{C_U}^{-1}(M,R)^{\perp_{\Z}}_{{\rm fd}}$ to $W_2$.
		This puts $H_{C_U}^{-1}(M,R)\in \tcmc_{A}(\s_1,\s_2)$, and this morphism in $\tcmc_A$ is sent to $(M,R) \in \tcmc^{\mathrm{BM}}_{H^0}(W_1,W_2)$ by the map in \eqref{eq:sameasBM}.
		This shows that $E$ is full, and in turn that it is fully faithful.}
	\end{proof}

	\bibliographystyle{IEEEtranSA}
	\bibliography{tcmc}	
\end{document}